\numberwithin{equation}{section}
\renewcommand{\thetheoremName}
\newcommand{\IB}{\mathbb{B}}
\newcommand{\IC}{\mathbb{C}}
\newcommand{\IF}{\mathbb{F}}
\newcommand{\IN}{\mathbb{N}}
\newcommand{\IP}{\mathbb{P}}
\newcommand{\IQ}{\mathbb{Q}}
\newcommand{\IR}{\mathbb{R}}
\newcommand{\IZ}{\mathbb{Z}}
\newcommand{\calC}{\mathcal{C}}
\newcommand{\calH}{\mathcal{H}}
\newcommand{\calO}{\mathcal{O}}
\newcommand{\calR}{\mathcal{R}}
\newcommand{\ip}{\mathfrak{p}}
\def\Vol{\mathrm{Vol}}
\def\GL{\mathrm{GL}}
\def\rad{\mathrm{rad}}
\def\SL{\mathrm{SL}}
\DeclareMathOperator\ord{ord}
\DeclareMathOperator{\pr}{proj}
\DeclareMathOperator{\argmin}{argmin}
\DeclareMathOperator{\Min}{min}
\DeclareMathOperator{\T}{T}
\DeclareMathOperator{\N}{N}
\DeclareMathOperator{\tr}{tr}
\DeclareMathOperator{\nr}{nr}
\newtheorem{theorem}{Theorem}[section]
\newtheorem*{theorem*}{Theorem}
\newtheorem*{corollary*}{Corollary}
\newtheorem{defn}[theorem]{Definition}
\newtheorem{lemma}[theorem]{Lemma}
\newtheorem{prop}[theorem]{Proposition}
\newtheorem{corollary}[theorem]{Corollary}
\theoremstyle{definition}
\newtheorem{definition}[theorem]{Definition}
\theoremstyle{remark}
\newtheorem{example}[theorem]{Example}
\newtheorem{remark}[theorem]{Remark}
\newtheorem{remarks}[theorem]{Remarks}
\thanks{This work was partially funded by the Swiss National Science Foundation (SNSF)}
\begin{document}
\title{Dense packings via lifts of codes to division rings}
\author{Nihar Gargava and Vlad Serban}

\bibliographystyle{alpha}

\begin{abstract}
We obtain algorithmically effective versions of the dense lattice sphere packings constructed from orders in $\IQ$-division rings by the first author. The lattices in question are lifts of suitable codes from prime characteristic to orders $\calO$ in $\IQ$-division rings and we prove a Minkowski--Hlawka type result for such lifts. Exploiting the additional symmetries under finite subgroups of units in $\calO$, we show this leads to effective constructions of lattices approaching the best known lower bounds on the packing density $\Delta_n$ in a variety of new dimensions $n$. This unifies and extends a number of previous constructions.  
\end{abstract}

\maketitle
\section{Introduction}
The sphere packing problem in $\mathbb{R}^n$ is concerned with maximizing the proportion of Euclidean space covered by a set of balls of equal radius and disjoint interiors. We will mostly be concerned with the \emph{lattice} sphere packing problem, where the balls are required to be centered at points on an $n$-dimensional lattice $\Lambda$. The proportion achieved by a particular lattice, called the packing density of $\Lambda$, is then given by 
$$\Delta(\Lambda):=\frac{\Vol(\IB_n(\lambda_1(\Lambda)))}{2^n\Vol(\Lambda)},$$
where $\lambda_1(\Lambda)$ denotes the shortest vector length in $\Lambda$,
$\IB_n(r)$ a ball of radius $r$ and $\Vol(\Lambda)$ denotes the covolume of the lattice. We also denote by $\Delta_n$ the supremum of lattice packing densities that can be achieved in $n$ dimensions. Its value is only known in a handful of dimensions, see for instance the summary \cite[1.5.]{Splag} as well as \cite{CohnKumar}. The density is achieved by highly symmetric lattices such as root lattices or the Leech lattice. Owing to highly celebrated results \cite{Hales2005APO,Marynadim8,Marynadim24}, some of these are even known to solve the general sphere packing problem. For arbitrary dimensions, the best known upper and lower bounds on $\Delta_n$ are however exponentially far apart as $n$ increases (see e.g., the survey article \cite{Cohn2016ACB} for more background). In this article we shall be concerned with lower bounds as $n$ increases and with giving effective constructions of lattices approaching these bounds.\par 
The classical Minkowski--Hlawka theorem \cite{Hlawka1943} states that $\Delta_n\geq 2\frac{\zeta(n)}{2^n}$, a bound which Rogers \cite{RogersExistence(Annals)} later improved by a linear factor to $\Delta_n\geq \frac{cn}{2^n}$ for $c=2/e\approx 0.74$. The constant was subsequently sharpened to $c=1.68$ by Davenport-Rogers \cite{DavenportRogers} and $c=2$ by Ball \cite{Ballbound} for all $n$. More recently, Vance \cite{VanceImprovedBounds} showed using lattices which are modules over the Hurwitz integers that one may take $c=24/e\approx 8.83$ and Venkatesh \cite{VenkateshBounds} showed that for $n$ large enough one may take $c=65963$. Moreover, by considering lattices from maximal orders in cyclotomic fields, Venkatesh was able to achieve for infinitely many dimensions the improvement $\Delta_n\geq \frac{n\log\log n}{2^{n+1}}$. The first author \cite{gargava2021lattice} then extended such results to lattices coming from orders $\calO$ in arbitrary $\IQ$-division algebras. This was achieved by proving a Siegel mean value theorem (see \cite{SiegelMVT,gargava2021lattice}) in this setting and exploiting the additional symmetries of the lattices under the group of finite order units in $\calO^\times$ to obtain dense packings. In particular, new sequences of dimensions such that $\Delta_n\geq \frac{c_1\cdot n\log\log(n)^{c_2}}{2^n}$ for constant $c_1,c_2>0$ are uncovered.

Lattices provide an important tool for coding, for instance for the Additive White Gaussian Noise (AWGN) model. For such applications it is often desirable to have lattices that are ``good'' in the sense of achieving high packing densities (see e.g., \cite{Loeliger97averagingbounds}). The sphere packing problem is moreover crucially connected to optimizing parameters of codes and energy optimization (see e.g.,\cite{Splag,UniversalOpt}). However, despite having Minkowski's lower bound for over a century, producing explicit families of lattices that achieve these asymptotic bounds in less than astronomical running time has proved elusive. Currently known polynomial time algorithms produce lattices whose densities are exponentially worse than these bounds \cite{litsyn1987constructive}. In this paper, we make the currently best known existential results for $\IQ$-division algebras effective by exhibiting finite sets of lattices which for large enough dimension must contain a lattice approaching the non-constructive lower bounds stated above. \par

Indeed, for orders $\calO$ in a $\IQ$-division algebra, we consider for suitable primes $p$ and for $t\geq 2$ the reduction map $\phi_p:\calO^t\to (\calO/p\calO)^t$ and may identify the quotient with a product of matrix rings over a finite field $\IF_q$. The sets of lattices $\mathbb{L}_p$ we consider are then re-scaled pre-images via $\phi_p$ of codes in $(\calO/p\calO)^t$ of a certain fixed $\IF_q$-dimension. Our first main result, Theorem \ref{thm:specificaverage}, is in effect a Siegel mean value theorem for these sets of lifts of codes valid for general finite dimensional $\IQ$-division algebras. We refer the reader to Section \ref{sec:three} for detailed statements, whereas some useful preliminary results on lattices from division algebras are established in Section \ref{sec:two}.\par 
In Section \ref{sec:four}, the extra symmetries of these lattices under finite subgroups of $\calO^\times$ is exploited to obtain (see Theorem \ref{thm:improvedbounds}): 
\begin{theorem}\label{thm:mainintro}
Let $A$ be a central simple division algebra over a number field $K$ with ring of integers $\calO_K$. Let $\calO$ be an $\calO_K$-order in $A$. Let $n^2=[A:K]$, $m=[K:\IQ]$ and let $t\geq 2$ be a positive integer. Let $G_0$ be a fixed finite subgroup of $\calO^\times$. Then there exists a lattice $\Lambda$ in dimension $n^2mt$ achieving 
$$\Delta(\Lambda)\geq\frac{\vert G_0\vert\zeta(mn^2t)\cdot t}{2^{mn^2t}\cdot e(1-e^{-t})}.$$
Moreover, there exists for any $\varepsilon>0$ an $\calO$-lattice $\Lambda_\varepsilon$ in dimension $n^2mt$ of packing density $$\Delta(\Lambda_\varepsilon)\geq(1-\varepsilon)\cdot \frac{\vert G_0\vert\zeta(mn^2t)\cdot t}{2^{mn^2t}\cdot e(1-e^{-t})}$$
which can be constructed. Indeed, $\Lambda_\varepsilon$ is obtained by applying Proposition \ref{prop:productminima} to a suitable sublattice of $\calO^t$ obtained as a pre-image via reduction modulo primes $\ip$ of $\calO_K$ of large enough norm of a code. The code in question is isomorphic to $k$ copies of simple left $\calO/\ip \calO$-modules for some $nt-t<k<nt$.
\end{theorem}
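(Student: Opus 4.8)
The plan is to combine a Minkowski--Hlawka type average over lifts of codes (Theorem~\ref{thm:specificaverage}) with the symmetry-weighted counting argument that exploits the free action of a finite subgroup $G_0\subseteq\calO^\times$ on short vectors. First I would set up the relevant family of lattices: fix a prime $\ip$ of $\calO_K$ of large norm $q$, identify $\calO/\ip\calO$ with a matrix ring $M_n(\IF_q)$ (or a product thereof, in the non-split case), and consider the set $\IL_\ip$ of re-scaled preimages under $\phi_\ip:\calO^t\to(\calO/\ip\calO)^t$ of left submodules $C\subseteq(\calO/\ip\calO)^t$ isomorphic to $k$ copies of a simple module, where $nt-t<k<nt$. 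The scaling is chosen so that each such lattice has a fixed covolume; this is where the freedom in $k$ enters, since the index of the preimage in $\calO^t$ is $q^{(nt-k)\cdot n}$ (up to the appropriate matrix-ring bookkeeping), and one tunes $k$ against $q$ to hit the target density.

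The core step is the averaging estimate. Using Theorem~\ref{thm:specificaverage}, the expected number of nonzero vectors of $\Lambda\in\IL_\ip$ lying in a bounded symmetric set $S$ (after rescaling, a ball of the appropriate radius) tends, as $q\to\infty$, to $\mathrm{Vol}(S)/\mathrm{covol}$ times a correction factor accounting for the proportion of $\calO^t$ that survives in a random code of the prescribed type; the $\zeta(mn^2t)$ factor comes from the standard Minkowski--Hlawka manoeuvre of restricting to primitive vectors, and the factor $t/(e(1-e^{-t}))$ arises from estimating the probability that a fixed primitive vector of $\calO^t$ lies in a uniformly random $k$-fold-simple submodule — this is a Gaussian-binomial / linear-algebra computation over $\IF_q$ that in the limit $q\to\infty$ produces $\frac{1}{e}\cdot\frac{t}{1-e^{-t}}$ when $k$ is chosen appropriately relative to $nt$. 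Then I invoke the $G_0$-symmetry: every lattice in the family is an $\calO$-module, hence stable under left multiplication by $G_0$, and $G_0$ acts freely on the set of nonzero lattice vectors (since $\calO$ is a domain), so short vectors come in $G_0$-orbits of size exactly $\vert G_0\vert$. Therefore the \emph{weighted} average, counting each orbit once, is the naive average divided by $\vert G_0\vert$; equivalently, one may run the whole argument with balls of radius inflated so that the expected number of \emph{orbits} of nonzero vectors is just below $1$, which gains the factor $\vert G_0\vert$ in the density. A pigeonhole/averaging argument then yields one lattice $\Lambda$ in the family with no nonzero vector shorter than the target $\lambda_1$, giving the stated bound on $\Delta(\Lambda)$.

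For the effective statement, the point is that $\IL_\ip$ is a \emph{finite explicit} set once $\ip$ is fixed (finitely many submodules of $(\calO/\ip\calO)^t$ of the prescribed isomorphism type), so the existential pigeonhole becomes a finite search: enumerate the codes, compute shortest vectors of the corresponding lattices, and output the best one. The convergence of the average to the claimed limit as $\N\ip\to\infty$ means that for $\N\ip$ large enough the best lattice in $\IL_\ip$ has density at least $(1-\varepsilon)$ times the bound; fixing such a $\ip$ makes the construction terminate. Finally, to pass from ``a sublattice of $\calO^t$'' to ``a lattice in $\IR^{n^2mt}$'' with a \emph{genuine} density bound rather than a mean-value statement, I apply Proposition~\ref{prop:productminima} to replace the lattice by a suitable sublattice whose successive minima are controlled, so that the shortest-vector length is actually realized — this is the step that turns the first-moment estimate into a pointwise density guarantee for $\Lambda_\varepsilon$.

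The main obstacle I anticipate is the combinatorial heart of the averaging step: controlling, uniformly as $q\to\infty$, the probability that a fixed primitive element of $\calO^t$ lands in a random submodule of $(\calO/\ip\calO)^t$ of the prescribed isomorphism type, and checking that the optimal choice of $k$ in the window $(nt-t,\,nt)$ produces exactly the constant $t/(e(1-e^{-t}))$. This requires care with the non-commutative structure (simple modules over $M_n(\IF_q)$, multiplicities, and how primitivity in the $\calO$-lattice sense interacts with reduction mod $\ip$), and it is the place where Theorem~\ref{thm:specificaverage} must be applied with its hypotheses verified precisely rather than invoked as a black box. A secondary technical point is ensuring the $G_0$-action remains free after the rescaling and after passing to the sublattice produced by Proposition~\ref{prop:productminima}, which should follow from $G_0\subseteq\calO^\times$ acting $\calO$-linearly, but needs to be stated.
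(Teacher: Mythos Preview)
Your overall architecture is right, but there is a genuine gap: you misidentify the source of the factor $t/(e(1-e^{-t}))$. You claim it comes from the Gaussian-binomial count over $\IF_q$ of how often a fixed primitive vector lands in a random submodule of the prescribed type. It does not. Theorem~\ref{thm:specificaverage} gives, in the limit $q\to\infty$, simply $(\zeta(mn^2t)\cdot V)^{-1}\int f$; the code combinatorics only guarantee this limit exists (via $U$-balancedness and non-degeneracy) and contribute no extra $t$-dependent constant. If you feed the indicator of a ball into this and run the $G_0$-orbit pigeonhole you sketch, you recover only $\Delta\geq |G_0|\zeta(mn^2t)/2^{mn^2t}$ (this is exactly Proposition~\ref{prop:simpleimprovedbounds}), missing the factor $t/(e(1-e^{-t}))$ entirely.

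In the paper that factor comes instead from Rogers' choice of test function: the radial, piecewise-logarithmic $f_r$ whose integral equals $V_{mn^2t}\,r^{mn^2t}\cdot e(1-e^{-t})/(mn^2t)$. Averaging gives a lattice with $\sum_{y\in\Lambda'} f_r(y) < |G_0|/(mn^2)$; the $G_0$-invariant norm (Lemma~\ref{lemma:unitaction}) collapses this to $\sum_{j=1}^t f_r(\min_j(\Lambda))<1/(mn^2)$ summed over the successive $A$-minima, and the specific shape of $f_r$ then forces $\prod_j \min_j(\Lambda) > r^t$. Proposition~\ref{prop:productminima} converts this product-of-minima bound into a shortest-vector bound on a \emph{new} lattice $\tilde\Lambda$ --- that is its role here, not ``turning a first-moment estimate into a pointwise guarantee'' as you wrote. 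Without the Rogers test function and the successive-minima argument, the factor of $t$ is simply not available from your plan.
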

Note that Proposition \ref{prop:productminima} mentioned in the theorem is a version of a lemma of Minkowski extended to the division algebra setting. The theorem above is derived from Theorem \ref{thm:specificaverage} by mimicking an approach of Rogers \cite{RogersExistence(Annals)}, later used by Vance \cite{VanceImprovedBounds} and Campello \cite{CampelloRandom}. In this way, we combine nearly all of these existence results and generalize them in one theorem. \par 
In order to obtain the densest packings asymptotically, one therefore seeks families of orders $\calO$ with large finite unit groups $G_0\subset \calO^*$. Building on Amitsur's classification \cite{Amitsur1955FiniteSO} and following \cite{gargava2021lattice}, we give examples of such families. For instance, one may consider quaternion algebras over cyclotomic fields and hope to combine the improvements over the Minkowski-Hlawka bounds obtained by Vance and Venkatesh. However, due to a parity condition on the dimension, this is not quite the case and one obtains asymptotic lower bounds
$$\Delta_n\geq \frac{3\cdot (\log\log n)^{7/24}n\cdot(1+o(1))}{2^n}.$$
Still, the lower bound obtained exceeds the lower bound from cyclotomic fields $n\log\log n\cdot 2^{-(n+1)}$ in less than astronomical dimensions due to the improved constant. Moreover, one can exhibit lattices from non-commutative rings that achieve the same asymptotic density as the cyclotomic lattices (see Proposition \ref{prop:loglogimprovement}).
\par 
Finally, in Section \ref{sec:five} we establish effectivity by giving lower bounds on the norm of the prime $\ip$ so that packing densities such as in Theorem \ref{thm:mainintro} can be achieved, both by varying the division algebra and its dimension (Theorem \ref{thm:effective}) and by varying the rank of the $\calO$-lattices to obtain an effective version of Vance's results (Proposition \ref{prop:effectiveVance}). As an application, we obtain in Proposition \ref{prop:cycloquaternionseffective}: 
\begin{prop}
  Let $m_k=\prod_{\substack{p\leq k \text{ prime}\\2\nmid\ord_2p}}p$ and set $n_k:=8\varphi(m_k)$. Then for any $\varepsilon>0$ there is an effective constant $c_\varepsilon$ such that for $k>c_\varepsilon$ a lattice $\Lambda$ in dimension $n_k$ with density 
  $$\Delta(\Lambda)\geq (1-\varepsilon)\frac{24\cdot m_k}{2^{n_k}}$$
  can be constructed in $e^{4.5\cdot n_k\log(n_k)(1+o(1))}$ binary operations. 
  This construction leads to the asymptotic density of 
  $\Delta(\Lambda)\geq (1-e^{-n_k})\frac{3\cdot n_k(\log\log n_k)^{7/24}}{2^{n_k}}$. 
\end{prop}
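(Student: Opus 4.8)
The plan is to derive Proposition~\ref{prop:cycloquaternionseffective} as a special case of the general effective theorems of Section~\ref{sec:five} applied to a carefully chosen family of quaternion algebras over cyclotomic fields. First I would fix the arithmetic setup: take $K_k=\IQ(\zeta_{m_k})$ with $m_k=\prod_{p\leq k,\ 2\nmid \ord_2 p}p$, so that $[K_k:\IQ]=\varphi(m_k)$, and consider a quaternion division algebra $A_k$ over $K_k$; the relevant finite unit group $G_0\subset\calO^\times$ should be chosen so that $|G_0|=24\cdot\varphi(m_k)/\big(\text{something}\big)$—concretely, combining the binary tetrahedral/Hurwitz-type units of the quaternion algebra (contributing a factor comparable to $24$) with the roots of unity $\mu_{m_k}$ coming from the cyclotomic center, exactly as in the constructions underlying Proposition~\ref{prop:loglogimprovement} and the displayed bound $\Delta_n\geq 3(\log\log n)^{7/24}n\,2^{-n}(1+o(1))$ in the introduction. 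Here $n_k=n^2mt$ with $n=2$ (quaternion), $m=\varphi(m_k)$ and $t=2$, giving $n_k=4\cdot\varphi(m_k)\cdot 2=8\varphi(m_k)$ as claimed.

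Next I would invoke Theorem~\ref{thm:mainintro} (equivalently Theorem~\ref{thm:improvedbounds}) with these parameters to get the existence of $\Lambda_\varepsilon$ with
\[
\Delta(\Lambda_\varepsilon)\geq (1-\varepsilon)\frac{|G_0|\,\zeta(n_k)\cdot t}{2^{n_k}e(1-e^{-t})},
\]
and then substitute $t=2$, $\zeta(n_k)=1+o(1)$, and the value of $|G_0|$. The factor $\tfrac{t}{e(1-e^{-t})}=\tfrac{2}{e(1-e^{-2})}$ together with the $24$-type contribution from the quaternion units must be checked to collapse to the clean constant $24$ in front of $m_k/2^{n_k}$; this is the one genuinely delicate bookkeeping step, since it requires that $|G_0|$ be exactly large enough (of size $\asymp 12\, e(1-e^{-2})\cdot m_k$, i.e.\ the binary octahedral or a suitable order-$48$ group times a cyclotomic contribution proportional to $m_k$) to absorb the analytic constants. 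For the asymptotic reformulation $\Delta(\Lambda)\geq(1-e^{-n_k})\,3n_k(\log\log n_k)^{7/24}2^{-n_k}$ I would use the prime-counting estimate $\log m_k = \sum_{p\leq k,\,2\nmid\ord_2 p}\log p \sim \tfrac12 k$ (the density of primes $p$ with $\ord_2 p$ odd is $\tfrac{7}{24}$ by a classical result, e.g.\ Hasse), so that $m_k = e^{(1/2+o(1))k}$ while $\varphi(m_k)\sim m_k\prod(1-1/p)$, and then express $m_k$ in terms of $n_k$: from $n_k=8\varphi(m_k)$ one gets $k\sim \log n_k$ and a short computation yields $m_k\asymp n_k(\log\log n_k)^{7/24}$, which is precisely where the exponent $7/24$ enters.

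Finally, for the effectivity and complexity claim I would quote Theorem~\ref{thm:effective}: it provides an explicit lower bound on the norm $\N(\ip)$ of the prime of $\calO_{K_k}$ one must use so that the pre-image of the appropriate code (isomorphic to $k$ copies of a simple $\calO/\ip\calO$-module with $n_kt-t<k<n_kt$) contains a lattice of the asserted density. The number of lattices to search is polynomial in $\N(\ip)^{n_k}$ and the dominant cost is enumerating short vectors / computing minima via Proposition~\ref{prop:productminima}; bounding $\N(\ip)$ by a quantity of size $e^{O(n_k\log n_k)}$ (which is what Theorem~\ref{thm:effective} should give, as the required prime norm grows like a fixed power of the dimension times combinatorial factors) yields the stated $e^{4.5\, n_k\log(n_k)(1+o(1))}$ binary operations. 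The main obstacle I anticipate is not any single hard inequality but the precise tracking of constants—verifying that the constant $4.5$ in the exponent is exactly what Theorem~\ref{thm:effective} delivers for this family (it comes from combining the bound on $\N(\ip)$, the number of $\IF_q$-subspaces of the relevant dimension, and the cost of each minimum computation), and simultaneously confirming that $|G_0|$ is chosen so that the leading constant is exactly $24$ rather than merely $\Theta(1)$. Both are finite, explicit checks once the family $A_k$ and the subgroup $G_0$ are pinned down, so the proof reduces to assembling Theorems~\ref{thm:mainintro}, \ref{thm:effective} and the cyclotomic examples of Section~\ref{sec:four} and carrying out this constant-chasing.
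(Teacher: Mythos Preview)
Your overall strategy---specialize the effective theorem of Section~\ref{sec:five} to the quaternion algebras $A_k=\left(\tfrac{-1,-1}{\IQ(\zeta_{m_k})}\right)$ with $t=2$---is correct, but two concrete points are off and cause the constant-chasing you anticipate to become impossible.

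First, the group $G_0$ is not something to be reverse-engineered to absorb analytic factors: by the Amitsur-type result recorded just before Proposition~\ref{prop:cycloHurwitz}, the order $\calO$ contains $\mathfrak{T}^*\times\IZ/m_k\IZ$, so $|G_0|=24\,m_k$ exactly. The clean constant $24$ is therefore immediate once one applies Theorem~\ref{thm:effective}, whose conclusion is $\Delta(\Lambda)\geq(1-\varepsilon)|G_0|/2^{2n^2m}$ with \emph{no} factor $t/(e(1-e^{-t}))$. You are invoking Theorem~\ref{thm:improvedbounds} (the Rogers-type bound) instead; but as remarked after that theorem and in Proposition~\ref{prop:simpleimprovedbounds}, for $t=2$ the indicator-function bound is actually the stronger and simpler one, and it is precisely what Theorem~\ref{thm:effective} makes effective. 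Using the wrong theorem is what forces you to conjure a group ``of size $\asymp 12\,e(1-e^{-2})m_k$'', which does not exist in $\calO^\times$.

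Second, your complexity sketch overcounts. The number of codes in $\calC_{k,p}$ is a \emph{fixed} polynomial in $q$ (here $O(p^6)$, since $t=2$, $n=2$), not polynomial in $\N(\ip)^{n_k}$. The exponent $4.5$ arises as follows: the first condition of Theorem~\ref{thm:effective} together with $d(\calO_K/\IZ)\leq m_k^{\varphi(m_k)}$ forces $(m_k\varphi(m_k)^2)^{2\varphi(m_k)}=o(q)$; an effective \v Cebotarev argument locates a totally split prime $p\equiv 1\pmod{m_k}$ near $m_k^{6\varphi(m_k)}=e^{(3/4)n_k\log n_k(1+o(1))}$; enumerating the $O(p^6)$ codes then costs $e^{6\cdot(3/4)n_k\log n_k(1+o(1))}=e^{4.5\,n_k\log n_k(1+o(1))}$. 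The shortest-vector computations are $2^{O(n_k)}$ and do not affect the leading term. Your asymptotic rewriting via $m_k/\varphi(m_k)\asymp(\log\log n_k)^{7/24}$ is the right idea and is exactly Proposition~\ref{prop:cycloHurwitz}.
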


We ought to stress that such effective lower bounds on the density are not the first of their kind but that there is a rather rich history of such results. Rush \cite{Rush1989}, building on work with Sloane \cite{RushnSloane}, recovered the Minkowski-Hlawka bound via coding-theoretic results such as the Varshamov-Gilbert bound and by lifting codes via the Leech-Sloane Construction A (see \cite[Chapter 5]{Splag}). The connection between random coding and such averaging results was further explicited by Loeliger \cite{Loeliger97averagingbounds}. This leads to families of approximate size $e^{n^2\log n}$ in which to search for lattices achieving the Minkowski-Hlawka bound. Gaborit and Z\'emor \cite{GaboritZemor2007} exploited additional structures to reduce the family size to $e^{n\log n}$. Finally, Moustrou \cite{MoustrouCodes} used a similar approach for cyclotomic lattices to obtain an effective version of Venkatesh's result. This approach was further formalized by Campello \cite{CampelloRandom}, where an example of such results for quaternion algebras is also mentioned. Our work thus owes a lot to these existing constructions. In particular, our approach is chiefly based on Moustrou's and Campello's work \cite{MoustrouCodes,CampelloRandom} and extends the scope of their results to division algebras, allowing symmetries from arbitrarily large non-commutative finite groups. We also note that the utility of codes from division rings is well-studied by coding theorists, see for example \cite{Berhuy2013AnIT, DucoatOggier, Vehkalahti2021}. \par

We hope that this article provides a useful addition to both the coding-theoretic and mathematical literature. The effective results we arrive at in Section \ref{sec:five} typically have a complexity of $e^{C\cdot  n\log n(1+o(1))}$, which is similar to \cite[Theorem 1]{MoustrouCodes}. However, the effective version of Vance's result (see Corollary \ref{cor:computationalVance}) has complexity $e^{1/4\cdot  n^2\log n(1+o(1))}$ and it should be similar for other constructions obtained by increasing the $\calO$-rank of the lattices.\par

The running times correspond to an exhaustive search through all the finitely many lattice candidates and it would be interesting to examine if one can further reduce the complexity of this search. It should be remarked that our results can still be used to quickly generate one of these random lattices in high dimensions that have prescribed symmetries and are expected to have large minimal vectors. Furthermore, our results hint towards the algebraic structures that one might look for in order to construct explicit families of lattices that are asymptotically ``good''.

\section{Preliminaries on division rings}\label{sec:two}
 In this section, we recall some definitions and results on central simple algebras and in particular division rings. The primary reference is Reiner's book \cite{reiner2003maximal}. 
 Let $\calO_K$ denote a Dedekind ring with quotient field $K$ and let $A$ denote a separable $K$-algebra. 
 \begin{definition}
   An $\calO_K$-order in $A$ is a subring $\calO$ of $A$ having the same identity element and such that $\calO$ is a \emph{full} $\calO_K$-lattice in $K$, i.e. $\calO$ is a finitely generated $\calO_K$-submodule of $A$ such that $K\cdot \calO=A$. 
 \end{definition}
 The existence of $\calO_K$-orders and maximal orders is easily shown. From now on let $\calO$ denote such an order. We first
review some results about ideals in $\calO$. Note that we shall typically state results for general $\calO_K$-orders, although simply dealing with maximal orders suffices for our applications. \subsection{Prime ideals}
\begin{defn}
A prime ideal of $\calO$ is a proper two-sided  ideal $\ip$ in $\calO$ such that $K\cdot \ip =A$ and such that for every pair of two sided ideals $S,T$ in $\Lambda$, we have that $S\cdot T\subset \ip$ implies $S\subset \ip$ or $T\subset \ip$. 
\end{defn}
For a prime $p$ of $\calO_K$ we shall denote by $\calO_p,A_p$ the localizations at $p$ of the $\calO_K$-order $\calO$ and of $A$ and by $\hat{\calO}_p,\hat{A_p}$ the respective completions. Finally let $\rad(R)$ denote the Jacobson radical of a ring $R$. It is easy to obtain (see \cite[Thm 22.3,22.4]{reiner2003maximal}) the characterization: 
\begin{theorem}\label{thm:primecorrespondence}
The prime ideals of an $\calO_K$-order $\calO$ coincide with the maximal two-sided ideals of $\calO$. If $\ip$ is a prime ideal of $\Lambda$, then $p=\ip\cap \calO_K$ is a non-zero prime of $\calO_K$, and $\overline{\calO}:=\calO/\ip$ is a finite dimensional simple algebra over the residue field $\calO_K/p$. Moreover, when $A$ is a central simple $K$-algebra, there is a bijection $\ip\leftrightarrow p$ between the set of primes of $\calO$ and of $\calO_K$, given by 
$$p=R\cap \ip\text{ and }\ip=\calO\cap \rad(\calO_p).$$
\end{theorem}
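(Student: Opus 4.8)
\medskip
\noindent The plan is to derive the first two assertions from the structure theory of Artinian rings and the bijection from the local arithmetic of central simple algebras. First I would show that, for a prime ideal $\ip$, the ring $\calO/\ip$ is Artinian and $p:=\ip\cap\calO_K$ is a nonzero ideal of $\calO_K$. Since $K\cdot\ip=A\ni1$, one can write $1=\sum_i c_ix_i$ with $c_i\in K$ and $x_i\in\ip$, and multiplying through by a common denominator $d\in\calO_K\setminus\{0\}$ with all $dc_i\in\calO_K$ yields $d=\sum_i(dc_i)x_i\in\ip$; hence $0\neq d\in p$ and $d\calO\subseteq\ip$. As $\calO$ is a finitely generated $\calO_K$-module, $\calO/d\calO$ has finite length over $\calO_K$, so $\calO/\ip$ is Artinian, and it is plainly an algebra over $\calO_K/p$.

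Next I would identify the prime ideals with the maximal two-sided ideals. By definition $\ip$ is prime exactly when $\calO/\ip$ is a prime ring; an Artinian prime ring has nilpotent, hence zero, Jacobson radical, so it is a simple ring. Thus $\ip$ is prime iff $\calO/\ip$ has no proper nonzero two-sided ideal, iff $\ip$ is maximal among two-sided ideals of $\calO$. Conversely a maximal two-sided ideal $\ip$ is nonzero (a module-finite algebra over a Dedekind domain is never simple), so $K\cdot\ip$ is a nonzero two-sided ideal of the simple algebra $A$, hence equal to $A$; that is, $\ip$ is full. To see $p$ is prime, suppose $a,b\in\calO_K$ with $ab\in p$; since $a,b$ lie in the centre $K$, the subsets $a\calO$ and $b\calO$ are two-sided ideals with $(a\calO)(b\calO)=ab\calO\subseteq\ip$, so $a\in p$ or $b\in p$. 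Then $\calO_K/p$ is a field, $\calO/\ip$ is a finite-dimensional $\calO_K/p$-algebra, and it is simple by what precedes. This proves the first two assertions.

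For the bijection (with $A$ central simple, and $\calO$ maximal, which suffices for the applications) I would fix a nonzero prime $p$ of $\calO_K$ and observe that any prime $\ip$ of $\calO$ with $\ip\cap\calO_K=p$ contains $p\calO$; hence the primes over $p$ are exactly the maximal two-sided ideals of $B:=\calO/p\calO$, equivalently the simple Wedderburn factors of $B/\rad(B)$. To count these I would complete at $p$: $\widehat{\calO}_p$ is a maximal order in $\widehat{A}_p:=A\otimes_K\widehat{K}_p$, and $\widehat{A}_p$ is central simple over the local field $\widehat{K}_p$, so $\widehat{A}_p\cong M_r(D)$ for a central division $\widehat{K}_p$-algebra $D$. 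As $D$ has a unique maximal order $\mathfrak{o}_D$ (its valuation ring), with a unique maximal two-sided ideal and simple residue ring, $\widehat{\calO}_p\cong M_r(\mathfrak{o}_D)$ has $\widehat{\calO}_p/\rad(\widehat{\calO}_p)$ simple. Since $B=\widehat{\calO}_p/p\widehat{\calO}_p$ and $p\widehat{\calO}_p\subseteq\rad(\widehat{\calO}_p)$ (because $\widehat{\calO}_p$ is module-finite over a complete local ring with maximal ideal generated by $p$), we get $B/\rad(B)\cong\widehat{\calO}_p/\rad(\widehat{\calO}_p)$, which is simple; hence exactly one prime $\ip$ lies over $p$. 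Tracing through the identifications shows $\ip=\calO\cap\rad(\calO_p)$, and $\ip\mapsto\ip\cap\calO_K$ is the inverse.

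I expect the last step to be the main obstacle, as it is the only place where the central simple hypothesis is truly used: already a non-maximal order inside $M_2(\IQ)$, or a merely semisimple $A$, can admit several primes over a single $p$. Its essential inputs --- the uniqueness of the maximal order of a central division algebra over a local field and of its maximal two-sided ideal, and the inclusion of $p\widehat{\calO}_p$ in the radical --- are classical and are exactly the ingredients collected in \cite[\S\S12--13, 22]{reiner2003maximal}, the reference cited after the theorem.
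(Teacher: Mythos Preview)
Your argument is correct and follows the standard route (which is essentially what Reiner does in the cited theorems); the paper itself offers no proof beyond the reference to \cite[Thm.~22.3--22.4]{reiner2003maximal}. Two small remarks. First, when you show that a maximal two-sided ideal $\ip$ is full you invoke simplicity of $A$, but the first two assertions are stated for a merely separable $A$; this is easily repaired by arguing directly that $\ip\cap\calO_K\neq 0$ (the simple ring $\calO/\ip$ has a field as centre, into which $\calO_K$ cannot inject since $\calO/\ip$ is module-finite over $\calO_K\neq K$), whence $1=d^{-1}\cdot d\in K\ip$ for any nonzero $d\in\ip\cap\calO_K$. Second, your decision to prove the bijection only for maximal $\calO$ is prudent and matches how the paper actually uses the result; as you observe, an Eichler order in $M_2(\IQ)$ already has two primes over its level, so some hypothesis on $\calO$ is genuinely needed there.
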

We note that when $A$ is a central simple $K$-algebra, the quotient $\calO/\ip$ is a simple Artin ring and hence a ring of matrices over a division ring. In particular, when $\calO_K/p$ is a finite field the quotient $\calO/\ip$ is isomorphic to a ring of matrices over a finite field. \par

We now summarize the behavior of $\calO$ and $A$ under localization as well as the splitting behavior for central simple algebras:
\begin{theorem}
Let $\calO$ be a maximal order in a central simple $K$-algebra $A$. Let $p$ denote a prime of $\calO_K$. Then the completion $\hat{A_p}$ is a central simple $\hat{K}_p$-algebra and $\hat{\calO}_p$ is a maximal order. Moreover:
\begin{enumerate}
    \item For almost every prime $p$ of $\calO_K$, we have that 
    $$\hat{A_p}\cong M_n(\hat{K}_p)$$
    with $n^2=[A:K]$ (\emph{split} or \emph{unramified} case). Moreover $p$ is split if and only if the corresponding prime ideal $\ip$ of $\calO$ as in Theorem \ref{thm:primecorrespondence} is just $p\calO$. \\
    \item The order $\Lambda=M_n(\hat{\calO}_p)$ is a maximal $\hat{\calO}_{K_p}$-order in $M_n(\hat{K}_p)$ having a unique maximal two-sided ideal $\pi_{\hat{K}_p}\Lambda$, where $\pi_{\hat{K}_p}$ is a prime element of the discrete valuation ring $\hat{\calO}_{K_p}$. The powers 
    $$(\pi_{\hat{K}_p}\Lambda)^t=\pi_{\hat{K}_p}^t\cdot\Lambda \text{ for }t=0,1,2,\ldots$$
    exhaust all the non-zero two-sided ideals of $\Lambda$ and any maximal $\hat{\calO}_{K_p}$-order is of the form $u\Lambda u^{-1}$ for $u\in \GL_n(\hat{K}_p)$. 
\end{enumerate}
\end{theorem}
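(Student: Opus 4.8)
The plan is to assemble all of these assertions from the structure theory of maximal orders over (complete) discrete valuation rings together with standard facts about central simple algebras, essentially compiling the relevant statements of \cite{reiner2003maximal}. First I would dispatch the two opening claims: $\hat{A}_p = A\otimes_K\hat{K}_p$ is central simple over $\hat{K}_p$ because base change of a central simple algebra along any field extension stays central simple, and $\hat{K}_p$ is a field (the completion of the Dedekind domain $\calO_K$ at $p$); and $\hat{\calO}_p$ remains maximal because maximality of an order can be checked locally and then after completion — $\calO$ is maximal iff each $\calO_p$ is maximal, iff each $\hat{\calO}_p$ is — using faithful flatness of completion and finite generation of $\calO$ over $\calO_K$.

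For part (1), the key point I would isolate is the finiteness of the set of \emph{ramified} primes of $A$: a maximal order has a nonzero (reduced) discriminant ideal of $\calO_K$, and $p$ fails to be split precisely when it divides this ideal, so all but finitely many $p$ are split. At a split $p$ the completion $\hat{A}_p$ is a matrix algebra by definition, and comparing $\hat{K}_p$-dimensions forces $\hat{A}_p\cong M_n(\hat{K}_p)$ with $n^2 = [\hat{A}_p:\hat{K}_p] = [A:K]$. For the criterion in terms of $\ip$, I would argue both directions: in the split case $\hat{\calO}_p$ is conjugate to $M_n(\hat{\calO}_{K_p})$, whose Jacobson radical is $M_n(\pi_{\hat{K}_p}\hat{\calO}_{K_p}) = p\,M_n(\hat{\calO}_{K_p})$, so Theorem \ref{thm:primecorrespondence} gives $\ip = \calO\cap\rad(\calO_p) = \calO\cap p\calO_p = p\calO$; conversely, if $p$ is ramified then $\hat{A}_p$ is a division algebra, $\hat{\calO}_p$ is its valuation ring, $\rad(\hat{\calO}_p)$ is the maximal ideal of that ring, which strictly contains $p\hat{\calO}_p$ (a suitable power of a uniformizer of the division algebra recovers a uniformizer of $\hat{K}_p$ up to a unit), whence $\ip\supsetneq p\calO$. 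Thus $\ip = p\calO$ is equivalent to $p$ being split.

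For part (2), I would first cite that a matrix ring over the complete DVR $\hat{\calO}_{K_p}$ is a maximal $\hat{\calO}_{K_p}$-order in $M_n(\hat{K}_p)$. Then I would show that every two-sided ideal of $M_n(R)$ has the form $M_n(\mathfrak{a})$ for a two-sided ideal $\mathfrak{a}\subseteq R$ — a short matrix-unit computation, or Morita equivalence; applied to $R=\hat{\calO}_{K_p}$ with maximal ideal $(\pi_{\hat{K}_p})$, this makes the nonzero two-sided ideals of $\Lambda$ exactly $M_n(\pi_{\hat{K}_p}^t\hat{\calO}_{K_p}) = \pi_{\hat{K}_p}^t\Lambda$, with $\pi_{\hat{K}_p}\Lambda$ the unique maximal one. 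For conjugacy, I would use that a maximal order in $M_n(\hat{K}_p) = \End_{\hat{K}_p}(\hat{K}_p^n)$ is of the form $\End_{\hat{\calO}_{K_p}}(L)$ for a full $\hat{\calO}_{K_p}$-lattice $L$; since $\hat{\calO}_{K_p}$ is a (complete, hence principal ideal) DVR, every such $L$ is free of rank $n$, hence $\GL_n(\hat{K}_p)$-conjugate to $\hat{\calO}_{K_p}^n$, and conjugation by the corresponding $u$ carries the order to $\Lambda$.

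The main obstacle — really the only step that is not either an elementary matrix manipulation or routine commutative algebra of localizations and completions — is the finiteness of the ramification locus, which ultimately rests on the discriminant of a maximal order being a well-defined nonzero ideal of $\calO_K$; I would simply quote this from \cite{reiner2003maximal} rather than reprove it. Everything else reduces to careful bookkeeping with $\calO_p$, $\hat{\calO}_p$, and matrix units, together with the definitions already recalled above.
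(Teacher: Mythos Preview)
Your proposal is essentially correct and matches the paper's approach, which is simply to cite \cite[Theorems 17.3, 32.1]{reiner2003maximal}; you have in effect outlined the content of those references. One small slip: in arguing the converse of the $\ip=p\calO$ criterion you assert that at a ramified prime $\hat{A}_p$ is itself a division algebra, but the theorem is stated for a general central simple $A$, so one only has $\hat{A}_p\cong M_m(D)$ with $D$ a nontrivial local division algebra over $\hat{K}_p$ --- your argument still goes through after replacing $\hat{\calO}_p$ by its conjugate $M_m(\calO_D)$, whose radical $M_m(\pi_D\calO_D)$ strictly contains $p\,M_m(\calO_D)$ because $D/\hat{K}_p$ has ramification index $>1$.
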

\begin{proof}
These are well-known results, see e.g. \cite[Theorems 17.3, 32.1]{reiner2003maximal}.
\end{proof}

In what follows, we will thus use the same notation $\ip$ for the primes of $\calO_K$ and the corresponding prime in $\calO$ in the split case. 
\par
We can now establish the following lemma which will provide the necessary reduction maps in order to lift codes from characteristic $p$ to division rings: 
\begin{lemma}\label{lemma:primesexistence}
Let $K$ be a number field and $A$ a division algebra with center $K$ so that $[A:K]=n^2$. Let $\calO$ be an $\calO_K$-order in $A$. Then for all but finitely many primes $\ip$ of $\calO_K$, the quotient $\calO/\ip\calO$ is isomorphic to $M_n(\mathbb{F}_q)$, where $\calO_K/\ip\calO_K\cong \IF_q$. 
\end{lemma}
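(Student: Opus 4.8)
The plan is to pass from $\calO$ to a maximal order and then read off the reduction from the local structure theory recalled above. First I would fix a maximal $\calO_K$-order $\calM$ in $A$ with $\calM\supseteq\calO$ (maximal orders exist, and every order is contained in one). Since $\calO$ and $\calM$ are both full $\calO_K$-lattices in $A$, the quotient $\calM/\calO$ is a finitely generated torsion $\calO_K$-module, hence supported on a finite set $S_0$ of primes of $\calO_K$ (the primes dividing the conductor of $\calO$); for $\ip\notin S_0$ we have $\calO_\ip=\calM_\ip$. As $\calO_K/\ip$ is a field, every element of $\calO_K\setminus\ip$ already acts invertibly on $\calO/\ip\calO$ and on $\calM/\ip\calM$, so
\[
\calO/\ip\calO\;\cong\;\calO_\ip/\ip\calO_\ip\;=\;\calM_\ip/\ip\calM_\ip\;\cong\;\hat\calM_\ip/\ip\hat\calM_\ip ,
\]
and it suffices to identify this last ring for all but finitely many $\ip$.

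Next I would invoke the splitting theorem for central simple algebras recalled above: it furnishes a finite set $S_1$ of primes of $\calO_K$ such that for $\ip\notin S_1$ one has $\hat A_\ip\cong M_n(\hat K_\ip)$ with $n^2=[A:K]$ (the unramified case), and moreover $\hat\calM_\ip$ is then a maximal $\hat\calO_{K_\ip}$-order in $M_n(\hat K_\ip)$. By the same theorem, every maximal order in $M_n(\hat K_\ip)$ is of the form $u\,M_n(\hat\calO_{K_\ip})\,u^{-1}$ for $u\in\GL_n(\hat K_\ip)$, so $\hat\calM_\ip\cong M_n(\hat\calO_{K_\ip})$ as rings. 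Reducing modulo $\ip$ and using that the residue field of the complete discrete valuation ring $\hat\calO_{K_\ip}$ is $\calO_K/\ip\cong\IF_q$ gives
\[
\hat\calM_\ip/\ip\hat\calM_\ip\;\cong\;M_n(\hat\calO_{K_\ip})/\ip\,M_n(\hat\calO_{K_\ip})\;\cong\;M_n\!\bigl(\hat\calO_{K_\ip}/\ip\hat\calO_{K_\ip}\bigr)\;\cong\;M_n(\IF_q).
\]
Combining with the previous step, for every $\ip\notin S_0\cup S_1$ we obtain $\calO/\ip\calO\cong M_n(\IF_q)$, which is the claim, with the finite exceptional set $S_0\cup S_1$.

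All of the individual steps are standard applications of the structure theory of orders; the only two points needing genuine care are the bookkeeping ones: (i) that $\calO$ and a fixed maximal order $\calM$ have the same completion outside the finitely many primes in the support of $\calM/\calO$, so that the reduction may legitimately be computed on the maximal order; and (ii) that ``unramified/split'' really produces a full matrix ring over the residue field $\IF_q$ rather than a matrix ring over a nontrivial extension — this is exactly what the local computation above pins down, and it is consistent with Theorem \ref{thm:primecorrespondence}, since in the split case the prime of $\calM$ over $\ip$ is $\ip\calM$ and $\calM/\ip\calM$ is precisely the associated simple Artinian quotient, of $\calO_K/\ip$-dimension $n^2$. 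I do not anticipate any obstacle beyond assembling these facts in the right order.
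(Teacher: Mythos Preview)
Your proof is correct and follows essentially the same route as the paper: reduce to a maximal order by discarding the finitely many primes in the support of $\calM/\calO$, then use the local splitting $\hat A_\ip\cong M_n(\hat K_\ip)$ together with the fact that maximal orders in $M_n(\hat K_\ip)$ are all conjugate to $M_n(\hat\calO_{K_\ip})$ to compute the reduction. If anything, your write-up is slightly more explicit than the paper's about the passage $\calO/\ip\calO\cong\hat\calM_\ip/\ip\hat\calM_\ip$ and about why the non-maximal case costs only finitely many primes.
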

\begin{proof}
This follows from our previous considerations and in particular the fact that only finitely many primes of $A$ are ramified. Indeed, first assume that $\calO$ is a maximal order. Then we know that the completion satisfies
$$\hat{A}_\ip=A\otimes_K\hat{K}_\ip\cong M_n(\hat{K}_\ip)$$
and moreover that the order $\hat{\calO}_\ip$ in the local case is maximal and conjugate to $M_n(\hat{\calO}_{K_\ip})$. In particular, we then have
$$\calO/\ip\calO\cong M_n(\hat{\calO}_{K_\ip})/\rad(M_n(\hat{\calO}_{K_\ip}))\cong M_n(\hat{\calO}_{K_\ip}/\ip\hat{\calO}_{K_\ip})\cong M_n(\IF_q) .$$
When the order is not maximal, the same holds if one additionally avoids primes dividing the index of $\calO$ in a maximal order since their localizations will coincide at those primes. 
\end{proof}
If in the above one wishes to ensure $q=p$ is prime this can also typically be achieved for infinitely many primes by imposing that $p$ splits in $\calO_K$ and by the \v Cebotarev density theorem.\par

\subsection{Lattices from orders}
The central simple algebras $A$ are equipped with natural embeddings $A\hookrightarrow A\otimes_\IQ\IR$. The latter space being a semisimple $\IR$-algebra, it can be identified with a product of matrix rings over $\IR, \IC$ and $\mathbb{H}$ respectively, depending on the signature of $K$ and the splitting behavior of $A$ at infinity. An $\calO_K$-order then embeds as a lattice into this space. We establish some properties of these lattices in the next two subsections. 
\begin{lemma}
  Any semisimple $\mathbb{R}$-algebra $A$ admits an involution $(\ )^{*} : A \rightarrow A$ such that the following conditions are satisfied.
  \begin{itemize}
    \item For any $a,b \in A$, we have $(a b) ^{*}  = b^{*} a^{*}$.
    \item The trace yields a positive definite quadratic form $a \mapsto \T(a^{*} a  )$ on $A$, meaning that $\T(a^{*} a)$ is always non-negative and is zero only when $a =0$. Moreover, this quadratic form induces the inner product $\langle x, y\rangle =\T(x^* y)$ on $A$.
  \end{itemize}
  \label{le:involution}
  In particular, when $A$ is a division algebra over $\mathbb{Q}$, such an involution exists on $A \otimes_{\mathbb{Q}} \mathbb{R}$.
\end{lemma}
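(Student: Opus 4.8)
The plan is to reduce to the simple case and then write the involution down by hand. By the Artin--Wedderburn structure theorem, a semisimple $\IR$-algebra decomposes as $A\cong\prod_{i=1}^{r}M_{k_i}(D_i)$ with each $D_i\in\{\IR,\IC,\mathbb{H}\}$, the factors being the minimal two-sided ideals of $A$; moreover the trace $\T$ on $A$ is the sum of the corresponding traces on the factors. Since an $\IR$-linear, order-two anti-automorphism can be built on a product simply by taking the product of such maps on the factors, it suffices to produce the involution, and to verify the positivity and symmetry statements, on a single factor. So first I would fix $A=M_k(D)$, $D\in\{\IR,\IC,\mathbb{H}\}$, and note that $\T$ acts there as $\T(a)=\sum_{i}\mathrm{t}_D(a_{ii})$ for the natural $\IR$-linear trace $\mathrm{t}_D\colon D\to\IR$ (the identity on $\IR$ and $x\mapsto x+\bar x$ on $\IC$ and $\mathbb{H}$), which satisfies $\mathrm{t}_D(1)>0$ and $\mathrm{t}_D(\bar x)=\mathrm{t}_D(x)$.

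Next I would exhibit the involution. Each $D$ carries its canonical conjugation $x\mapsto\bar x$ --- the identity on $\IR$, complex conjugation on $\IC$, quaternionic conjugation on $\mathbb{H}$ --- which is $\IR$-linear, of order two, satisfies $\overline{xy}=\bar y\,\bar x$, and has the property that $\bar x x=|x|^{2}$ is the squared Euclidean norm of $x$ under $D\cong\IR^{\dim_\IR D}$, a nonnegative real vanishing only for $x=0$. Define $*$ on $M_k(D)$ as the conjugate transpose, $(a^{*})_{ij}:=\overline{a_{ji}}$. Then $(a^{*})^{*}=a$ and $\IR$-linearity are immediate, and the computation $(ab)^{*}_{ij}=\overline{(ab)_{ji}}=\overline{\sum_\ell a_{j\ell}b_{\ell i}}=\sum_\ell\overline{b_{\ell i}}\,\overline{a_{j\ell}}=\sum_\ell(b^{*})_{i\ell}(a^{*})_{\ell j}=(b^{*}a^{*})_{ij}$ gives the anti-multiplicativity, so $*$ is an involution on the algebra.

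It then remains to check positivity and symmetry. The diagonal of $a^{*}a$ is $(a^{*}a)_{ii}=\sum_{j}\overline{a_{ji}}a_{ji}=\sum_{j}|a_{ji}|^{2}\in\IR_{\ge 0}$, so
\[
\T(a^{*}a)=\sum_{i}\mathrm{t}_D\Big(\sum_{j}|a_{ji}|^{2}\Big)=\mathrm{t}_D(1)\sum_{i,j}|a_{ji}|^{2}\ \ge\ 0,
\]
with equality precisely when all $a_{ji}=0$, i.e.\ $a=0$; this is the claimed positive definiteness. For the bilinear form $\langle x,y\rangle=\T(x^{*}y)$, symmetry follows from $\T(z^{*})=\sum_i\mathrm{t}_D(\overline{z_{ii}})=\sum_i\mathrm{t}_D(z_{ii})=\T(z)$, whence $\langle y,x\rangle=\T(y^{*}x)=\T((y^{*}x)^{*})=\T(x^{*}y)=\langle x,y\rangle$; together with the positive definiteness of $x\mapsto\T(x^{*}x)$ this makes $\langle\,,\,\rangle$ an inner product, and by construction it polarizes $a\mapsto\T(a^{*}a)$. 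Finally, the last assertion of the lemma is immediate, since for a $\IQ$-division algebra $A$ the algebra $A\otimes_\IQ\IR$ is semisimple over $\IR$ and the preceding applies verbatim.

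I do not expect a genuine obstacle: the construction is completely explicit. The only points needing a little care are (i) checking that assembling the factorwise conjugate-transpose maps really yields an involution of the product algebra, which is why it matters that the factors are exactly the minimal two-sided ideals; and (ii) keeping track of the positive normalization constant $\mathrm{t}_D(1)$, which only rescales the resulting inner product and lattice and plays no role in the positivity.
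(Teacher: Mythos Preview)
Your argument is correct. The paper itself does not give a proof but simply refers to \cite[Corollary~35]{gargava2021lattice}; your explicit construction via Artin--Wedderburn and the factorwise conjugate-transpose is the standard route and is presumably what that reference contains. One minor inaccuracy: the algebra trace $\T$ on $M_k(D)$ is not literally $\sum_i \mathrm{t}_D(a_{ii})$ but a positive multiple of it (coming from the $k$ columns and, for $D=\mathbb{H}$, the discrepancy between the reduced trace and $\T_{D/\IR}$); you effectively acknowledge this in your closing remark, and since only positivity matters it does not affect the argument.
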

\begin{proof}
  See e.g. \cite[Corollary 35]{gargava2021lattice}
\end{proof}

We will denote $A \otimes_{\mathbb{Q}} \mathbb{R}$ by $A_{\mathbb{R}}$. Involutions with the properties as described in Lemma \ref{le:involution} will henceforth be called ``positive involutions''. An element $a \in A_{\mathbb{R}}$ such that $a^*=a$ and $x \mapsto \T(x^* ax)$ is a positive definite real quadratic form on $A_{\mathbb{R}}$ is called symmetric and positive definite. For instance, for any unit $a$ the element $a^*a$ is symmetric positive definite. \par
In practice, we will be considering $t\geq 2$ copies of orders $\calO$ in division algebras $A$ with center a number field $K$ and our lattices will be $\mathcal{O}^{t} \subseteq A^{t} \hookrightarrow (A_\mathbb{R})^{t}$. We will endow the space $A_\IR^t$ with the norm induced by the following quadratic form: 
  \begin{align}
    A_{\mathbb{R}}^{t} &  \rightarrow \mathbb{R} \\
    (x_1,x_2,\dots,x_t) & \mapsto  \sum_{i=1}^{t} \T(x_{i}^{*}ax_i),\label{eq:eva_norm}
  \end{align}
  where $(-)^*$ is a positive involution as defined above and $a \in A_{\mathbb{R}}$ is a symmetric positive definite element to be fixed later. This makes $A_{\IR}^t$ into an Euclidean space of real dimension $mn^2t$ and $\calO^t$ embeds as a lattice into that space.  

\subsection{Norm-trace inequality}
Recall that for a finite dimensional algebra $A$ over any field $k$ we have norm and trace functions $N_{A/k},T_{A/k}:A \rightarrow k$ given by the determinant and trace of the left multiplication maps respectively. It will be useful to establish some of their properties; for instance we later use the norm-trace inequality to give lower bounds on the Euclidean norm of certain lattice points via Lemma \ref{lemma:lowerboundbadpoints}.

\begin{lemma}\label{le:normtrace}
  Consider a finite dimensional semisimple $\mathbb{R}$-algebra $A_\IR$ together with a positive involution $(\ )^{*}$. Let $a \in A_\IR$ be a symmetric positive definite element and let $d = \dim_{\mathbb{R}} A_\IR$. Then $\N(a) > 0$, $\T(a) > 0$ and 
  \begin{align}
    \frac{1}{d}\T(a) \ge  \N(a)^{\frac{1}{d}}.
  \end{align}
\end{lemma}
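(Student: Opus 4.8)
The plan is to reduce the statement to the classical AM--GM inequality applied to the eigenvalues of $a$ acting on $A_\IR$. First I would recall that $A_\IR$ decomposes as a product of matrix algebras over $\IR$, $\IC$, and $\mathbb{H}$, and that the left regular representation realizes $A_\IR$ as a subalgebra of $\End_\IR(A_\IR)$; under this identification $\N(a)$ is the determinant and $\T(a)$ is the trace of the $\IR$-linear endomorphism $L_a\colon x\mapsto ax$. The crucial input is that, because $a$ is symmetric positive definite with respect to the inner product $\langle x,y\rangle=\T(x^*y)$, the endomorphism $L_a$ is self-adjoint and positive definite for this inner product: indeed $\langle L_a x,y\rangle=\T((ax)^*y)=\T(x^*a^*y)=\T(x^*ay)=\langle x,L_a y\rangle$ using $a^*=a$, and $\langle L_a x,x\rangle=\T(x^*ax)>0$ for $x\neq 0$ by the defining property of a symmetric positive definite element. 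Hence $L_a$ is diagonalizable over $\IR$ with strictly positive eigenvalues $\mu_1,\dots,\mu_d$.

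Given this, $\N(a)=\prod_{i=1}^d\mu_i>0$ and $\T(a)=\sum_{i=1}^d\mu_i>0$, which already yields the positivity claims. The inequality $\frac1d\T(a)\ge \N(a)^{1/d}$ is then exactly the inequality of arithmetic and geometric means
\begin{align}
  \frac{\mu_1+\cdots+\mu_d}{d}\ge (\mu_1\cdots\mu_d)^{1/d}
\end{align}
for the positive reals $\mu_1,\dots,\mu_d$, with equality precisely when all $\mu_i$ coincide, i.e. when $L_a$ is a positive scalar multiple of the identity. I would state this, note the equality case for completeness, and conclude.

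The only genuine subtlety — and the step I expect to require the most care — is making sure the two notions of ``norm'' and ``trace'' in play actually agree: the reduced norm/trace versus the norm/trace of the left regular representation, and the relation of both to the spectral data of $L_a$. Since Lemma \ref{le:normtrace} is phrased using $\N=N_{A/k}$ and $\T=T_{A/k}$ defined via left multiplication (as stated just before the lemma), the identification $\N(a)=\det L_a$, $\T(a)=\operatorname{tr} L_a$ is immediate by definition, so there is in fact nothing to reconcile here; one only needs that self-adjointness of $L_a$ with respect to $\langle\cdot,\cdot\rangle$ is the correct translation of ``$a$ symmetric positive definite,'' which is the computation displayed above. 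A secondary point worth checking is that $\langle\cdot,\cdot\rangle=\T(x^*y)$ is genuinely an inner product on $A_\IR$, but this is guaranteed by Lemma \ref{le:involution}. With these observations in place the proof is a one-line appeal to AM--GM.
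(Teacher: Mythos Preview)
Your argument is correct: showing that $L_a$ is self-adjoint and positive definite with respect to the inner product $\langle x,y\rangle=\T(x^*y)$, then applying AM--GM to its eigenvalues, is exactly the standard proof of this inequality. The paper does not supply its own argument here but merely cites \cite[Lemma~40]{gargava2021lattice}, whose proof is precisely the one you outline, so there is nothing further to compare.
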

\begin{proof}
  See \cite[Lemma 40]{gargava2021lattice}.
\end{proof}

\begin{corollary}
  In the same setting as above, we have for any $x \in A_\IR$ and for $a \in A_{\mathbb{R}}$ symmetric positive definite that 
  \begin{equation}
    \frac{1}{d} \T(x^{*} a x ) \ge \N(x)^{\frac{2}{d}} \N(a)^{\frac{1}{d}}.
  \end{equation}
  \begin{proof}
    Note that $x^* a x$ is symmetric and positive definite when $x$ is a unit in $A_\IR$. If $x$ is not a unit, then the right-hand side is trivial.
  \end{proof}
  
\end{corollary}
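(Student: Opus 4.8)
The plan is to reduce the stated inequality to the norm--trace inequality of Lemma~\ref{le:normtrace} by a change of variables. First I would observe that the claim splits into two cases according to whether $x$ is a unit in $A_\IR$. If $x$ is not a unit, then $\N(x)=0$ since $\N(x)$ is the determinant of left multiplication by $x$, so the right-hand side $\N(x)^{2/d}\N(a)^{1/d}$ vanishes, while the left-hand side $\frac1d\T(x^*ax)$ is non-negative because $(\ )^*$ is a positive involution and $a$ is symmetric positive definite (so $\T(x^* a x)\geq 0$ for all $x$, with equality possible). Hence the inequality holds trivially in that case.

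The substantive case is when $x$ is a unit. Here I would set $b := x^* a x$ and check that $b$ is again symmetric positive definite: symmetry follows from $(x^*ax)^* = x^* a^* x^{**} = x^* a x$ using $a^*=a$ and $(\ )^{**}=\mathrm{id}$, and positive-definiteness of the quadratic form $y\mapsto \T(y^* b y) = \T((xy)^* a (xy))$ follows from that of $y\mapsto \T(y^* a y)$ since $x$ is invertible. Now apply Lemma~\ref{le:normtrace} to $b$ in place of $a$: this gives $\frac1d\T(x^*ax) = \frac1d\T(b)\geq \N(b)^{1/d}$. It remains to compute $\N(b)$. Since the reduced/algebra norm $\N = \N_{A_\IR/\IR}$ is multiplicative on $A_\IR$ (it is the determinant of left multiplication, and left multiplication is an algebra homomorphism into $\End_\IR(A_\IR)$), we get $\N(x^*ax) = \N(x^*)\N(a)\N(x)$. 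Finally I would note $\N(x^*) = \N(x)$: this holds because a positive involution, being an anti-automorphism of $A_\IR$, preserves the characteristic polynomial of left multiplication up to the standard identification, or more concretely because $\T(x^*)=\T(x)$ and the involution permutes the simple factors compatibly with transpose/conjugate-transpose, under which the determinant is unchanged. Therefore $\N(b) = \N(x)^2\N(a)$ and the desired bound $\frac1d\T(x^*ax)\geq \N(x)^{2/d}\N(a)^{1/d}$ follows.

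The only mild obstacle is justifying $\N(x^*)=\N(x)$ cleanly; rather than invoking a general fact one can argue factor by factor. Writing $A_\IR \cong \prod_i M_{n_i}(D_i)$ with $D_i\in\{\IR,\IC,\mathbb H\}$, a positive involution acts on each factor as (a conjugate of) conjugate-transpose, possibly permuting factors in pairs, and on each such factor the determinant of left multiplication — which is the appropriate power of the reduced norm — is invariant under conjugate-transpose. Since this is exactly the level of detail already handled in the cited Lemma~40 and Corollary~35 of \cite{gargava2021lattice}, I would simply reference those for the structure theory and keep the proof of the corollary to the two or three lines above. This is essentially the argument already sketched in the excerpt's short proof, so the write-up is short; the content is entirely in Lemma~\ref{le:normtrace} together with multiplicativity of $\N$.
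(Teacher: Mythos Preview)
Your proposal is correct and follows exactly the same approach as the paper's proof: split into the unit/non-unit cases, and in the unit case apply Lemma~\ref{le:normtrace} to the symmetric positive definite element $b=x^*ax$. You simply spell out the step $\N(x^*ax)=\N(x)^2\N(a)$ (in particular $\N(x^*)=\N(x)$) that the paper leaves implicit; this is a harmless elaboration rather than a different argument.
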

We remark that the inequalities above are sharp, equality being achieved by appropriate scalar matrices. 
For central simple $K$-algebras $A$, one can furthermore define the ``reduced norm'' and the ``reduced trace'' which we denote by $\nr_{A/K},\tr_{A/K}:A \rightarrow K$. Definitions and properties can be found in \cite[Section 9]{reiner2003maximal}. 

The following definition and ensuing lemma can also be found in \cite[9.13-14]{reiner2003maximal}.

\begin{definition}
  Suppose $A$ is a central simple $L$-algebra and $K \subseteq L$ is a subfield such that $[L:K] < \infty$. Then for each $a \in A$, we define the ``relative reduced trace'' $\tr_{A/K}:A \rightarrow K$ and ``relative reduced norm'' $\nr_{A/K}: A \rightarrow K$  as 
  \begin{align}
    \tr_{A/K} = \T_{L/K} \circ \tr_{A/L},\\
    \nr_{A/K} = \N_{L/K} \circ \nr_{A/L}.
  \end{align}
\end{definition}

\begin{lemma}
  The following holds for a central simple $L$-algebra $A$ and a subfield $K \subseteq L$ with $[L:K] < \infty$ for any $a \in A$:
  \begin{align}
    \T_{A/K}(a) = \sqrt{[A:L]} \tr_{A/K},  \\
    \N_{A/K}(a) = \nr_{A/K}(a)^{\sqrt{[A:L]}}.
  \end{align}
\end{lemma}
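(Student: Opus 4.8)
The plan is to reduce the relative statement to the absolute case $K=L$. For a central simple algebra $A$ over a field $L$ with $[A:L]=n^2$ (this is always a perfect square, so $n=\sqrt{[A:L]}\in\IZ$), the reduced trace and reduced norm satisfy, by their construction via a splitting field $E/L$ — where $A\otimes_L E\cong M_n(E)$ and left multiplication of $A$ on itself corresponds to $n$ copies of the standard representation of $M_n(E)$, see \cite[Section 9]{reiner2003maximal} — the identities
$$\T_{A/L}(a)=n\cdot\tr_{A/L}(a),\qquad \N_{A/L}(a)=\nr_{A/L}(a)^{n}.$$
So the content of the lemma is exactly that these propagate correctly through the tower $A\supseteq L\supseteq K$.

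The bridge I would use is the transitivity of the ordinary (left-multiplication) norm and trace: for $a\in A$,
$$\T_{A/K}(a)=\T_{L/K}\bigl(\T_{A/L}(a)\bigr),\qquad \N_{A/K}(a)=\N_{L/K}\bigl(\N_{A/L}(a)\bigr).$$
Since $A$ is a finite free $L$-module and $L$ is a finite-dimensional $K$-vector space, this is the classical tower formula: choose an $L$-basis of $A$ and a $K$-basis of $L$, take the resulting $K$-basis of $A$, and write the matrix of left multiplication by $a$ in block form; its $K$-trace is $\T_{L/K}$ applied to the $L$-trace, and its $K$-determinant is $\N_{L/K}$ applied to the $L$-determinant. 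Granting this, the two assertions follow by a one-line computation. For the trace, using that $n$ is an integer scalar and $\T_{L/K}$ is $K$-linear,
$$\T_{A/K}(a)=\T_{L/K}\bigl(n\cdot\tr_{A/L}(a)\bigr)=n\cdot\T_{L/K}\bigl(\tr_{A/L}(a)\bigr)=n\cdot\tr_{A/K}(a),$$
the last equality being the definition $\tr_{A/K}=\T_{L/K}\circ\tr_{A/L}$; and for the norm, using multiplicativity of $\N_{L/K}$,
$$\N_{A/K}(a)=\N_{L/K}\bigl(\nr_{A/L}(a)^{n}\bigr)=\N_{L/K}\bigl(\nr_{A/L}(a)\bigr)^{n}=\nr_{A/K}(a)^{n},$$
by the definition $\nr_{A/K}=\N_{L/K}\circ\nr_{A/L}$.

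The only point that requires any real care is the transitivity formula itself, i.e. the fact that $\T_{A/K}$ and $\N_{A/K}$ may be computed in two stages through the intermediate field $L$ even though $A$ is noncommutative; but since the intermediate object $L$ is a (commutative) field and $A$ is free over it, this is genuinely the standard tower formula for modules and presents no obstacle. It is also worth recording explicitly at the outset that $[A:L]$ is a square, so that all the exponents and scalars appearing are integers and the statement makes literal sense.
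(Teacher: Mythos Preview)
Your argument is correct and is the standard one: reduce to the absolute case via the tower formulas for the ordinary trace and norm, then invoke the basic identities $\T_{A/L}=n\cdot\tr_{A/L}$ and $\N_{A/L}=\nr_{A/L}^{\,n}$. The paper does not actually supply a proof of this lemma; it simply cites \cite[9.13--14]{reiner2003maximal}, and what you have written is essentially the argument found there.
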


We may now establish the following lemmas:

\begin{lemma}
  Let $A$ be a division algebra over $\mathbb{Q}$ whose center is $K$ and $[A:K]= n^{2}$. Let $\mathcal{O} \subseteq A$ be a maximal order in the division algebra. Let $\ip$ be a prime ideal of $\calO_K$ for which $A$ splits and let $\mathbb{F}_q = \mathcal{O}_{K}/\ip$ denote the residue field. 
  Then the following diagram commutes:
\begin{equation}\label{nrddiagram}
\begin{tikzcd}
\mathcal{O} \arrow[d, "\phi_p"] \arrow[r, "\nr_{A/K}"] & \mathcal{O}_K \arrow[d, "\pi_\ip"] \\
\calO/\ip\calO\cong M_n(\mathbb{F}_q) \arrow[r, "\det"]                    & \mathbb{F}_q,                    
\end{tikzcd} 
\end{equation}
where the vertical maps designate reduction modulo $\ip$. 
  \label{lemma:nrdanddet}
\end{lemma}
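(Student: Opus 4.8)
The statement is that reduction modulo a prime $\ip$ is compatible with the reduced norm on one side and the determinant on $M_n(\IF_q)$ on the other. The plan is to reduce everything to the completed local picture, where $\hat{A}_\ip\cong M_n(\hat{K}_\ip)$ and $\hat{\calO}_\ip$ is conjugate to $M_n(\hat{\calO}_{K_\ip})$, and then invoke the two standard facts: first, that the reduced norm is preserved under scalar extension $A\to A\otimes_K L$ for any field extension (in particular under $K\to \hat{K}_\ip$), so that $\nr_{A/K}$ on $\calO$ agrees with the reduced norm on $\hat{\calO}_\ip\subseteq \hat{A}_\ip$; and second, that under the isomorphism $\hat{A}_\ip\cong M_n(\hat{K}_\ip)$ the reduced norm is, essentially by definition, the determinant — this is a basic property of the reduced norm (see \cite[Section 9]{reiner2003maximal}), and it is insensitive to the inner conjugation $u(\ )u^{-1}$ used to identify $\hat{\calO}_\ip$ with $M_n(\hat{\calO}_{K_\ip})$, since $\det$ is conjugation-invariant.

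Concretely, first I would fix the splitting isomorphism $\psi:\hat{A}_\ip\iso M_n(\hat{K}_\ip)$ carrying $\hat{\calO}_\ip$ onto $M_n(\hat{\calO}_{K_\ip})$, as supplied by Theorem 2.5 (the localization/splitting theorem) together with Lemma \ref{lemma:primesexistence}. The reduction map $\phi_p:\calO\to \calO/\ip\calO$ factors through $\hat{\calO}_\ip$ and, via $\psi$, is identified with the entrywise reduction $M_n(\hat{\calO}_{K_\ip})\to M_n(\IF_q)$; this identification is exactly the isomorphism $\calO/\ip\calO\cong M_n(\IF_q)$ from Lemma \ref{lemma:primesexistence}, whose proof already passes through this chain of isomorphisms. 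On the $K$-side, $\pi_\ip:\calO_K\to \IF_q$ likewise factors through $\hat{\calO}_{K_\ip}$. So it suffices to check commutativity of the diagram obtained by replacing the top row with $\hat{\calO}_\ip\xrightarrow{\nr}\hat{\calO}_{K_\ip}$ and then transporting along $\psi$: i.e. that for $x\in M_n(\hat{\calO}_{K_\ip})$ one has $\overline{\det(x)}=\det(\bar{x})$ in $\IF_q$, where bars denote reduction mod $\ip$. But this last identity is immediate: the determinant is a polynomial with integer coefficients in the matrix entries, so it commutes with any ring homomorphism applied entrywise, in particular with $\pi_\ip$.

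Assembling these: starting from $a\in\calO$, going right then down gives $\pi_\ip(\nr_{A/K}(a)) = \pi_\ip(\det(\psi(a))) = \det(\overline{\psi(a)})$, while going down then right gives $\det(\phi_p(a)) = \det(\overline{\psi(a)})$ once we identify $\phi_p$ with entrywise reduction of $\psi(a)$ under Lemma \ref{lemma:primesexistence}; these agree. The only genuine points requiring care are (i) that the identification of $\calO/\ip\calO$ with $M_n(\IF_q)$ in the statement is literally the one induced by $\psi$ followed by entrywise reduction — this is forced, since $\psi$ is unique up to inner automorphism by Theorem 2.5(2) and $\det$ is conjugation-invariant, so the diagram is independent of the choice — and (ii) that $\nr_{A/K}$ is compatible with the scalar extension $K\hookrightarrow\hat{K}_\ip$, which is the defining good-behavior property of the reduced norm recorded in \cite[Section 9]{reiner2003maximal}. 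I expect (i), the conjugation-invariance bookkeeping, to be the main (though minor) obstacle, since it is the only place where one must argue that the assertion does not depend on auxiliary choices; everything else is a direct unwinding of definitions.
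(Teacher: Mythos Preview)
Your proposal is correct and follows essentially the same approach as the paper: pass to the $\ip$-adic completion where $A$ splits, use that the reduced norm becomes the determinant there, verify the trivial commutativity $\overline{\det(x)}=\det(\bar{x})$ for matrices over $\hat{\calO}_{K_\ip}$, and handle the identification of $\hat{\calO}_\ip$ with $M_n(\hat{\calO}_{K_\ip})$ via conjugation-invariance of the determinant. The paper's proof is organized slightly differently (it writes the auxiliary diagram \eqref{nrddiagram2} explicitly and separately notes that $\nr_{A/K}(\calO)\subseteq\calO_K$), but the substance is identical.
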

\begin{proof}
  First note that $\nr_{A/K}(a) \in \mathcal{O}_{K}$ for each $a \in \mathcal{O}$ since $\nr_{A/K}(a)\in K$ and $\calO_K$ is integrally closed (see also \cite[(10.1)]{reiner2003maximal}). The reduced norm $\nr_{A/K}(a)$ may be computed as the determinant of the corresponding matrix in $M_n(E)$, where $E$ is a splitting field for $A$ (and is easily seen to be independent of that choice). We may in particular choose $E$ to be the $\ip$-adic completion $\hat{K}_\ip$ since by our assumption we have $A\otimes_K \hat{K}_\ip\cong M_n(\hat{K}_\ip)$. Therefore, in this case commutativity of the following diagram follows immediately: 
\begin{equation}\label{nrddiagram2}
\begin{tikzcd}
M_n(\calO_{\hat{K}_\ip}) \arrow[d, "\tilde{\phi_p}"] \arrow[r, "\det"] & \mathcal{O}_{\hat{K}_\ip} \arrow[d, "\pi_\ip"] \\
\calO_\ip/\ip\calO_\ip\cong M_n(\mathbb{F}_q) \arrow[r, "\det"]                    & \mathbb{F}_q,                    
\end{tikzcd} 
\end{equation}
where $\calO_p$ is the image of $\calO$ in $A\otimes_KK_\ip$. Now it is not necessarily the case that $\calO_\ip=M_n(\calO_{\hat{K}_\ip})$. However, in the complete local case $M_n(\calO_{\hat{K}_\ip})$ is a maximal $\calO_{\hat{K}_\ip}$-order and moreover all the maximal orders are conjugate (see \cite[Theorem 17.3]{reiner2003maximal}). Since the reduced norm is invariant under conjugation we may assume $\nr_{A/K}$ on $\calO$ factors through $M_n(\calO_{\hat{K}_\ip})$. Similarly, we can assume $\phi_p$ factors through $M_n(\calO_{\hat{K}_\ip})$. The commutativity of diagram \eqref{nrddiagram} thus follows from that of \eqref{nrddiagram2}. 

\end{proof}

\begin{lemma}\label{lemma:lowerboundbadpoints}
  Let $A$ be a $\IQ$-division algebra with center $K$, let $\calO$ denote a maximal order and let $\ip$ be a prime of $\calO_K$ at which $A$ is split, so that we have a reduction map $$\phi_p:\calO\to\calO/\ip\calO\cong M_n(\IF_q),$$
  with $n={\sqrt{[A:K]}}$. Let $(\ )^{*}: A_{\mathbb{R}} \rightarrow A_{\mathbb{R}}$ be a positive involution. If $x \in \mathcal{O} \setminus \{ 0\}$ (which we may identify with its image in $A_{\mathbb{R}}$) is such that $\phi_p(x)$ is a non-invertible matrix, then 
   \begin{equation}
       \|x\| \ge  \left( \sqrt{[A:\mathbb{Q}]} \N(a)^{\frac{1}{2[A:\IQ]}} \right)  q^{\frac{1}{\sqrt{[A:K]}[K:\mathbb{Q}]}} .
  \end{equation}
  where $a\in A_{\IR}$ is symmetric positive definite and $\|x\|^2:=\T(x^*ax)$ on $A_{\mathbb{R}}$.
\end{lemma}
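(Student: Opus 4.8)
The plan is to combine the norm--trace inequality (the Corollary following Lemma~\ref{le:normtrace}) with the commutative diagram of Lemma~\ref{lemma:nrdanddet} relating the reduced norm to the determinant modulo $\ip$. First I would record that, since $x\in\calO\setminus\{0\}$, the element $x$ is a unit in $A$ (division algebra!), hence a unit in $A_\IR$, so $x^*ax$ is genuinely symmetric positive definite and the inequality
\[
\frac{1}{d}\,\T(x^*ax)\ \ge\ \N(x)^{2/d}\,\N(a)^{1/d}
\]
from the Corollary applies with $d=\dim_\IR A_\IR=[A:\IQ]$. Writing $\|x\|^2=\T(x^*ax)$, this already gives $\|x\|^2\ge [A:\IQ]\,\N(x)^{2/[A:\IQ]}\N(a)^{1/[A:\IQ]}$, so it remains to bound $\N(x)=\N_{A/\IQ}(x)$ from below by $q^{2/n}$, where $n=\sqrt{[A:K]}$. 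Note $\sqrt{[A:K]}\,[K:\IQ]=n m$ and $[A:\IQ]=n^2 m$, so $q^{2/(nm)}$ raised appropriately will produce the claimed exponent $q^{1/(\sqrt{[A:K]}[K:\IQ])}$ once one takes square roots; I would keep careful track of these exponents at the end.

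The heart of the argument is the lower bound on $\N_{A/\IQ}(x)$. Using the last displayed Lemma of the norm section, $\N_{A/\IQ}(x)=\nr_{A/\IQ}(x)^{\sqrt{[A:K]}}=\N_{K/\IQ}\!\big(\nr_{A/K}(x)\big)^{n}$. Since $\phi_p(x)$ is a non-invertible matrix in $M_n(\IF_q)$, its determinant is $0$; by the commutativity of diagram~\eqref{nrddiagram}, this says $\pi_\ip\big(\nr_{A/K}(x)\big)=0$, i.e.\ $\nr_{A/K}(x)\in\ip$. Because $\nr_{A/K}(x)\in\calO_K$ is nonzero (as $x$ is a unit in $A$, its reduced norm is a nonzero element of $K$, hence of $\calO_K$), the ideal it generates is contained in $\ip$, so $\ip\mid(\nr_{A/K}(x))$ and therefore $|\N_{K/\IQ}(\nr_{A/K}(x))|\ge \N(\ip)=q$. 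Feeding this back, $\N_{A/\IQ}(x)=\N_{K/\IQ}(\nr_{A/K}(x))^{n}\ge q^{n}$ (the reduced norm of a unit has positive field norm by the positivity built into Lemma~\ref{le:normtrace}, or one simply takes absolute values throughout). Hence $\N(x)^{2/[A:\IQ]}\ge q^{2n/(n^2m)}=q^{2/(nm)}$.

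Assembling the pieces: $\|x\|^2\ge [A:\IQ]\,\N(a)^{1/[A:\IQ]}\,q^{2/(nm)}$, and taking square roots gives
\[
\|x\|\ \ge\ \sqrt{[A:\IQ]}\;\N(a)^{1/(2[A:\IQ])}\;q^{1/(nm)},
\]
which is exactly the claimed bound since $nm=\sqrt{[A:K]}\,[K:\IQ]$. I expect the only genuinely delicate point to be the bookkeeping with the three flavors of norm ($\N_{A/\IQ}$, $\nr_{A/K}$, $\N_{K/\IQ}$) and the resulting rational exponents, together with justifying the sign/positivity so that one may pass from ``$\nr_{A/K}(x)\in\ip$ and nonzero'' to the numerical inequality $\N(x)\ge q^{n}$; the structural input (diagram~\eqref{nrddiagram} plus the norm--trace corollary) does essentially all the real work.
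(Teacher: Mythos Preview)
Your proposal is correct and follows essentially the same route as the paper: apply the norm--trace corollary, use the commutative diagram of Lemma~\ref{lemma:nrdanddet} to deduce $\ip\mid\nr_{A/K}(x)$, and then push this through $\N_{K/\IQ}$ and the relation $\N_{A/\IQ}=\nr_{A/\IQ}^{\,n}$ to obtain the lower bound $\N_{A/\IQ}(x)^{2}\ge q^{2n}$. The paper handles the sign issue you flag by working with $\N_{A/\IQ}(x)^{2}\in\IZ_{\ge 0}$ rather than invoking positivity of the reduced norm, but this is a cosmetic difference.
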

  \begin{proof}
    We know from the norm-trace inequality that 
    \begin{align}
      \tfrac{1}{[A:\mathbb{Q}]} \T(x^{*} x ) \ge  \N(x)^{\frac{2}{{[A:\mathbb{Q}]}}} \N(a)^{\frac{1}{[A:\IQ]}}.
    \end{align}
    Since $\det \circ \phi_p(x) = 0$, we get by Lemma \ref{lemma:nrdanddet} that $\ip \mid \nr_{A/K}(x) $ and hence \begin{align*}
   &\N_{K/\mathbb{Q}}(\ip) \mid \N_{K/\mathbb{Q}} \circ \nr_{A/K}(x) \\
   \Rightarrow &N_{K/\mathbb{Q}}(\ip) \mid \nr_{A/\mathbb{Q}}(x)\\ \Rightarrow &N_{K/\mathbb{Q}}(\ip)^{\sqrt{[A:K]}} \mid \N_{A/\mathbb{Q}}(x) \\
   \end{align*}
    Since $\N_{A/\mathbb{Q}}(x)^2 \in \mathbb{Z}_{\ge 0}$ and $x \neq 0$, this gives us a lower bound on $\N_{A/\mathbb{Q}}(x)^2$. Thus we have that 
    \begin{align}
      \tfrac{1}{[A:\mathbb{Q}]} \T(x^{*}a x ) \ge  \N_{K/\mathbb{Q}}(\ip)^{\frac{2\sqrt{[A:K]}}{{[A:\mathbb{Q}]}}} \N(a)^{\frac{1}{[A:\IQ]}} .
    \end{align}
    which proves the claim given that $\N_{K/\mathbb{Q}}(\ip)=\vert\calO_K/\ip\vert=q$.
  \end{proof}
  We also record the following result: 
\begin{lemma}\label{lemma:packingradius}
Let $A$ be a central simple division $K$-algebra for $K$ a number field and let $\calO$ be a maximal $\calO_K$-order which we identify with the corresponding lattice in $A_\IR=A\otimes_\IQ\IR$. Then, with respect to any quadratic form $q_a(x)=\T(x^*ax)$ for a symmetric positive definite element $a \in A_\IR$, one may define the shortest vector length $\lambda_{1,q_a}$, Hermite parameter $\gamma_{q_a}$ and covering radius $\tau_{q_a}$, and they are subject to the following:
\begin{enumerate}
    \item The shortest vector length satisfies $\lambda_{1,q_a}(\calO)\geq \sqrt{[A:\IQ]}\cdot \N(a)^{1/(2[A:\IQ])}$.
    \item For any two-sided $\calO$-ideal I (by which we mean a full $\calO_K$-lattice in $\calO$), the Hermite parameter satisfies
    $$\gamma_{q_a}(I)\geq \frac{[A:\IQ]}{d(\calO/\IZ)^{1/[A:\IQ]}}$$
    \item The covering radius satisfies 
    $$\tau_{q_a}(\calO)\leq d(\calO/\IZ)^{1/[A:\IQ]}\cdot \left(\frac{\sqrt{[A:\IQ]}}{2\pi}+\frac{3}{\pi}\right)\cdot \N(a)^{-1/(2[A:\IQ])},$$
where $d(\calO/\IZ)$ denotes the discriminant of $\calO$ computed with respect to a $\IZ$-basis. 
\end{enumerate}
\end{lemma}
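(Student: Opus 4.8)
The three assertions are essentially separate, and I would treat them in order, leveraging throughout the identification of $\calO \subseteq A_\IR$ as a Euclidean lattice via $q_a$ and the norm–trace machinery already in place.

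For (1): let $x \in \calO \setminus \{0\}$. Since $\N_{A/\IQ}(x)^2 \in \IZ_{\geq 0}$ is a norm of the left-multiplication map and $x \neq 0$ forces $\N_{A/\IQ}(x) \neq 0$ (as $A$ is a division algebra, left multiplication by $x$ is invertible over $A_\IR$), we get $\N_{A/\IQ}(x)^2 \geq 1$, hence $\N(x) = |\N_{A/\IQ}(x)| \geq 1$. Feeding this into the Corollary to Lemma \ref{le:normtrace} with $d = [A:\IQ]$ gives
$$
\tfrac{1}{[A:\IQ]} q_a(x) = \tfrac{1}{[A:\IQ]}\T(x^*ax) \geq \N(x)^{2/[A:\IQ]}\N(a)^{1/[A:\IQ]} \geq \N(a)^{1/[A:\IQ]},
$$
so $q_a(x) \geq [A:\IQ]\cdot \N(a)^{1/[A:\IQ]}$, i.e. $\|x\| \geq \sqrt{[A:\IQ]}\cdot \N(a)^{1/(2[A:\IQ])}$; taking the infimum over $x$ proves (1). (This is the same computation as the first display in the proof of Lemma \ref{lemma:lowerboundbadpoints}, just without the extra divisibility from $\ip$.)

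For (2): recall $\gamma_{q_a}(I) = \lambda_{1,q_a}(I)^2 / \Vol(I)^{2/[A:\IQ]}$. A two-sided ideal $I$ is in particular a full $\calO_K$-lattice, hence a finite-index sublattice of $\calO$ after scaling, but more usefully $I$ is itself a left $\calO$-module and the argument of (1) applies verbatim to any nonzero $x \in I$ (we only used $x \in A$, $x \neq 0$), giving $\lambda_{1,q_a}(I) \geq \sqrt{[A:\IQ]}\,\N(a)^{1/(2[A:\IQ])}$. For the covolume, $\Vol(\calO)$ with respect to $q_a$ is $\det(q_a|_{\calO})^{1/2}$; when $a = 1$ this is $|d(\calO/\IZ)|^{1/2}$ by definition of the discriminant with respect to the trace form $\T(x^*y) = \langle x,y\rangle$, and for general symmetric positive definite $a$ one has $\Vol_{q_a}(\calO) = \N(a)^{1/2}\,|d(\calO/\IZ)|^{1/2}$ since multiplication by (a square root of) $a$ has determinant $\N(a)^{1/2}$ on $A_\IR$ — here I would cite or reprove the standard fact that the Gram determinant of $q_a$ relative to $q_1$ is $\N(a)$. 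Then
$$
\gamma_{q_a}(\calO) \geq \frac{[A:\IQ]\,\N(a)^{1/[A:\IQ]}}{\big(\N(a)^{1/2} d(\calO/\IZ)^{1/2}\big)^{2/[A:\IQ]}} = \frac{[A:\IQ]}{d(\calO/\IZ)^{1/[A:\IQ]}},
$$
and the same bound for a general two-sided ideal $I$ follows because $\gamma_{q_a}$ is a similarity invariant and, after rescaling, $I$ is a sublattice whose Hermite invariant is at least that of its shortest vector over its own covolume — more cleanly: one shows $\lambda_{1,q_a}(I)^2/\Vol(I)^{2/[A:\IQ]} \geq \lambda_{1,q_a}(\calO)^2/\Vol(\calO)^{2/[A:\IQ]}$ fails in general, so instead I would bound $\Vol(I)$ directly. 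Actually the cleanest route: any two-sided ideal $I$ satisfies $\ip^k \calO \subseteq I \subseteq \calO$ for suitable data and the ideal norm relates $\Vol(I)$ to $\Vol(\calO)$ and to $\lambda_1$ multiplicatively; I expect this bookkeeping — matching the power of $\N(a)$ and the ideal-norm contribution so they cancel against the improved $\lambda_1$ — to be the fiddly part, and this is where I would be most careful.

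For (3): the covering radius $\tau_{q_a}(\calO)$ is bounded by any quantity $R$ such that the balls of radius $R$ centered at lattice points cover $A_\IR$. The standard estimate is $\tau \leq \tfrac12 \sum \|b_i\|$ for a $\IZ$-basis $b_i$ of $\calO$, but to get the stated shape with the $\sqrt{[A:\IQ]}/(2\pi) + 3/\pi$ factor I would instead use a transference-type bound: $\tau_{q_a}(\calO) \leq \tfrac{1}{2}\sqrt{[A:\IQ]}\cdot \mu(\calO^\vee)^{-1}\cdot(\text{something})$, or more directly an explicit bound in terms of the covolume alone via the fact that a ball of volume $> 2^{[A:\IQ]}\Vol(\calO)$ contains a lattice point (Minkowski), combined with the successive-minima estimates. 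Concretely, I would (a) bound $\tau$ in terms of $\lambda_{[A:\IQ]}$ (the last successive minimum) via $\tau \leq \tfrac12\sqrt{\sum \lambda_i^2} \leq \tfrac12\sqrt{[A:\IQ]}\,\lambda_{[A:\IQ]}$, (b) bound $\lambda_{[A:\IQ]}$ using Minkowski's second theorem $\prod \lambda_i \leq 2^{[A:\IQ]}\Vol(\calO)/V_{[A:\IQ]}$ together with the lower bound on $\lambda_1$ from (1) to control all the other $\lambda_i$ from below, where $V_d$ is the volume of the unit ball, and (c) estimate $V_d^{1/d} \sim \sqrt{2\pi e/d}$ and $\Vol_{q_a}(\calO) = \N(a)^{1/2}d(\calO/\IZ)^{1/2}$ as in (2). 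The constants $\tfrac{1}{2\pi}$ and $\tfrac{3}{\pi}$ should drop out of a careful but elementary estimate of $V_d^{-1/d}$ against $\sqrt{d}$; I would look to \cite{gargava2021lattice} for the precise form, since this kind of explicit covering-radius bound is exactly what is needed there too. The main obstacle across the whole lemma is purely the constant-tracking in (3) and the $\N(a)$/ideal-norm bookkeeping in (2); conceptually everything reduces to the norm–trace inequality plus classical geometry of numbers.
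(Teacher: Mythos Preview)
Your treatment of (1) is fine and matches the paper.

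For (2), the ``fiddly bookkeeping'' you flag is actually the substantive point, and your sketch does not contain the missing idea. The bound $\lambda_{1,q_a}(I)\geq \sqrt{[A:\IQ]}\,\N(a)^{1/(2[A:\IQ])}$, which is all your argument gives, is independent of $I$ and cannot compensate for the growth of $\Vol(I)$; so as written your inequality for $\gamma_{q_a}(I)$ fails for ideals of large index. What the paper does is sharpen the norm--trace inequality using the arithmetic of $I$: for $x\in I$ one has $\mathcal N_\IQ(I)\mid \N_{A/\IQ}(x)$, hence $|\N_{A/\IQ}(x)|\geq \mathcal N_\IQ(I)$, and combining this with $\det_{q_a}(I)=\det_{q_a}(\calO)\cdot \mathcal N_\IQ(I)^2$ makes the ideal-norm contributions cancel exactly. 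Your phrase ``the ideal norm relates $\Vol(I)$ to $\Vol(\calO)$ and to $\lambda_1$ multiplicatively'' is pointing in the right direction, but the proof hinges on this divisibility fact, which you do not state.

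For (3), your Minkowski-second-theorem route is the wrong approach and will not produce the constants $\tfrac{1}{2\pi}$ and $\tfrac{3}{\pi}$; those do not arise from any Stirling estimate on $V_d^{-1/d}$. The paper instead uses the Banaszczyk--Miller transference inequality
\[
\lambda_{1,q_a}(\calO^\sharp)\cdot\tau_{q_a}(\calO)\leq \frac{[A:\IQ]}{2\pi}+\frac{3\sqrt{[A:\IQ]}}{\pi},
\]
identifies the dual lattice $\calO^\sharp$ with (a twist of) the inverse different $\mathfrak D(\calO/\IZ)^{-1}$, observes that this is a two-sided fractional $\calO$-ideal, and then \emph{applies part (2) to it} to bound $\lambda_{1,q_a}(\calO^\sharp)$ from below. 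Dividing gives exactly the stated bound. This also explains why (2) is formulated for arbitrary two-sided ideals rather than just $\calO$: it is there precisely to feed into (3). You briefly mention a transference-type bound but abandon it; that is in fact the whole argument.
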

\begin{proof}The first part follows trivially from the norm-trace inequality. 
For the second statement, we know $I$ also yields a lattice in $A_\IR$ and, following \cite[Section 4]{BAYERFLUCKIGER2006305}, we set $\beta_{I,q_a}(x):=\frac{q_a(x)}{\det_{q_a}(I)^{1/[A:\IQ]}}$, where by definition $\det_{q_a}(I)$ is the determinant of a matrix of $q_a$ in a $\IZ$-basis of the ideal lattice $I$. Note that by definition $\gamma_{q_a}(I)=\min_{x\in I}\beta_{I,q_a}(x)$. By the norm-trace inequality, we have that 
$\N_{A/\IQ}(x)^2\leq (q_a(x)/[A:\IQ])^{[A:\IQ]}\cdot \N(a)^{-1}$. Therefore, since the discriminant of $\calO$ satisfies $d(\calO/\IZ)=\det_{q_{1}}(\calO)=\N(a)^{-1}\cdot \det_{q_a}(\calO)$, we obtain the inequality:
$$\N_{A/\IQ}(x)^2\leq \left(\frac{\beta_{\calO,q_a}(x)}{[A:\IQ]}\right)^{[A:\IQ]}\cdot d(\calO/\IZ)^2.$$
Moreover, letting $\mathcal{N}(I)$ denote the norm of an $\calO$-ideal as in \cite[Section 24]{reiner2003maximal} and $\mathcal{N}_\IQ(I)$ the number representing the fractional $\IZ$-ideal generated by $\N_{A/\IQ}(x)$ for $x\in I$ (see \cite[Theorem 24.9]{reiner2003maximal}), we have that 
$\det_{q_a}(I)=\det_{q_a}(\calO)\cdot \mathcal{N}_\IQ(I)^2$. 
Putting these together we deduce:
\begin{equation}\label{eq:betahermitebound}
    \beta_{I,q_a}(x)=\beta_{\calO,q_a}(x)\cdot \mathcal{N}_\IQ(I)^{-2/[A:\IQ]}\geq \frac{[A:\IQ]}{d(\calO/\IZ)^{1/[A:\IQ]}}\cdot\left(\frac{\N_{A/\IQ}(x)}{\mathcal{N}_\IQ(I)}\right)^{2/[A:\IQ]}\end{equation}
Taking the minimum over $x\in I$ of equation \eqref{eq:betahermitebound} yields the result, since for any $x\in I$ we have that $\N_{A/\IQ}(x)/\mathcal{N}_\IQ(I)\geq 1$.\par
For the third statement, we use a transference theorem: indeed, denoting by $\calO^\sharp$ the dual lattice, we have that 
$$\lambda_{1,q_a}(\calO^\sharp)\cdot \tau_{q_a}(\calO)\leq \frac{[A:\IQ]}{2\pi}+\frac{3\sqrt{[A:\IQ]}}{\pi}$$
by a refinement \cite[(1.9)]{Miller2019KissingNA} of a result of Banaszczyk \cite{Banaszczyk1993}.
But under the choice of inner product $\langle x,y\rangle=\T(x^*ay)$ corresponding to $q_a$, the dual lattice $\calO^\sharp$ is just $a^{-1}$ times the image under the involution $(-)^*$ of the inverse different $\mathfrak{D}(\calO/\IZ)^{-1}=\{x\in A:\tr_{A/\IQ}(x\cdot \calO)\subset \IZ\}$ and therefore has the same parameters. Given that $a^{-1}\mathfrak{D}(\calO/\IZ)^{-1}$ is a two sided (non-integral) $\calO$-ideal, by the second statement we deduce
$$\lambda_{1,q_a}(\calO^\sharp)=\lambda_{1,q_a}(a^{-1}\mathfrak{D}(\calO/\IZ)^{-1})\geq \sqrt{[A:\IQ]} \left(\frac{\det_{q_a}(a^{-1}\mathfrak{D}(\calO/\IZ)^{-1})}{d(\calO/\IZ)}\right)^{1/(2[A:\IQ])}.$$
Since $\det_{q_a}(a^{-1}\mathfrak{D}(\calO/\IZ)^{-1})=\det_{q_a}(\calO^\sharp)=\det_{q_a}(\calO)^{-1}$, we obtain that $$\lambda_{1,q_a}(\calO^\sharp)\geq\sqrt{[A:\IQ]}\cdot d(\calO/\IZ)^{-1/[A:\IQ]}\cdot \N(a)^{1/(2[A:\IQ])}.$$
The result follows by the transference theorem. 
\end{proof}



We now have the tools to tackle in the next section the first main result of this paper. 

\section{A general averaging result for lifts}\label{sec:three}

Again, let $A$ denote a central simple $K$-algebra for $K$ a number field and assume $A$ is a division ring. We set $n^2=[A:K]$ and $m=[K:\IQ]$. Let $\calO$ denote an order in $A$. The purpose of this section is to prove a discrete version of a Siegel mean value theorem (see \cite{SiegelMVT,VenkateshBounds,gargava2021lattice}) for $\calO$-lattices obtained via lifts of codes in characteristic $p$ for increasingly large primes $p$. In order to facilitate discussion and comparison with previous results, we will state a more abstract and flexible version of the main averaging result and later explain the most convenient choices of parameters. \par

Assume we are given for a fixed integer $t\geq 2$ a family of surjective homomorphisms 
$$\phi_p: \calO^t \to \calR_p^t,$$
indexed by primes $p$, where $\calR_p$ is a finite $\IF_p$-algebra of fixed $\IF_p$-rank $d_\calR$ and where the sizes of $p,\calR_p$ are unbounded in the family. We also fix an embedding $i: A\hookrightarrow \IR^{n^2m}$ which identifies $\calO^t$ with a lattice in $\IR^{n^2mt}$. 
Assume we are given subsets $U_p\subset \calR_p^t$ and sets of $\calR_p$- submodules $\calC_k$ of $\calR_p^t$ of fixed $\IF_p$-dimension $d(k)< t\cdot d_\calR$ (which we refer to as generalized $k$-dimensional codes) such that the following two conditions hold: 
\begin{enumerate}
    \item ($U$-balancedness): For any fixed $p$ in the family and $x\in U_p$, the number of $k$-dimensional codes in $ \mathcal{C}_k$ containing $x$ is constant (we denote this constant $L_{U_p}$). 
    \item (non-degeneracy): There exists a constant $c_U>0$ and $s>\frac{d_\calR\cdot t-\max_k d(k)}{mn^2t}$ such that 
    $$\|i(x)\|\geq c_U\cdot p^s \text{ for any nonzero }x\in \phi_p^{-1}(\calR_p^t\setminus U_p).$$
    We also require the mild condition that $\lim_{p\to\infty}\frac{\vert U_p\vert}{\vert R_p\vert^t}=1$.
    \end{enumerate}
We can then prove an averaging result for lifts in this general setup. Denote within this setup the set of scaled lattices
$$\mathbb{L}_p=\{\beta\phi_p^{-1}(C):C\in \calC_k\},$$ 
where $\beta$ is chosen such that all the lattices in $\mathbb{L}_p$ have volume $V:=\Vol(\calO^t)$, ergo $\beta=(\frac{p^{d(k)}}{\vert R_p\vert^t})^{1/n^2mt}$. 
Finally, we define following \cite[Def 2]{CampelloRandom}: 
\begin{definition}
  A function $f:\IR^d\to \IR$ is called semi-admissible if $f$ is Riemann-integrable and there exist positive constants $b,\delta>0$ such that 
  $$\vert f(\mathbf{x})\vert\leq\frac{b}{(1+\|\mathbf{x}\|)^{d+\delta}}\text{ for all }\mathbf{x}\in \IR^d.$$
\end{definition}
We then have the following: 
\begin{theorem}\label{thm:mainaverage}
Let $\calC_k$ be the set of codes of fixed $\IF_p$-dimension  $d(k)$ and let $f:\IR^{n^2mt}\to \IR$ be a semi-admissible function for $t\geq 2$.
Under the setup and notations above, assume the $U$-balancedness and non-degeneracy conditions are satisfied. Then, provided $d_\calR\cdot t>d(k)>t\cdot(d_\calR-s(n^2m))$, we have: 
$$\lim_{p\to\infty}\mathbb{E}_{\mathbb{L}_p}\left(\sum_{x\in (\beta\phi_p^{-1}(C))'}f(i(x))\right)\leq (\zeta(n^2mt)\cdot V) ^{-1}\cdot \int_{\IR^{n^2mt}}f(x)dx,$$
where $p$ ranges over the primes in the family and where for $\Lambda\in \mathbb{L}_p$ we denote by $\Lambda'$ the primitive vectors in $\Lambda$ (=shortest vectors of the lattice on the real line they span). 
\end{theorem}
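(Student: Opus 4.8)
I would run the classical mean-value computation in the Siegel--Minkowski--Hlawka tradition (Rogers, Venkatesh, and in the coding-theoretic form Moustrou \cite{MoustrouCodes} and Campello \cite{CampelloRandom}): unfold the average over $\calC_k$ into a sum over the vectors of $\calO^t$, isolate the contribution of the vectors lying over $U_p$ (where $U$-balancedness pins down the combinatorics), and show the remaining ``bad'' contribution is negligible by non-degeneracy. Since $\calR_p$ is killed by $p$ one has $p\calO^t\subseteq\ker\phi_p\subseteq\phi_p^{-1}(C)\subseteq\calO^t$ with index $[\calO^t:\phi_p^{-1}(C)]=|\calR_p|^t/p^{d(k)}=\beta^{-n^2mt}$ for every $C\in\calC_k$. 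The first technical point is to decide which $y\in\calO^t$ can occur (after scaling by $\beta$) as a primitive vector of some $\phi_p^{-1}(C)$: writing $c$ for the largest positive integer with $y\in c\calO^t$, a short elementary argument (if a prime $\ell\mid c$ with $\ell\neq p$, or $p^2\mid c$, one exhibits $y/\ell$ resp.\ $y/p$ in $\phi_p^{-1}(C)$) shows $c\in\{1,p\}$; moreover if $c=1$ then $y$ is automatically primitive in $\phi_p^{-1}(C)$ whenever $\phi_p(y)\in C$, and if $y=py_0$ with $y_0$ primitive in $\calO^t$ then $y$ is primitive in $\phi_p^{-1}(C)$ precisely when $\phi_p(y_0)\notin C$. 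Averaging over $C$ and interchanging sums therefore expresses $\mathbb{E}_{\mathbb{L}_p}\big(\sum_{x\in(\beta\phi_p^{-1}(C))'}f(i(x))\big)$ as a sum over $\{y\in\calO^t:y\text{ primitive}\}$ in which $f(\beta i(y))$ is weighted by $\#\{C:\phi_p(y)\in C\}/|\calC_k|$ and $f(\beta p\,i(y))$ by $1-\#\{C:\phi_p(y)\in C\}/|\calC_k|$.

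\textbf{The code count.} For $\phi_p(y)\in U_p$ the weight $\#\{C:\phi_p(y)\in C\}/|\calC_k|$ equals $L_{U_p}/|\calC_k|$ by $U$-balancedness, and the double count $L_{U_p}|U_p|=\sum_C|C\cap U_p|\le|\calC_k|\,p^{d(k)}$ together with $|U_p|/|\calR_p|^t\to1$ yields $L_{U_p}/|\calC_k|\le(1+o(1))\,p^{d(k)}/|\calR_p|^t=(1+o(1))\beta^{n^2mt}$ --- the ``density'' of a lattice in $\mathbb{L}_p$; this upper bound is all that is needed for the claimed inequality (for $f\ge0$, which is the relevant case; in general one also wants the matching lower bound, obtained by bounding $\sum_{v\notin U_p}\#\{C:v\in C\}$). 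The same count bounds $\#\{C:\phi_p(y)\in C\}/|\calC_k|$ by $(1+o(1))\beta^{n^2mt}$ for any $y$ with $\phi_p(y)\notin U_p\cup\{0\}$; for $\phi_p(y)=0$ the weight is $1$, but then $y$ lies in $\ker\phi_p$, whose nonzero vectors all have norm $\ge c_Up^s$ by non-degeneracy (here one uses $0\notin U_p$, which is forced, since otherwise $U$-balancedness at $0$ would give the absurdity $L_{U_p}=|\calC_k|$).

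\textbf{The bad terms (the crux).} The terms with $\phi_p(y)\notin U_p$, together with all the content-$p$ terms $f(\beta p\,i(y_0))$, form the ``bad'' part, and the heart of the proof is that this part is $o(1)$ as $p\to\infty$. The decisive observation is that one must pass to the expectation over codes \emph{before} estimating: afterwards every offending $y\neq0$ either lies in $\ker\phi_p$ or is weighted by at most $(1+o(1))\beta^{n^2mt}$, and in every case $\|\beta i(y)\|\ge\beta c_Up^s=:R_p$. Using the elementary point count $\#(\Lambda\cap B_R)\le(1+2R/\lambda_1(\Lambda))^{n^2mt}$ and the semi-admissible decay $|f(z)|\le b(1+\|z\|)^{-(n^2mt+\delta)}$, the $\ker\phi_p$-contribution is $\ll R_p^{-(n^2mt+\delta)}=(\beta c_Up^s)^{-(n^2mt+\delta)}$, and the $\phi_p(y)\notin U_p\cup\{0\}$-contribution is $\ll\beta^{n^2mt}\cdot\lambda_1(\calO^t)^{-n^2mt}\beta^{-n^2mt}R_p^{-\delta}$; substituting $\beta=p^{(d(k)-d_\calR t)/(n^2mt)}$ turns both into powers of $p$ whose exponents are negative \emph{precisely} because $s>\tfrac{d_\calR t-d(k)}{n^2mt}$, i.e.\ $d(k)>t(d_\calR-s(n^2m))$. (A naive union bound over all bad lattice points, \emph{without} first averaging over codes, would carry the harmful factor $\beta^{-n^2mt}$ instead of $\beta^{+n^2mt}$ and would fail for small $\delta$; this is the subtle point.) Finally, since $p\calO^t\subseteq\phi_p^{-1}(\calR_p^t\setminus U_p)$ and the shortest nonzero vector of $p\calO^t$ has norm $p\,\lambda_1(\calO^t)$, non-degeneracy forces $p\,\lambda_1(\calO^t)\ge c_Up^s$ for all large $p$, whence $s\le1$ and $\beta p=p^{1+(d(k)-d_\calR t)/(n^2mt)}\to\infty$; so $\beta p\calO^t$ is very sparse and $\sum_{z\in\beta p\calO^t\setminus\{0\}}|f(i(z))|\ll(\beta p\,\lambda_1(\calO^t))^{-(n^2mt+\delta)}\to0$, disposing of the content-$p$ terms.

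\textbf{The main term and conclusion.} What remains is $(L_{U_p}/|\calC_k|)\sum_{y\in\calO^t\ \text{prim},\ \phi_p(y)\in U_p}f(\beta i(y))$; dropping the constraint $\phi_p(y)\in U_p$ only reintroduces bad-primitive terms already shown to vanish. By Möbius inversion over $\calO^t$ (using $\mathbb{1}[y\text{ primitive}]=\sum_{j:\,y\in j\calO^t}\mu(j)$), $\beta^{n^2mt}\sum_{y\in(\calO^t)'}f(\beta i(y))=\sum_{j\ge1}\tfrac{\mu(j)}{j^{n^2mt}}\,(j\beta)^{n^2mt}\sum_{y\in\calO^t\setminus\{0\}}f(j\beta i(y))$, and for each fixed $j$ the standard Riemann-sum convergence for semi-admissible $f$ gives $(j\beta)^{n^2mt}\sum_{y\in\calO^t\setminus\{0\}}f(j\beta i(y))\to V^{-1}\int_{\IR^{n^2mt}}f$ as $\beta\to0$; a uniform-in-$j$ majorant $\ll j^{-n^2mt}$ (from sparseness when $j\beta$ is large, from a genuine Riemann sum when $j\beta$ is bounded) permits term-by-term passage to the limit, producing $\big(\zeta(n^2mt)V\big)^{-1}\int f$. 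Multiplying by $L_{U_p}/|\calC_k|\le(1+o(1))\beta^{n^2mt}$ then yields the asserted inequality. The genuine obstacle throughout is the bad-term estimate above: arranging the exponent bookkeeping so that it closes exactly at the threshold $d(k)>t(d_\calR-s(n^2m))$, which is what dictates averaging over codes before taking absolute values.
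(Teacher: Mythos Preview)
Your overall strategy matches the paper's: split the lattice sum into ``good'' vectors with $\phi_p(y)\in U_p$ (handled by $U$-balancedness) and ``bad'' vectors (handled by non-degeneracy), then pass to a Riemann sum via M\"obius inversion for the main term. Your content-$\{1,p\}$ analysis of primitive vectors is correct and more explicit than the paper, which simply notes that a primitive $x\in\phi_p^{-1}(C)$ with $\phi_p(x)\in U_p$ (hence $\phi_p(x)\neq 0$) is automatically primitive in $\calO^t$, and dumps everything else into the bad part.

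There is, however, a genuine gap in your bad-term estimate. You assert that ``the same count'' bounds $\#\{C:\phi_p(y)\in C\}/|\calC_k|$ by $(1+o(1))\beta^{n^2mt}$ whenever $\phi_p(y)\notin U_p\cup\{0\}$. The $U$-balancedness hypothesis says nothing about such $y$: the double count $L_{U_p}\,|U_p|\le |\calC_k|\,p^{d(k)}$ only controls the weight on $U_p$. In the paper's concrete setup (codes $C\cong V^{\oplus k}$ inside $V^{\oplus nt}$ for $V=\IF_q^n$ the simple $M_n(\IF_q)$-module), a vector $v$ with $\dim_{\IF_q}(\calR_p v)<n^2$ generates a \emph{smaller} submodule and is therefore contained in \emph{more} codes than a vector in $U_p$, not fewer --- indeed Corollary~\ref{cor:balancedness} shows the count is $|\calC_{k-n_1,nt-n_1}|$, which increases as $n_1$ decreases. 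So the weight you need is simply not available from the stated hypotheses, and your bound $\beta^{n^2mt}\cdot\beta^{-n^2mt}R_p^{-\delta}=R_p^{-\delta}$ for this piece collapses.

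The paper handles the bad part by the very route you dismiss: it bounds the bad sum \emph{lattice by lattice} (effectively taking weight $\le 1$), observing that every bad $x\in\Lambda'$ satisfies $\|i(x)\|\ge R_p\to\infty$ and appealing to the decay of $f$ (``dominated convergence''). Your worry that this ``would fail for small $\delta$'' has some force if one naively bounds by summing over all of $\beta\calO^t$, and the paper is admittedly terse here; but this is nonetheless the argument the paper gives, and it is unproblematic for the compactly supported $f$ used in all the applications. Your attempted repair via averaging first does not work as written because the key weight inequality is false.
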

We note that we set up the non-degeneracy condition so that in the very least the largest dimension $d(k)$ possible for the appropriate notion of $k$-dimensional code which is strictly contained in $\calR_p^t$ satisfies the condition $d_\calR\cdot t>d(k)>t\cdot(d_\calR-s(n^2m))$ and so the result is not vacuous.
\begin{proof}
We split the expected value of the lattice sum into two parts: 
First, we show that the expected value of the term 
$$\sum_{x\in (\beta\phi_p^{-1}(C))', \phi_p(x/\beta)\in (\calR_p^t\setminus U_p)}f(i(x))=\sum_{x\in (\phi_p^{-1}(C))', \phi_p(x)\in (\calR_p^t\setminus U_p)}f(\beta i(x))$$
tends to zero in absolute value as $p\to \infty$. By the non-degeneracy condition,  we can bound 
\begin{equation}\label{eq:lowerboundnorminthm}
\|\beta i(x)\|\geq \beta \cdot c_u\cdot p^s=c_u\cdot p^{(d(k)-d_\calR\cdot t)/(mn^2t)}p^s
\end{equation}

which under our assumption on $d(k)$ becomes arbitrarily large as $p\to \infty$. Since $f$ decays rapidly at infinity we get for each individual lattice in $\mathbb{L}_p$ that the sum converges to $0$ as $p\to \infty$ by dominated convergence, and therefore idem for the average.\par
The remaining terms can be bounded on average via $U$-balancedness. Indeed, let $g:R_p^t\to \IR^+$ denote any function. We have that 
\begin{align*}
    \mathbb{E}_\calC[\sum_{c\in \calC\cap U_p}g(c)]&=\sum_{x\in U_p}\mathbb{E}_\calC[g(x)\mathbf{1}_C(x)]\\
    &=\sum_{x\in U_p} g(x) \frac{L_{U_p}}{\vert \calC\vert}\\
    &\leq \sum_{x\in U_p} g(x) \frac{p^{d(k)}}{\vert U_p \vert},
\end{align*}
where we use $U$-balancedness as well as a counting argument for the inequality and where the expected value is taken as average over all codes in $\calC$. We deduce the inequality 
$$ \mathbb{E}[\sum_{x\in(\phi_p^{-1}(C))',\phi_p(x)\in U_p}f(\beta i(x))]\leq\frac{p^{d(k)}}{\vert U_p \vert}\sum_{x\in (\calO^t)'} f(\beta i(x)).$$

By the M\"obius inversion formula (and exchanging summation and limits by dominated convergence when $f$ does not have bounded support), the latter equals 
$$\sum_{r=1}^\infty \frac{\mu(r)}{r^{n^2mt}}\sum_{x\in \calO^t\setminus \{0\}}\frac{p^{d(k)}}{\vert U_p \vert}r^{n^2mt}f(r\beta i(x)).$$
The result now follows: first, note that we have for fixed $r\in \IN$ by approximation of the Riemann integral of $f$ that 
$$\lim_{p\to\infty}\sum_{x\in \calO^t\setminus \{0\}}(r\beta)^{n^2mt}f(r\beta i(x))=V^{-1}\int_{\IR^{n^2mt}}f(x)dx$$
since $\beta\to 0^+$ as $p$ becomes large. Moreover, by the second part of the non-degeneracy assumption the ratio $\frac{\vert U_p\vert}{\vert R_p\vert^t}\to 1$, so that $\frac{p^{d(k)}}{\vert U_p \vert}$ approaches $\beta^{n^2mt}$ as $p\to \infty$. 
Finally, switching the limit in $p$ and summation in $r$ is allowed by dominated convergence, as $f$ decays rapidly.  
\end{proof}
We now turn to examples of such a setup. \begin{example}
 In the case where $n=1$ and $\calO$ is just the ring of integers $\calO_K$ of a number field $K$, we can for example take $\phi_p$ to be the reduction map modulo a prime $\ip\vert p$ of $\calO_K$ for (the infinitely many) primes $p$ that split completely in $\calO_K$. In this case, we get $\calR_p=\IF_p$ and take $U_p$ to be the complement in $\calR^t$ of $(\IF_p\setminus\IF_p^\times)^t$. With the usual definition of $k$-dimensional codes as free rank $k$ $\IF_p$-submodules of $\IF_p^t$, the balancedness condition is satisfied (see e.g., \cite[Lemma 1]{Loeliger97averagingbounds}), and moreover the non-degeneracy result is straightforward in this case: nonzero elements $x\in\ip \calO_K$ have algebraic norm
    $$N_{K/\IQ}(x)\in p\IZ\setminus \{0\},$$ 
    but the norm is just the product of the $n$ embedded coordinates of $i(x)$, so by the geometric/arithmetic mean inequality we obtain
    $$\|i(x)\|\gg  p^{1/n}$$
    and non-degeneracy is satisfied. The condition on $k$ in Theorem \ref{thm:mainaverage} simply becomes $k>0$ and this recovers the construction of \cite{CampelloRandom} in the number field case. 
    \end{example}
   \begin{example}\label{example:firstresult}
    Let now $n>1$ so that we are in the non-commutative situation. The most straightforward generalization to division algebras $A$ with center $K$ of the number field results is the following: consider primes $\ip\vert p$ of $\calO_K$ that split $A$. Let $\IF_q\cong \calO_K/\ip$ denote the residue field. We then have an infinite family of ring isomorphisms 
    $$\phi_p: \calO/\ip\calO\to M_n(\IF_q)$$ 
    for a maximal order $\calO$ in $A$. We take $t$ copies of these to build our reduction map. In order to make sure the multiplicative structure is preserved as well, it makes sense to set $\calR_p:=M_n(\IF_q)$ and to consider codes which are free $\calR_p$-submodules of $\calR_p^t$ of rank $k$. It also seems natural to define $U_p$ to be the complement in $\calR_p^t$ of 
        $$(\calR_p^t\setminus U_p):=((M_n(\IF_q)\setminus \GL_n(\IF_q))^t.$$
        It then follows that the $U$-balancedness condition is again satisfied (see the proof of \cite[Lemma 2]{CampelloRandom}).
    We get a non-degeneracy result from Lemma \ref{lemma:lowerboundbadpoints}: indeed we obtain for a nonzero vector $a\in \calO^t$ that maps to $\calR_p^t\setminus U_p$ the lower bound 
    \begin{equation}\label{eq:lowerboundnorm}
        \|i(a)\|\gg  q^{\frac{1}{nm}}
    \end{equation}
    Moreover, as $\vert \GL_n(\IF_q)\vert=\prod_{i=0}^{n-1}(q^n-q^i)$ we also have that $\frac{\vert U_p\vert}{\vert \calR_p\vert^t}\to 1$ as $p\to \infty$. Since the largest possible $\IF_q$-dimension of a code is $n^2(t-1)$, tracing through the definitions we have non-degeneracy exactly when $t>n$, in which case via Theorem \ref{thm:mainaverage} we obtain the averaging result by pulling back codes which are free $M_n(\IF_q)$-modules of rank $k$ for $t>k>t-t/n$. In particular, when $n=2$ we recover in the special case of the quaternion algebra $A=\left(\frac{-1,-1}{\IQ}\right)$ the results of \cite[Theorem 3]{CampelloRandom} for the Lipschitz integers as well as for the maximal order of Hurwitz integers. We note that we recover the condition $k>t/2$ appearing in \cite[Theorem 3]{CampelloRandom}. So this does give a generalization to arbitrary $\IQ$-division algebras when $t>n$, however it would be more convenient to obtain results that work for arbitrary $t\geq 2$.

\end{example}

In order to obtain averaging results that do not require the condition $t>n$, one would have to obtain a stronger non-degeneracy lower bound on the norm or relax the condition that the codes be free $\calR_p=M_n(\IF_q)$-modules. Since the norm-trace inequality and the lower bound are sharp, we focus on the latter. However, when relaxing the definition of codes, one has to be careful to preserve the $U$-balancedness condition and to preserve enough multiplicative structure so that the units $\calO^\times$ act on the lattices and we obtain improved bounds on the packing density. This is achieved by taking as $k$-dimensional codes $k$ copies of the simple left $M_n(\IF_q)$-module $V=\IF_q^n$ for $n\leq k< tn$. This carries enough structure through to the lattices in $\mathbb{L}_p$ for the intended applications. Moreover, the $U$-balancedness result is a special case of the following, which we formulate in representation-theoretic terms: 
\begin{lemma}
   Let $k$ be a field. Let $R$ be a f.d. semisimple $k$-algebra and $V$ be a simple (left) $R$-module of finite dimension over $k$. Fix integers $n_1 \le n_2 \le n_3$. Consider $V^{\oplus n_3}$ as an $R$-module and consider the sets
   \begin{align*}
     U & = \{ v \in V^{\oplus n_3}\ | \ Rv \simeq  V^{\oplus n_1}\}, \\
     \mathcal{C}_{n_2,n_3} & = \{  C \subseteq V^{\oplus n_3} \ | \ C \text{ is an $R$-submodule, }C \simeq V^{\oplus n_2}\}.
   \end{align*}
Assuming that $U$ is non-empty, for each $u \in U$ there is a  bijection 
\begin{align*}
  \{ C \in \mathcal{C}_{n_2,n_3} \ | \ u \in C\} \leftrightarrow \mathcal{C}_{n_2-n_1,n_3-n_1}
\end{align*}
 \end{lemma}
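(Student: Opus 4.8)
The plan is to realize the claimed bijection as an instance of the correspondence (lattice isomorphism) theorem for $R$-submodules, after the trivial observation that for an $R$-submodule $C\subseteq V^{\oplus n_3}$ one has $u\in C$ if and only if $Ru\subseteq C$. So the first step is to rewrite
$$\{C\in\mathcal{C}_{n_2,n_3}\mid u\in C\}=\{C\ :\ Ru\subseteq C\subseteq V^{\oplus n_3},\ C\simeq V^{\oplus n_2}\},$$
where $\supseteq$ is clear and $\subseteq$ holds because $C$ is stable under the $R$-action.

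Next I would use semisimplicity. Since $V$ is simple, $V^{\oplus n_3}$ is a semisimple $R$-module, so every submodule is a direct summand and stays $V$-isotypic; in particular $Ru$, which is isomorphic to $V^{\oplus n_1}$ and hence of length $n_1$, is a summand, and comparing composition lengths gives $V^{\oplus n_3}/Ru\simeq V^{\oplus(n_3-n_1)}$. I would fix one such isomorphism $\psi\colon V^{\oplus n_3}/Ru\liso V^{\oplus(n_3-n_1)}$; any two such differ by an automorphism of $V^{\oplus(n_3-n_1)}$, so the bijection produced below does not depend on this choice. By the correspondence theorem, $C\mapsto\psi(C/Ru)$ is then a bijection from the set of $R$-submodules of $V^{\oplus n_3}$ containing $Ru$ onto the set of all $R$-submodules of $V^{\oplus(n_3-n_1)}$.

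The remaining step is to check that this bijection respects the isomorphism types. For any $C$ with $Ru\subseteq C\subseteq V^{\oplus n_3}$, the module $C$ is semisimple and $V$-isotypic and $\operatorname{length}(C)=n_1+\operatorname{length}(C/Ru)$; since $C/Ru$ is again $V$-isotypic (a quotient, hence a summand, of $C$), this gives $C\simeq V^{\oplus n_2}$ if and only if $C/Ru\simeq V^{\oplus(n_2-n_1)}$. Restricting the bijection of the previous paragraph to submodules of this shape yields exactly the asserted bijection $\{C\in\mathcal{C}_{n_2,n_3}\mid u\in C\}\leftrightarrow\mathcal{C}_{n_2-n_1,n_3-n_1}$. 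Here the hypotheses $n_1\le n_2\le n_3$ ensure that $n_2-n_1$ and $n_3-n_1$ are nonnegative with $n_2-n_1\le n_3-n_1$, so $\mathcal{C}_{n_2-n_1,n_3-n_1}$ is well defined, while the nonemptiness of $U$ is invoked only so that the statement is not vacuous.

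I do not expect a genuine obstacle: the only substantive inputs are that submodules of the $V$-isotypic module $V^{\oplus n_3}$ are $V$-isotypic direct summands, and that a finite-length $V$-isotypic module is determined up to isomorphism by its length. The mild care needed is in the length bookkeeping and in noting that the correspondence theorem is applied to the pair $Ru\subseteq V^{\oplus n_3}$ rather than to $V^{\oplus n_3}$ itself; once this is set up the rest is formal.
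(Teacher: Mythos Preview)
Your proposal is correct and follows essentially the same approach as the paper: both reduce to the observation $u\in C\Leftrightarrow Ru\subseteq C$ and then invoke the correspondence $C\mapsto C/Ru$ after identifying $V^{\oplus n_3}/Ru\simeq V^{\oplus(n_3-n_1)}$. The paper's proof is two lines and leaves the length bookkeeping and the preservation of isomorphism type implicit, whereas you spell these out carefully via semisimplicity and $V$-isotypy; this is just added detail, not a different argument.
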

 \begin{proof}
   Observe that for any $u \in U$
   \begin{align*}
     u \in C \Leftrightarrow &\  R u \subseteq C  \subseteq V^{\oplus n_3}\\
     \Leftrightarrow & \ \frac{C}{ Ru}\subseteq \frac{ V^{\oplus n_3} }{ Ru } \simeq V^{\oplus (n_3 - n_1)}.
   \end{align*}
   Hence, if we identify $V^{\oplus n_3}/Ru$ with $V^{\oplus (n_3 - n_1)}$, the proposed bijection is simply $C \mapsto C/Ru$.
 \end{proof}
 \begin{corollary}\label{cor:balancedness}
   In the previous lemma, if $k$ is a finite field, then the number $\#\{ C \in \mathcal{C}_{n_2,n_3} \ | \ u \in C\}$ is independent of $u$.
 \end{corollary}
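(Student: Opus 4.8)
The final statement is Corollary \ref{cor:balancedness}, which says that in the setting of the preceding lemma, when $k$ is a finite field, the count $\#\{ C \in \mathcal{C}_{n_2,n_3} \mid u \in C\}$ does not depend on $u \in U$. The plan is to simply exploit the bijection established in the lemma immediately above.

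First I would observe that the lemma provides, for each $u \in U$, an explicit bijection
\begin{align*}
  \{ C \in \mathcal{C}_{n_2,n_3} \mid u \in C\} \;\longleftrightarrow\; \mathcal{C}_{n_2-n_1,\,n_3-n_1}.
\end{align*}
The right-hand side $\mathcal{C}_{n_2-n_1,n_3-n_1}$ is a set that is manifestly independent of the chosen element $u$: it is the set of $R$-submodules of $V^{\oplus(n_3-n_1)}$ isomorphic to $V^{\oplus(n_2-n_1)}$. Hence the cardinality of the left-hand side equals $\#\mathcal{C}_{n_2-n_1,n_3-n_1}$ for every $u \in U$, and in particular is the same for all such $u$.

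The only remaining point is that this cardinality is finite, which is where the hypothesis that $k$ is a finite field enters: since $V$ is finite-dimensional over the finite field $k$, the module $V^{\oplus(n_3-n_1)}$ is a finite set, so it has only finitely many submodules and $\#\mathcal{C}_{n_2-n_1,n_3-n_1} < \infty$. (Strictly speaking one does not even need finiteness for the statement ``independent of $u$'' to hold as an equality of cardinals, but stating it for finite $k$ makes the count an honest natural number, which is what is used in the $U$-balancedness application.) I do not anticipate any genuine obstacle here; the content is entirely in the lemma, and the corollary is a one-line consequence. If one wanted to be slightly more careful, the single thing worth checking is that the identification $V^{\oplus n_3}/Ru \simeq V^{\oplus(n_3-n_1)}$ used in the lemma is legitimate — this is exactly the hypothesis $Ru \simeq V^{\oplus n_1}$ defining $U$ together with the fact that over a semisimple algebra every submodule is a direct summand, so $V^{\oplus n_3}/Ru$ is a quotient of $V^{\oplus n_3}$ by a submodule isomorphic to $V^{\oplus n_1}$ and is therefore isomorphic to $V^{\oplus(n_3-n_1)}$ by Krull--Schmidt. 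Thus the proof is just: apply the bijection of the lemma, and note the target set does not involve $u$.
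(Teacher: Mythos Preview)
Your proposal is correct and matches the paper's approach exactly: the corollary is stated without proof in the paper, as it is an immediate consequence of the bijection in the preceding lemma together with finiteness of $V^{\oplus(n_3-n_1)}$ over a finite field. Your additional remarks on semisimplicity and Krull--Schmidt merely spell out what the lemma already established.
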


 Setting $R= M_n(\mathbb{F}_q)$, $V=\mathbb{F}_q^{n}$, $n_3 = nt$, $n_2 = k$ and $n_1 = n$, we deduce our $U$-balancedness condition. We summarize our results in this case as a consequence of Theorem \ref{thm:mainaverage} in easily accessible form: 
 
 \begin{theorem} \label{thm:specificaverage}
 Let $A$ be a $\IQ$-division algebra whose center is a number field $K$. Let $n=\sqrt{[A:K]}$ and $m=[K:\IQ]$. Let $\calO$ be an order in $A$ and for an integer $t\geq 2$ we consider an infinite family of surjective reduction maps 
 $$\phi_p:\calO^t\to M_n(\IF_q)^t$$
 as given in each coordinate by Lemma \ref{lemma:primesexistence}. Let $i:A^t\to (A\otimes_\IQ\IR)^t$ denote the coordinate-wise embedding and let $f:\IR^{n^2mt}\to \IR$ be a semi-admissible function. For a fixed $n\leq k < nt$, set 
  \begin{align*}
    \mathcal{C}_{k,p} & = \{ C \subseteq M_n(\mathbb{F}_q)^{\oplus t } \ | \ C \text{ is a $M_n(\mathbb{F}_q)$-submodule isomorphic to } (\mathbb{F}_q^{n})^{\oplus k}\}\\
    \mathbb{L}_{k,p} & = \{ \beta_{p} \phi_p^{-1}(C) \ | \ C \in \mathcal{C}_{k,p}\},
  \end{align*}
 where the constant $\beta_p$ normalizing the covolume of lattices in $\mathbb{L}_{k,p}$ to $V:=\Vol(\calO^t)$ is given by $\beta_p=q^{\frac{nk-n^2t}{n^{2}mt}}$. Then if $(n-1)t<k<nt$, we have that
 $$\lim_{p\to\infty}\mathbb{E}_{\mathbb{L}_{k,p}}\left(\sum_{x\in (\beta_p\phi_p^{-1}(C))'}f(i(x))\right)\leq (\zeta(n^2mt)\cdot V) ^{-1}\cdot \int_{\IR^{n^2mt}}f(x)dx,$$
 where the limit is taken over primes in the family and $(\beta_p\phi_p^{-1}(C))'$ denotes the primitive vectors in $\beta_p\phi_p^{-1}(C)$.
 \end{theorem}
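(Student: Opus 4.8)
The plan is to \emph{deduce this from the abstract averaging result Theorem \ref{thm:mainaverage}}, by instantiating the general data $(\calR_p, d_\calR, U_p, \calC_k, d(k), s)$ in the present situation and checking that the $U$-balancedness and non-degeneracy hypotheses hold. First I would fix the family: let $\ip$ range over primes of $\calO_K$ (over rational primes $p$) that split $A$, that do not divide the index of $\calO$ in a maximal order, and — if one wants $d_\calR$ to be literally constant — that have residue degree one, so $q=p$; this family is infinite by \v Cebotarev. Then $\phi_p$ from Lemma \ref{lemma:primesexistence} has target $\calR_p := M_n(\IF_q)$, which as a module over itself is $V^{\oplus n}$ with $V=\IF_q^n$ the simple module, so $\calR_p^{\oplus t}\cong V^{\oplus nt}$; one takes $d_\calR=n^2$ (over $\IF_q$; if $f=[\IF_q:\IF_p]>1$ replace by $n^2f$ and track $\IF_p$-dimensions, with no change to the argument). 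Set $U_p := \calR_p^t\setminus (M_n(\IF_q)\setminus \GL_n(\IF_q))^t$, i.e.\ the tuples with at least one invertible coordinate, and let $\calC_{k,p}$ be the submodules $\cong V^{\oplus k}$, which have $\IF_q$-dimension $d(k)=nk$. One checks immediately that the normalizing constant matches: $\bigl(q^{nk}/q^{n^2t}\bigr)^{1/n^2mt}=q^{(nk-n^2t)/n^2mt}=\beta_p$, consistent with $\beta=(p^{d(k)}/|\calR_p|^t)^{1/n^2mt}$.

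Next I would verify $U$-balancedness. For $x=(x_1,\dots,x_t)\in\calR_p^t$ the cyclic module $\calR_p x\cong \calR_p/\Ann(x)$ is isomorphic to $V^{\oplus r}$, where $r$ is the rank of the combined column span of the $x_i$, and $r=n$ precisely when some $x_i\in\GL_n(\IF_q)$, i.e.\ precisely when $x\in U_p$. Hence $U_p$ is exactly the set $U$ of the lemma preceding Corollary \ref{cor:balancedness} with $R=M_n(\IF_q)$, $V=\IF_q^n$, $(n_1,n_2,n_3)=(n,k,nt)$ (which needs $n\le k\le nt$ and is non-empty, containing $(I,0,\dots,0)$). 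Corollary \ref{cor:balancedness} then gives that $\#\{C\in\calC_{k,p}: u\in C\}$ is independent of $u\in U_p$, which is the required $U$-balancedness, with $L_{U_p}=\#\calC_{k-n,(t-1)n}$.

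Then I would establish non-degeneracy. If $x\in\calO^t$ is nonzero with $\phi_p(x)\in\calR_p^t\setminus U_p$, every $\phi_p(x_i)$ is non-invertible and some coordinate $x_j\neq 0$; for primes avoiding the index the localization at $\ip$ agrees with that of a maximal order, so Lemmas \ref{lemma:nrdanddet} and \ref{lemma:lowerboundbadpoints} apply to $x_j$ and give $\|i(x_j)\|\ge c\,q^{1/(nm)}$ with $c=\sqrt{[A:\IQ]}\,\N(a)^{1/(2[A:\IQ])}$; since $\|i(x)\|^2=\sum_i\|i(x_i)\|^2\ge\|i(x_j)\|^2$ and $q\ge p$, this yields $\|i(x)\|\ge c\,p^{1/(nm)}$, so one may take $s=1/(nm)$. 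The mild condition $|U_p|/|\calR_p|^t\to 1$ follows from $|\calR_p|^t-|U_p|=(q^{n^2}-|\GL_n(\IF_q)|)^t$ together with $|\GL_n(\IF_q)|/q^{n^2}=\prod_{i=0}^{n-1}(1-q^{i-n})\to 1$. Finally one matches the numerical hypotheses of Theorem \ref{thm:mainaverage}: over the admissible $k$ one has $\max_k d(k)=n(nt-1)$, so $(d_\calR t-\max_k d(k))/(mn^2t)=1/(mnt)<1/(nm)=s$ since $t\ge 2$; and the two-sided condition $d_\calR t>d(k)>t(d_\calR-s n^2 m)$ becomes $n^2t>nk>t(n^2-n)$, i.e.\ exactly $(n-1)t<k<nt$. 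With all hypotheses verified, Theorem \ref{thm:mainaverage} yields precisely the stated inequality.

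I do not expect a genuine obstacle here, since the argument is a verification that the present setup is an instance of Theorem \ref{thm:mainaverage}. The points requiring care are: (i) the bookkeeping between $\IF_q$- and $\IF_p$-dimensions, which is cleanest if one restricts to residue-degree-one primes but otherwise costs only a uniform factor $f$; (ii) checking that the elementary description of $U_p$ (some coordinate invertible) coincides with the representation-theoretic set $U$ of Corollary \ref{cor:balancedness}; and (iii) upgrading Lemma \ref{lemma:lowerboundbadpoints} from maximal orders to an arbitrary order $\calO$ by discarding the finitely many primes dividing $[\calO_{\max}:\calO]$, using that the reduced norm of an element of $\calO$ still lies in $\calO_K$ and that the local order is unchanged at such primes.
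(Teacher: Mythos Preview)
Your approach is the same as the paper's: instantiate Theorem \ref{thm:mainaverage} with $\calR_p=M_n(\IF_q)$, codes $\cong V^{\oplus k}$, use Corollary \ref{cor:balancedness} for $U$-balancedness and Lemma \ref{lemma:lowerboundbadpoints} for non-degeneracy, and translate the range on $d(k)$ into $(n-1)t<k<nt$. The numerical checks you give are correct.

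There is one small slip. Your claim ``$r=n$ precisely when some $x_i\in\GL_n(\IF_q)$'' is false: only the implication $\Leftarrow$ holds. For example with $n=t=2$, $x_1=E_{11}$, $x_2=E_{22}$ have combined column span $\IF_q^2$ (so $r=2$) while neither is invertible. Thus your $U_p$ (some coordinate invertible) is a \emph{proper} subset of the representation-theoretic set $U=\{v:\calR_p v\cong V^{\oplus n}\}$ to which Corollary \ref{cor:balancedness} applies. This does not damage the argument: since your $U_p\subset U$ and the count is constant on $U$, it is a fortiori constant on your $U_p$; and your non-degeneracy check, your $|U_p|/|\calR_p|^t\to 1$ computation, and the range on $k$ are all valid for your smaller set. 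The paper simply takes $U_p=U$ itself and observes that if $\phi_p(a)\notin U$ then every $\phi_p(a_i)$ is non-invertible (since the combined column span is proper), reaching the same non-degeneracy bound. Either choice works; you should just correct the ``precisely when'' to an inclusion, which is also what your own caveat (ii) was flagging.
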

  \begin{proof}
  This readily follows from our discussions above as a special case of Theorem \ref{thm:mainaverage}: indeed we set
  $$ U_{p} = \{ v \in M_n(\mathbb{F}_q)^{\oplus t} \ | \ \dim_{\mathbb{F}_q}\left( M_n(\mathbb{F}_q) v  \right) = n^{2} \}.$$
  Moreover, we take $R_p=M_n(\IF_q)$ and the U-balancedness condition is satisfied by Corollary \ref{cor:balancedness}. Finally, the non-degeneracy condition is again satisfied as in Example \ref{example:firstresult}, since if $a \in \mathcal{O}^{\oplus t} \setminus \{ 0\}$ is such that $\phi_p(a) \notin U_{p}$, then $a$ has to have one coordinate which is non-trivial and is a non-invertible matrix modulo $p$. Thus we obtain the bound in equation \eqref{eq:lowerboundnorm} and obtain non-degeneracy as before. Finally, the condition on $d(k)$ in Theorem \ref{thm:mainaverage} then just becomes $tn-t<k<tn$. 
  \end{proof}
  In particular, we obtain in this way a valid result as soon as $t\geq 2$.
\section{Improved bounds}\label{sec:four}
Keeping the notation from the previous section, we now show how to leverage the extra symmetries under finite groups $G_0\subset \calO^\times$ of the lattices obtained in $\mathbb{L}_p$ in order to obtain sphere packings of density exceeding the Minkowski--Hlawka bound. We first present a result based on the approach in \cite[Corollary 1]{CampelloRandom} which in turn is inspired by Vance \cite{VanceImprovedBounds} and Rogers' \cite{RogersExistence(Annals)} work. \par

For a central $K$-division algebra $A$, we recall that $A_{\IR}$ denotes the real vector space $A\otimes_{\IQ}\IR$ of dimension $n^2m$. We also recall that the space $A_{\mathbb{R}}^{t}$ is endowed with a norm coming from the quadratic form as defined in Equation \ref{eq:eva_norm} for some positive definite and symmetric $a \in A_{\mathbb{R}}$. Such a norm will be chosen and fixed permanently in Lemma \ref{lemma:unitaction}.

Given a lattice $\Lambda\subset A_{\IR}^t$ which is an $\calO$-module and such a choice of norm we however first define the \emph{k-th $A$-minimum} $\min_k(\Lambda)$ to be the smallest $r$ such that the closed ball $\mathbb{B}_{A_{\IR}}(r)$ of radius $r$ contains $k$ $A_{\IR}$-linearly independent lattice vectors (under the left $A_{\mathbb{R}}$-action on $A_{\mathbb{R}}^{t}$). \par

In particular, $\min_1(\Lambda)$ is the shortest vector length $\lambda_1(\Lambda)$ in $\Lambda$. We begin by remarking that a lemma of Minkowski \cite{minkowski1910geometrie} which was extended by Vance \cite[Theorem 2.2]{VanceImprovedBounds} holds even more generally: 

\begin{prop}\label{prop:productminima}
Let $t\geq 2$ and $\Lambda$ denote an $\calO$-lattice in $A_{\IR}^t$. Then $\Lambda$ contains a left $A_{\IR}$-module basis $\{v_1,\ldots,v_t\}$ such that $\|v_i\|=\min_i(\Lambda)$. Moreover, if $\Vol(\Lambda)=1$, there exists an $\calO$-lattice ${\Lambda'}$ of covolume one in $A_{\IR}^t$ such that 
$$\lambda_1({\Lambda'})=\left(\prod_{i=1}^t\Min_i(\Lambda)\right)^{1/t}.$$
\end{prop}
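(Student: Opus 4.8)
The plan is to adapt the classical argument of Minkowski--Vance on successive minima to the division‐algebra setting. The statement has two parts: first, that one can realize the $A$-minima $\min_i(\Lambda)$ by an actual left $A_\IR$-module basis $\{v_1,\dots,v_t\}$ of $\Lambda$; second, a rescaling construction producing a covolume-one $\calO$-lattice $\Lambda'$ whose shortest vector equals the geometric mean of the $A$-minima.

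\medskip

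\noindent\textbf{Realizing the $A$-minima by a module basis.} First I would argue that $\Lambda$, being an $\calO$-lattice in $A_\IR^t\cong M_r(\IR)^{?}\times\cdots$ (a product of matrix algebras), spans $A_\IR^t$ as a left $A_\IR$-module, so $\min_t(\Lambda)<\infty$ and the $A$-minima $\min_1(\Lambda)\le\cdots\le\min_t(\Lambda)$ are well-defined. I choose inductively lattice vectors $v_1,\dots,v_t$ with $\|v_i\|=\min_i(\Lambda)$ such that $v_1,\dots,v_i$ are left $A_\IR$-linearly independent; such $v_i$ exists by definition of $\min_i$. The key claim is that $\{v_1,\dots,v_t\}$ is in fact an $A_\IR$-\emph{basis} of the $\calO$-lattice $\Lambda$, i.e. $\Lambda = \calO v_1 + \cdots + \calO v_t$ — or at least that after replacing $\Lambda$ by a suitable sublattice this holds; actually for the statement as phrased it suffices that $\{v_i\}$ is a left $A_\IR$-basis of $A_\IR^t$ lying in $\Lambda$, which is immediate from $A_\IR$-independence and a dimension count. (Re-reading the statement, ``a left $A_\IR$-module basis'' should be read in this weaker sense, namely a basis of the ambient space contained in $\Lambda$; the minimality of each $\|v_i\|$ is what carries content.) So this first part is essentially a greedy construction together with the finiteness of $\min_t(\Lambda)$.

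\medskip

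\noindent\textbf{The rescaling construction.} For the second part I would mimic Vance's proof of \cite[Theorem 2.2]{VanceImprovedBounds}. Write each $v_i = g_i \cdot e_i$ where $e_i$ is the $i$-th standard basis vector of $A_\IR^t$ (using that $\{v_i\}$ is an $A_\IR$-basis, there is a block-diagonal-free change of coordinates, but more cleanly: let $T\in\GL_t(A_\IR)$ be the $A_\IR$-linear map sending $e_i\mapsto v_i$). Then $\Lambda = T(\Lambda_0)$ for $\Lambda_0 = T^{-1}\Lambda$, an $\calO$-lattice containing $e_1,\dots,e_t$. Now define a diagonal automorphism $D = \mathrm{diag}(s_1,\dots,s_t)$ of $A_\IR^t$ with each $s_j\in A_\IR$ a symmetric positive definite scalar (i.e. a positive real scalar times the identity, which lies in the center so interacts well with the $\calO$-action), chosen so that $\|D v_i\|$ is the same for all $i$ — concretely $s_i$ scales the $i$-th ``direction'' so that $s_i\cdot\min_i(\Lambda)$ is constant $=\left(\prod_j\min_j(\Lambda)\right)^{1/t}$ — and so that $\det D = 1$, which holds since the product of the scaling factors equals $1$. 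The point, exactly as in Minkowski--Vance, is that in the quadratic form \eqref{eq:eva_norm} the vector $v_i$ and all its ``lower'' companions get shortened by at most the relevant factor, and one shows $\lambda_1(D\Lambda) = \left(\prod_i\min_i(\Lambda)\right)^{1/t}$ using that any nonzero $w\in\Lambda$ lies in the span of $v_1,\dots,v_k$ for some minimal $k$, hence $\|w\|\ge\min_k(\Lambda)$, and the diagonal scaling multiplies the relevant coordinate block by $s_k\le s_1$... — here one has to be slightly careful about the ordering of the $s_j$ and the fact that $w$ has a nonzero component precisely in the $v_k$-direction. Set $\Lambda' = D\Lambda = DT(\Lambda_0)$; since $D$ and $T$ are $\calO$-linear (the $s_j$ being central scalars) and $\det(DT)=\det T = \Vol(\Lambda) = 1$, $\Lambda'$ is an $\calO$-lattice of covolume one.

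\medskip

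\noindent\textbf{Main obstacle.} The delicate point is the inequality $\lambda_1(\Lambda')\ge\left(\prod_i\min_i(\Lambda)\right)^{1/t}$: one must verify that for an arbitrary nonzero $w\in\Lambda$, writing $k = k(w)$ for the largest index such that $w$ has nonzero component along $v_k$ in the basis $\{v_i\}$, the scaled norm satisfies $\|D w\|^2 \ge s_k^2\,\|w\|^2 \ge s_k^2\,\min_k(\Lambda)^2 = \left(\prod_j\min_j(\Lambda)\right)^{2/t}$, which requires that the diagonal scaling factors be arranged in the correct (non-increasing along $v_k$) order relative to the successive minima and that the quadratic form \eqref{eq:eva_norm} genuinely decouples across the chosen basis — i.e. one should first apply a Gram--Schmidt-type orthogonalization of $\{v_i\}$ with respect to the form $q_a$, replacing $v_i$ by its component orthogonal to $\mathrm{span}(v_1,\dots,v_{i-1})$, which does not increase $\|v_i\|$ and makes the scaling argument clean. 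Handling this orthogonalization compatibly with the left $\calO$-action (so that $\Lambda'$ remains an $\calO$-module) is the one spot that genuinely uses that $A$ is a division algebra and that $a$ is central-ish, and is where I expect the proof to require the most care; everything else is a routine transcription of the real/Hurwitz-integer arguments of Minkowski and Vance.
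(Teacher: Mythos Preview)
Your approach is essentially the paper's: greedy selection of $v_i$, a Gram--Schmidt orthonormalization of the $v_i$ to a left $A_\IR$-basis $x_1,\dots,x_t$, then the diagonal rescaling $y_i x_i\mapsto (\lambda_1\cdots\lambda_t)^{1/t}\,(y_i/\lambda_i)\,x_i$ (with $\lambda_i=\min_i(\Lambda)$), and the ``last nonzero coordinate $i_0$'' inequality $\|y'\|^2\ge(\lambda_1\cdots\lambda_t)^{2/t}\lambda_{i_0}^{-2}\|y\|^2\ge(\lambda_1\cdots\lambda_t)^{2/t}$.

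Two clarifications where you over-worry. First, the Gram--Schmidt step is used only to produce an orthonormal $A_\IR$-basis $\{x_i\}$ with the same flag of spans as $\{v_i\}$, so that the norm decouples as $\|y\|^2=\sum_i\T(y_i^*ay_i)$; you do \emph{not} replace the $v_i$ by their orthogonal components (those are not in $\Lambda$, and the remark ``does not increase $\|v_i\|$'' is irrelevant to the argument). The paper treats this Gram--Schmidt for semisimple real algebras with a positive involution in a dedicated appendix; it needs the involution and the invertibility of $\langle x,x\rangle_A$ for $x\neq0$, but not that $A$ is a division algebra. Second, the $\calO$-module property of $\Lambda'$ is automatic and costs nothing: the rescaling map sends $\sum y_ix_i$ to $\sum (y_i/\lambda_i)x_i$ with $\lambda_i\in\IR^\times$ central, so it is left $A_\IR$-linear (hence left $\calO$-linear) regardless of $a$ or of any ``central-ish'' hypothesis. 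So the spot you flag as the main obstacle is in fact routine once the orthonormal basis is in hand.
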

\begin{proof}
Essentially the same proof goes through as in \cite[Theorem 2.2]{VanceImprovedBounds} (replacing $4$ by the appropriate dimension $mn^2$). 

To proceed, we select 
  \begin{align*}
    v_1 & =    \argmin_{v \in \Lambda \setminus \{ 0\}}\  \|i(v)\| \\
    v_2 &= \argmin_{v \in \Lambda \setminus A v_1} \|i(v)\|\\
    v_3 &=  \argmin_{v \in \Lambda \setminus (A v_1 + A v_2)} \|i(v)\|\\
    v_4 & =   \argmin_{v \in \Lambda \setminus (A v_1 + A v_2 + A v_3)} \|i(v)\| \\
    & \vdots 
  \end{align*}
  and we can argue inductively that they are linearly independent with respect to $A_{\mathbb{R}}$-action and satisfy $\|i(v_i)\| = \min_{i}(\Lambda)$.

  We now generate vectors $x_1,x_2,\cdots,x_k$ using a Gram-Schmidt process (see Appendix \ref{se:gs_process}) on $v_1,v_2,\cdots,v_k$. 
  Since $\{ v_i\}_{i=1}^{k}$ is free with respect to left-$A_{\mathbb{R}}$ action, we get that $x_1,x_2,\cdots,x_k$ freely generate $A_{\mathbb{R}}^{k}$ as a left-$A_{\mathbb{R}}$ module.
  
  Now consider the $\mathbb{R}$-linear map given by 
  \begin{align*}
  T: y \mapsto \frac{y_i}{\lambda_i} x_1 + \frac{y_2}{ \lambda_2} x_2 + \cdots \frac{y_k}{ \lambda_k} x_k , \\ \text{ for }y = y_1x_1+y_2x_2 + \cdots +y_kx_k \in A_{\mathbb{R}}^{k}.
  \end{align*}
  
  Define a lattice $\Lambda'$ by:
\begin{align*}
  \Lambda' =  (\lambda_1 \lambda_2 \ldots \lambda_k)^{ {1}/{k}}T(\Lambda).
\end{align*}
  We observe that $\det(\Lambda') = \det(\Lambda)$. For any $y' \in \Lambda' \setminus \{ 0\}$, we can now find $y = y_1 x_1 + \cdots + y_k x_k \in \Lambda$ such that $y' = (\lambda_1\cdots \lambda_k)^{1/k} T ( y)$. Furthermore, there must be a smallest $i_0 \ge 1$ such that $y_{i_0} \neq 0$ and $y_{i_0+1} = y_{i_0+2} = \cdots = y_{k} = 0$. Then $y \in \Lambda \cap ( A_{\mathbb{R}} v_1 + A_{\mathbb{R}} v_2 + \cdots + A_{\mathbb{R}} v_{i_0})$ and  $y\not\in\Lambda \cap ( A_{\mathbb{R}} v_1 + \cdots + A_{\mathbb{R}} v_{i_0 - 1} )$. Therefore $\|i(y)\| \ge \lambda_{i_0}$. This implies that 
\begin{align*}
  \|i(y')\|^{2} = \left( \lambda_1 \lambda_2 \ldots \lambda_k \right)^{2/k} \sum_{i=1}^{i_0} \left| \frac{i(y_i)}{\lambda_i}\right|^{2}   \ge \left( \lambda_1 \lambda_2 \ldots \lambda_k \right)^{2/k} \frac{1}{\lambda_{i_0}^{2}} \sum_{i=1}^{k}\left| {i(y_1)}\right|^{2} \ge (\lambda_1 \ldots \lambda_k)^{2/k},
\end{align*}
using orthogonality as in Appendix \ref{se:gs_process}. This lower bound is tight, since we can set $y_1=1_{A_{\mathbb{R}}}$ and $y_i=0$ for $i \ge 2$.
\end{proof}
\begin{remark}\label{rem:effectiveMinkowski}
  This version of Minkowski's lemma given above is effective in the sense that for an explicit lattice $\Lambda$,  the lattice $\Lambda'$ can also be computed algorithmically. The only important step here is to find the successive minima vectors $v_i$, and then $\Lambda'$ is easily seen to be computable.
  
  When $A=\mathbb{Q}$ (so $A_\IR = \IR$), finding the $v_i$ can be achieved by the so-called {\bf SMP} algorithm, which will have an exponential running time of $O(2^{2t})$. Details can be found in \cite{micciancio2012complexity}. The division algebra case requires only slight modifications of the algorithm and should have a running time of $O(2^{2mn^2t})$.
\end{remark}

We also record the lemma: 
\begin{lemma}\label{lemma:unitaction}
  Let $\calO$ denote an order in a $K$-division algebra $A$ and let $G_0\subset \calO^*$ denote a finite group. Then $G_0$ acts on vectors in any $\calO$-lattice $\Lambda\in \mathbb{L}_p$ obtained as in the construction of Theorem \ref{thm:mainaverage}. Furthermore, we may choose a symmetric positive definite element $a \in A_\IR$ such that for all such $\Lambda$ the induced norm satisfies
  $$\|i(x)\|^2=\sum_{i=1}^{t} \T(x_{i}^{*}ax_i) = \|i(g\cdot x)\|^2 \text{ }\forall g\in G_0, x\in \Lambda.$$
\end{lemma}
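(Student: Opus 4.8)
The plan is to produce a $G_0$-invariant quadratic form on $A_\IR^t$ by an averaging (``Weyl unitary trick'') argument, and then verify that the averaged form is still of the shape $x\mapsto \sum_i \T(x_i^* a x_i)$ for a single symmetric positive definite $a\in A_\IR$. First I would start from the form $q_0(x)=\sum_{i=1}^t\T(x_i^* x_i)$ associated to the choice $a=1$, which by Lemma \ref{le:involution} is positive definite on $A_\IR^t$. The group $G_0\subset\calO^\times$ acts on $A_\IR^t$ diagonally by left multiplication, and since $G_0$ preserves $\calO^t$ it preserves each lattice $\Lambda\in\mathbb{L}_p$ (these are pre-images under the $\calO$-linear map $\phi_p$ of $\calO$-submodules, hence $\calO$-modules, hence $G_0$-stable); this establishes the first assertion. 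Then I would set
\begin{equation*}
  \widetilde q(x)=\frac{1}{|G_0|}\sum_{g\in G_0} q_0(g\cdot x),
\end{equation*}
which is manifestly $G_0$-invariant and, being an average of positive definite forms, is itself positive definite on $A_\IR^t$.

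The key point is that $\widetilde q$ again has the required coordinate-wise form. Since the $G_0$-action is diagonal, $q_0(g\cdot x)=\sum_{i=1}^t\T((gx_i)^*(gx_i))=\sum_{i=1}^t\T(x_i^*(g^*g)x_i)$, so
\begin{equation*}
  \widetilde q(x)=\sum_{i=1}^t\T\!\Big(x_i^*\Big(\tfrac{1}{|G_0|}\sum_{g\in G_0}g^*g\Big)x_i\Big)=\sum_{i=1}^t\T(x_i^* a x_i),\qquad a:=\frac{1}{|G_0|}\sum_{g\in G_0}g^*g.
\end{equation*}
It remains to check that this $a\in A_\IR$ is symmetric and positive definite in the sense of the paper. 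Symmetry is immediate: $(g^*g)^*=g^*g$, so $a^*=a$. For positive definiteness of the form $x\mapsto\T(x^*ax)$ on $A_\IR$, note that $\T(x^*ax)=\frac{1}{|G_0|}\sum_g\T((gx)^*(gx))\ge 0$, with equality forcing $gx=0$ for some (hence, taking $g=1\in G_0$, for the identity) $g$, i.e. $x=0$; here I use that $1\in G_0$ and that $\T(y^*y)$ is positive definite by Lemma \ref{le:involution}. (Alternatively, each $g^*g$ is symmetric positive definite as remarked after Lemma \ref{le:involution}, and a convex combination of such elements is again symmetric positive definite.) Thus $a$ is a legitimate choice, and with the norm $\|i(x)\|^2:=\widetilde q(x)=\sum_i\T(x_i^*ax_i)$ we have $\|i(g\cdot x)\|=\|i(x)\|$ for all $g\in G_0$ and all $x\in A_\IR^t$, in particular for all $x$ in any $\Lambda\in\mathbb{L}_p$.

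I do not expect a serious obstacle here: the only thing to be slightly careful about is that ``symmetric positive definite element $a$'' in this paper means $a^*=a$ together with positive definiteness of $x\mapsto\T(x^*ax)$ on $A_\IR$ (not on $A_\IR^t$), so one must phrase the verification at the level of a single copy of $A_\IR$ as above rather than invoking positive definiteness of $\widetilde q$ on the big space directly. One should also record that this choice of $a$ (and hence of the norm on $A_\IR^t$) is made once and for all and depends only on $G_0$, not on $p$ or on the particular lattice $\Lambda\in\mathbb{L}_p$, which is what the statement asserts and what later sections require.
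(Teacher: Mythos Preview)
Your argument is correct and is essentially the paper's proof: the paper also sets $a=\sum_{g\in G_0}g^*g$ (without your harmless normalization factor $1/|G_0|$) and leaves the easy verifications you spell out to the reader. One tiny wording slip: equality in $\sum_g\T((gx)^*(gx))=0$ forces $gx=0$ for \emph{all} $g$, not just some $g$; your conclusion via $g=1$ is of course unaffected.
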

\begin{proof}
The lattices obtained in $\mathbb{L}_p$ via our construction are easily seen to be preserved under the $\calO$-action when the morphisms $\phi_p$ preserve the multiplicative structure and the codes in $\calC$ we are pulling back are $\phi_p(\calO)$-modules. Therefore the units act as well. \par
For the second part, we may set $$a=\sum_{ g \in G_0} g^* g.$$ One can easily check that the induced quadratic form then has the required $G_0$-invariance.

\end{proof}
From now on, we may and will assume a norm as in Lemma \ref{lemma:unitaction} has been chosen on $A_\IR$.
Using the methods from \cite{RogersExistence(Annals),VanceImprovedBounds,CampelloRandom} we apply Theorem \ref{thm:mainaverage} to a specific function $f:\IR^{mn^2t}\to \IR$ in order to obtain improved bounds. 
\begin{theorem}\label{thm:improvedbounds}

Let $A$ be a central simple division algebra over a number field $K$. Let $\calO$ be an $\calO_K$-order in $A$. Let $n^2=[A:K]$, $m=[K:\IQ]$ and let $t\geq 2$ be a positive integer. Let $G_0$ be a fixed finite subgroup of $\calO^\times$. Then there exists a lattice $\Lambda$ in dimension $n^2mt$ achieving 
$$\Delta(\Lambda)\geq\frac{\vert G_0\vert\zeta(mn^2t)\cdot t}{2^{mn^2t}\cdot e(1-e^{-t})}.$$
Moreover, there exists for any $\varepsilon>0$ an $\calO$-lattice $\Lambda_\varepsilon$ in dimension $n^2mt$ achieving $$\Delta(\Tilde{\Lambda}_\varepsilon)\geq(1-\varepsilon)\cdot \frac{\vert G_0\vert\zeta(mn^2t)\cdot t}{2^{mn^2t}\cdot e(1-e^{-t})}$$
which can be constructed. Indeed, $\Tilde{\Lambda}_\varepsilon$ is obtained by applying Proposition \ref{prop:productminima} to a suitable sublattice of $\calO^t$ obtained as a pre-image via reduction modulo primes $\ip$ of $\calO_K$ of large enough norm of a code isomorphic to $k$ copies of simple left $\calO/\ip \calO$-modules for $nt-t<k<nt$ as in Theorem \ref{thm:specificaverage}.

\end{theorem}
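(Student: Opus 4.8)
The plan is to reduce the density bound to a lower bound on the product of the left $A_\IR$‑minima $\min_1(\Lambda),\dots,\min_t(\Lambda)$ of a suitable member $\Lambda$ of the ensemble $\mathbb{L}_{k,p}$ of Theorem \ref{thm:specificaverage}, to obtain such a bound from Theorem \ref{thm:specificaverage} applied to a Gaussian‑type test function while exploiting the $G_0$‑symmetry, and then to feed it into Proposition \ref{prop:productminima}. Throughout write $N=mn^2t$, $\omega_N=\Vol(\IB_N(1))$, $V=\Vol(\calO^t)$, and fix once and for all the $G_0$‑invariant positive definite form on $A_\IR^t$ from Lemma \ref{lemma:unitaction}; since $t\geq 2$ the interval $((n-1)t,nt)$ contains an integer $k$, so Theorem \ref{thm:specificaverage} is available for $\mathbb{L}_{k,p}$.

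The first ingredient is a symmetry--counting inequality: for every $\calO$‑lattice $\Lambda\in\mathbb{L}_{k,p}$ and every nonincreasing $w\colon\IR_{\geq 0}\to\IR_{\geq 0}$ with $w(\rho)\to 0$,
\[
\sum_{x\in\Lambda'}w\bigl(\lVert i(x)\rVert\bigr)\ \geq\ \vert G_0\vert\sum_{j=1}^t w\bigl(\min_j(\Lambda)\bigr);
\]
in particular $\#\bigl(\Lambda'\cap\IB_N(\rho)\bigr)\geq\vert G_0\vert\,\#\{j:\min_j(\Lambda)\leq\rho\}$. I would prove this by noting that, $A$ being a division ring, $G_0$ acts freely on $\Lambda\setminus\{0\}\subseteq A^t$, that this action preserves the chosen norm (Lemma \ref{lemma:unitaction}) and primitivity, and that the successive‑minima vectors $v_1,\dots,v_t$ produced by Proposition \ref{prop:productminima} are primitive and left $A_\IR$‑independent; hence the orbits $G_0v_j$ are pairwise disjoint, each of cardinality $\vert G_0\vert$, and $G_0v_j\subseteq\Lambda'\cap\IB_N(\min_j(\Lambda))$, so the layer‑cake formula gives the displayed bound. (If $-1\notin G_0$ one even gets the constant $2\vert G_0\vert$, but $\vert G_0\vert$ suffices.)

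Next I would apply Theorem \ref{thm:specificaverage} to $f(x)=e^{-a\lVert i(x)\rVert^N}$, which is semi‑admissible for every $a>0$ (it is continuous, Riemann‑integrable, and dominated by $b/(1+\lVert x\rVert)^{N+\delta}$); combining with the symmetry--counting inequality and the identity $\int_{\IR^N}e^{-a\lVert x\rVert^N}\,dx=\omega_N/a$ this yields, for each $a>0$, $\lim_{p\to\infty}\mathbb{E}_{\mathbb{L}_{k,p}}\bigl[\sum_{j}e^{-a\min_j(\Lambda)^N}\bigr]\leq\omega_N/\bigl(a\vert G_0\vert\zeta(N)V\bigr)$, and, taking $f=\mathbf{1}_{\IB_N(\rho)}$ instead, the per‑scale estimate $\lim_{p}\mathbb{E}[\,\#\{j:\min_j(\Lambda)\leq\rho\}\,]\leq\omega_N\rho^N/\bigl(\vert G_0\vert\zeta(N)V\bigr)$. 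The heart of the argument, and the step I expect to be the main obstacle, is to upgrade these per‑scale estimates into the existence, for $p$ large, of a \emph{single} lattice $\Lambda\in\mathbb{L}_{k,p}$ with $\min_j(\Lambda)\geq(1-\varepsilon)\bigl(j\vert G_0\vert\zeta(N)V/\omega_N\bigr)^{1/N}$ for \emph{all} $j=1,\dots,t$ at once; a union bound over the $t$ thresholds is too lossy, and instead one argues as in Rogers' existence theorem \cite{RogersExistence(Annals)} (as used by Vance and Campello \cite{VanceImprovedBounds,CampelloRandom}): the non‑degeneracy condition of Section \ref{sec:three}, together with a counting/$U$‑balancedness estimate on short ``good'' preimages of codes, shows that all but an $\varepsilon$‑fraction of $\mathbb{L}_{k,p}$ have $\lambda_1(\Lambda)$ bounded below by a fixed positive constant, and on this sub‑ensemble Rogers' argument --- truncating the weight at that constant and controlling the entire counting function simultaneously rather than threshold by threshold --- produces the desired $\Lambda$, for which $\prod_{j}\min_j(\Lambda)\geq(1-\varepsilon)^t(t!)^{1/N}\bigl(\vert G_0\vert\zeta(N)V/\omega_N\bigr)^{t/N}$.

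Finally, rescaling $\Lambda$ to covolume one and applying Proposition \ref{prop:productminima} gives an $\calO$‑lattice $\widetilde\Lambda_\varepsilon$ of covolume one with $\lambda_1(\widetilde\Lambda_\varepsilon)^N=V^{-1}\bigl(\prod_j\min_j(\Lambda)\bigr)^{N/t}$, whence $\Delta(\widetilde\Lambda_\varepsilon)=\omega_N\lambda_1(\widetilde\Lambda_\varepsilon)^N/2^N\geq(1-\varepsilon)^N(t!)^{1/t}\vert G_0\vert\zeta(N)/2^N$; the effective bound follows from the elementary inequality $(t!)^{1/t}\geq t/\bigl(e(1-e^{-t})\bigr)$ after relabelling $\varepsilon$, and, since that inequality is strict for every $t\geq 1$ (both sides being asymptotic to $t/e$), choosing $\varepsilon$ small enough makes the same $\widetilde\Lambda_\varepsilon$ attain the clean bound of the first assertion. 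Constructibility is immediate: fixing a single prime $\ip$ of large enough norm makes $\mathbb{L}_{k,p}$ a finite explicit set, so the optimal $\Lambda$ is found by exhaustive search and $\widetilde\Lambda_\varepsilon$ is computed from it via the effective form of Proposition \ref{prop:productminima} (Remark \ref{rem:effectiveMinkowski}). The remaining steps --- verifying semi‑admissibility, the $\Gamma$‑integral, the covolume bookkeeping, and the inequality for $(t!)^{1/t}$ --- are routine.
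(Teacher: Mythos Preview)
Your proof has a genuine gap at exactly the point you flag as ``the main obstacle'': you never actually carry out the passage from the per-scale averages
\[
\lim_{p\to\infty}\mathbb{E}\bigl[\#\{j:\min_j(\Lambda)\le\rho\}\bigr]\ \le\ \frac{\omega_N\rho^N}{|G_0|\,\zeta(N)\,V}
\]
to the existence of a single $\Lambda\in\mathbb{L}_{k,p}$ with $\min_j(\Lambda)\ge(1-\varepsilon)\bigl(j|G_0|\zeta(N)V/\omega_N\bigr)^{1/N}$ for all $j$ simultaneously. You correctly observe that a union bound over $t$ thresholds loses a factor of $t$, and then invoke ``Rogers' argument'' as a black box that ``control[s] the entire counting function simultaneously''. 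But Rogers' argument is not a separate ensemble-refinement step layered on top of per-scale ball-counting; it \emph{is} a specific choice of test function that bypasses per-scale control altogether. Your Gaussian $f(x)=e^{-a\|x\|^N}$ and indicator $\mathbf{1}_{\IB_N(\rho)}$ do not encode $\sum_j\log\min_j(\Lambda)$, and no amount of truncation or restriction to a good sub-ensemble recovers the product bound from them without losing constants.

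The paper avoids this entirely by applying Theorem~\ref{thm:specificaverage} once to the single radial function
\[
f_r(y)=\begin{cases}
\tfrac{1}{mn^2}&0\le\|y\|<re^{(1-t)/N},\\
\tfrac{1}{N}-\log(\|y\|/r)&re^{(1-t)/N}\le\|y\|\le re^{1/N},\\
0&\text{else},
\end{cases}
\]
whose integral is $\omega_N r^N\,e(1-e^{-t})/N$. Choosing $r$ so that this equals $(1-\varepsilon)|G_0|V\zeta(N)/(mn^2)$ and combining with your symmetry inequality (which is correct and matches the paper) gives some $\Lambda$ with $\sum_{j=1}^t f_r(\min_j(\Lambda))<1/(mn^2)$. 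The shape of $f_r$ is engineered precisely so that this single scalar inequality forces first $\min_j(\Lambda)\ge re^{(1-t)/N}$ for every $j$ (else one term already contributes $1/(mn^2)$), and then $\sum_j\log(\min_j(\Lambda)/r)>0$, i.e.\ $\bigl(\prod_j\min_j(\Lambda)\bigr)^{1/t}>r$. Proposition~\ref{prop:productminima} then yields $\widetilde\Lambda_\varepsilon$ with $\lambda_1(\widetilde\Lambda_\varepsilon)>r$, and the density bound drops out with the constant $t/\bigl(e(1-e^{-t})\bigr)$ appearing directly from $\int f_r$ --- no detour through $(t!)^{1/t}$ is needed. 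The first (clean) assertion is obtained by letting $\varepsilon\to 0$ and invoking Mahler compactness.
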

\begin{proof}

We define $f$ to be the radial function $f_r$ of bounded support given by
$$f_r(y)=\begin{cases}\frac{1}{mn^2}& \text{ if }0\leq \|y\|<re^{(1-t)/mn^2t} \\
\frac{1}{mn^2t}-\log (\frac{\|y\|}{r})&\text{ if }re^{(1-t)/mn^2t}\leq \|y\|\leq re^{1/mn^2t}\\
0& \text{ else }
\end{cases}$$
This function is indeed semi-admissible and we have that 
$$\int_{\IR^{mn^2t}}f_r(y)dy=V_{mn^2t}\cdot r^{mn^2t}\cdot \frac{e(1-e^{-t})}{mn^2t},$$
where $V_{mn^2t}$ denotes the volume of the unit ball in $mn^2t$-dimensional Euclidean space. 
For a small $0<\varepsilon<1$ we may find $r\geq 0$ so that 
$$V_{mn^2t}\cdot r^{mn^2t}\cdot \frac{e(1-e^{-t})}{mn^2t}=(1-\varepsilon)\cdot \frac{\vert G_0\vert \vert \Vol(\calO^t)\vert \zeta(mn^2t)}{mn^2}.$$
Taking $\mathbb{L}_p$ and $k$ satisfying the assumptions of Theorem \ref{thm:mainaverage}, we may therefore for $p$ large enough find a lattice $\Lambda=\beta\cdot i(\Lambda_0)\in \mathbb{L}_p$ of volume $\Vol(\calO^t)$ such that 
$$\sum_{y\in \Lambda'}f_r(y)\leq (1-\varepsilon)\frac{\vert G_0\vert}{mn^2}<\frac{\vert G_0\vert}{mn^2}.$$
We now use the fact that the units of finite order $G_0<\calO^\times$ act freely on primitive vectors of $\Lambda$ and that $\| i(gv)\|=\|i(v)\|$ for $g\in G_0$ for our choice of norm (see Lemma \ref{lemma:unitaction}). Indeed, letting $\{v_1,\ldots,v_t\}$ be linearly independent vectors achieving the $A$-minima $\|\beta\cdot i(v_j)\|=\min_j(\Lambda)$ as guaranteed by Proposition \ref{prop:productminima}, we then have that 
$$\sum_{y\in \Lambda'}f_r(y)\geq \sum_{j=1}^t\sum_{g\in G_0}f_r(\beta\cdot  i(gv_j))=\vert G_0\vert\sum_{j=1}^t f_r(\beta\cdot i(v_j)). $$
In other words, $\sum_{j=1}^t f_r(\beta\cdot i(v_j))<1/(mn^2)$ so that by definition of $f_r$ we must have 
\begin{equation}\label{eq:minimaboundone}
    \min_j(\Lambda)\geq r e^{(1-t)/(mn^2t)} \text{ for all }j.
\end{equation}
Moreover, it must then be by definition of $f_r$ that 
\begin{equation}\label{eq:minimaboundtwo}
    \sum_{j=1}^t \log \left(\frac{\min_j(\Lambda)}{r}\right)>0
\end{equation}
and hence 
$$\left(\prod_{j=1}^t \min_j(\Lambda)\right)^{1/t}>r.$$
From proposition \ref{prop:productminima} we deduce the (constructive) existence of a lattice $\Tilde{\Lambda}$ with volume equal to $\Vol(\Lambda)$ and shortest vector length $\lambda_1(\Tilde{\Lambda})>r$. We thus obtain for all such $\varepsilon$ the existence of a lattice $\Tilde{\Lambda}_\varepsilon$ of volume $\Vol(\calO^t)$ and packing density 
$$\Delta(\Tilde{\Lambda}_\varepsilon)\geq(1-\varepsilon)\cdot \frac{\vert G_0\vert\zeta(mn^2t)\cdot t}{2^{mn^2t}\cdot e(1-e^{-t})}.$$
Letting $\varepsilon\to 0$, it thus also follows by Mahler compactness that the sequence $\Tilde{\Lambda}_\varepsilon$ has a converging subsequence (in the quotient topology on $\GL_{mn^2t}(\IR)/\GL_{mn^2t}(\IZ)$). Since the packing density is a continuous function with respect to this topology, we also get the existence of a lattice with density $\Delta_t\geq\frac{\vert G_0\vert\zeta(mn^2t)\cdot t}{2^{mn^2t}\cdot e(1-e^{-t})}$.
\end{proof}
\begin{remarks}
\begin{enumerate}
    \item 
First note that the zeta factor quickly approaches one as $n^2mt$ increases and thus can be ignored for the purpose of giving asymptotic bounds for large dimensions.
\item
The lower bounds on the density in Theorem \ref{thm:improvedbounds} have the advantage of producing a factor $t$ in the numerator for lattices constructed from $\calO^t$. Via a simpler approach, taking $f$ to be the indicator function of a ball one finds an $\calO$- lattice $\Lambda$ 
which outperforms the bound above (slightly) only when $t=2$. We record this below. 
\end{enumerate}
\end{remarks}
\begin{prop}\label{prop:simpleimprovedbounds}
With the notations of Theorem \ref{thm:improvedbounds}, there exists a $n^2mt$-dimensional sub-lattice $\Lambda_{\varepsilon}\subset \calO^t$ with packing density
$$\Delta(\Lambda_{\varepsilon})\geq (1-\varepsilon)\cdot\frac{\vert G_0\vert\zeta(mn^2t)}{2^{mn^2t}}$$
in the set of scaled pre-images of codes $\mathbb{L}_p$ for $p$ large enough. Moreover, there exists a $n^2mt$-dimensional lattice $\Lambda$ with density $\Delta(\Lambda)\geq \frac{\vert G_0\vert\zeta(mn^2t)}{2^{mn^2t}}$ for all $t\geq 2$. 
\end{prop}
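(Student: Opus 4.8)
The plan is to mimic the proof of Theorem \ref{thm:improvedbounds}, but replacing the logarithmic cutoff function $f_r$ with the much simpler choice $f = \mathbf{1}_{\mathbb{B}_{mn^2t}(r)}$, the indicator function of a ball of radius $r$ centered at the origin. This function is obviously semi-admissible (it is Riemann-integrable and compactly supported, so the decay bound holds trivially), so Theorem \ref{thm:mainaverage} — or in the concrete form, Theorem \ref{thm:specificaverage} — applies directly to the family $\mathbb{L}_{k,p}$ for any $t\geq 2$ and $nt-t<k<nt$. We have $\int_{\IR^{mn^2t}}f\,dx = V_{mn^2t}\,r^{mn^2t}$, where $V_{mn^2t}$ is the volume of the unit ball.

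First I would choose, for given $\varepsilon\in(0,1)$, a radius $r$ so that
$$V_{mn^2t}\cdot r^{mn^2t}=(1-\varepsilon)\cdot \vert G_0\vert\cdot\zeta(mn^2t)\cdot \Vol(\calO^t).$$
Then Theorem \ref{thm:specificaverage} guarantees, for all sufficiently large primes $p$ in the family, the existence of a lattice $\Lambda=\beta_p\phi_p^{-1}(C)\in\mathbb{L}_{k,p}$ of covolume $\Vol(\calO^t)$ with
$$\sum_{x\in\Lambda'}\mathbf{1}_{\mathbb{B}(r)}(i(x))\leq (1-\varepsilon)\,\vert G_0\vert<\vert G_0\vert.$$
The left-hand side counts primitive vectors of $\Lambda$ of norm strictly less than $r$. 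Next I would invoke Lemma \ref{lemma:unitaction}: the finite group $G_0\subset\calO^\times$ acts on $\Lambda$ preserving the chosen norm, and it acts freely on primitive vectors (a nonzero fixed point $gv=v$ would force, on the line $A_\IR v$, a nontrivial relation, contradicting primitivity — or more simply, $G_0$ acts freely on $\Lambda\setminus\{0\}$ since it sits inside the units acting on the torsion-free module, and it preserves primitivity). Hence if $\Lambda$ had any primitive vector $v$ with $\|i(v)\|<r$, its orbit $G_0\cdot v$ would contribute $\vert G_0\vert$ to the sum above, a contradiction. Therefore every primitive vector of $\Lambda$ has norm at least $r$, i.e. $\lambda_1(\Lambda)\geq r$, and so
$$\Delta(\Lambda)\geq \frac{V_{mn^2t}\,r^{mn^2t}}{2^{mn^2t}\,\Vol(\calO^t)}=(1-\varepsilon)\cdot\frac{\vert G_0\vert\,\zeta(mn^2t)}{2^{mn^2t}},$$
which is the asserted bound for $\Lambda_\varepsilon$; note this $\Lambda$ genuinely lies in $\mathbb{L}_p$, hence is a scaled pre-image of a code, hence up to the fixed scaling $\beta_p$ sits inside $\calO^t$, giving the sublattice claim. (No appeal to Proposition \ref{prop:productminima} is needed here, which is what makes this argument simpler and why the factor $t$ disappears.) Finally, letting $\varepsilon\to 0$ and using Mahler compactness in $\GL_{mn^2t}(\IR)/\GL_{mn^2t}(\IZ)$ together with continuity of the packing density functional, a convergent subsequence of the $\Lambda_\varepsilon$ yields a lattice $\Lambda$ with $\Delta(\Lambda)\geq \vert G_0\vert\,\zeta(mn^2t)/2^{mn^2t}$ for every $t\geq 2$.

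I do not anticipate a serious obstacle: the only point requiring a little care is the freeness of the $G_0$-action on primitive vectors and the norm-invariance, both of which are exactly what Lemma \ref{lemma:unitaction} provides (and the freeness follows since $G_0\subset\calO^\times$ and $\Lambda$ is a torsion-free $\calO$-module, so $g\cdot x=x$ with $x\neq 0$ forces $g=1$). The mild bookkeeping is just checking that $\Lambda_\varepsilon$, being of the form $\beta_p\phi_p^{-1}(C)$, is (a rescaling of) an honest sublattice of $\calO^t$, which is immediate from the construction. The comparison remark — that this bound beats the one in Theorem \ref{thm:improvedbounds} precisely when $t=2$ — is just the elementary inequality $1 > t/(e(1-e^{-t}))$, which holds iff $e(1-e^{-t})>t$, i.e. iff $t=2$ (for $t\geq 3$ one has $e(1-e^{-t})<e<t$, and equality fails at $t=2$ only barely: $e(1-e^{-2})\approx 2.35>2$), so I would simply note this without further elaboration.
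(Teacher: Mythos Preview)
Your proposal is correct and follows essentially the same approach as the paper: take $f=\mathbf{1}_{\IB(r)}$, choose $r$ so that $\Vol(\IB(r))=(1-\varepsilon)\vert G_0\vert\zeta(mn^2t)\Vol(\calO^t)$, apply Theorem \ref{thm:specificaverage} to find $\Lambda\in\mathbb{L}_p$ with $\vert\IB(r)\cap\Lambda'\vert<\vert G_0\vert$, then use the free $G_0$-action on primitive vectors to force this count to zero, and finish with Mahler compactness. Your write-up is slightly more detailed than the paper's (which simply observes that the count must be a multiple of $\vert G_0\vert$ and hence zero), but the argument is the same.
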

\begin{proof}
Take $f$ to be the indicator function of a ball of radius $r$ and let $r$ be chosen so that $\Vol(\IB(r))=(1-\varepsilon)\vert G_0\vert\zeta(mn^2t)\Vol(\calO^t)$. Applying Theorem \ref{thm:specificaverage}, there must be for large enough $p$ a lattice $\Lambda_\varepsilon$ in $\mathbb{L}_p$ (with the notations of the theorem) such that 
\begin{equation}\label{eq:orbittrick}\vert \IB(r)\cap\Lambda_\varepsilon'\vert\leq (1-\varepsilon)\vert G_0\vert.\end{equation}
Having arranged for $f$ to be $G_0$-invariant, the left hand side of \eqref{eq:orbittrick} must be a multiple of $\vert G_0\vert$ and thus is forced to equal $0$. This gives a lower bound on the shortest vector leading to the desired packing density for $\Lambda_\varepsilon$, while the second statement follows again by Mahler compactness.
\end{proof}
Taking $t=2$ is often the most advantageous in view of optimizing the packing density in relation to the dimension of the lattice. Nevertheless, having improved bounds for arbitrary $t\geq 2$ should be useful.

\subsection{Classification of finite subgroups of $\calO^\times$ and bounds.}
In order to examine the density of lattice packings that can be achieved via this method, it is necessary to understand which finite groups $G_0<\calO^{\times}$ can occur for $\IQ$-division algebras. This classification was completely carried through by Amitsur \cite{Amitsur1955FiniteSO}. As outlined in more detail in \cite[2.2--3]{gargava2021lattice}, we summarize some cases that lead to new, dense packings. \par
To that end, we recall that recently dense packings were found in special cases of our construction by Venkatesh \cite{VenkateshBounds} when $A=K=\IQ(\zeta_m)$ and by Vance \cite{VanceImprovedBounds} when $A=\left(\frac{-1,-1}{\IQ}\right)$ by exploiting that the respective $\calO$-lattices are invariant under $\IZ/m\IZ$ and the binary tetrahedral group $\mathfrak{T}^*\cong \SL_2(\IF_3)$ of order $24$, respectively. The most spectacular lattice packing densities in the cyclotomic case then occur when maximizing the ratio $\frac{\vert \IZ/m\IZ\vert}{[\IQ(\zeta_m):\IQ]}=\frac{m}{\varphi(m)}$ whereas in the Hurwitz lattice case the improved bounds are obtained in dimensions $4t$. The first result is that in the more general context of division algebras we may in some sense combine these two improvements: 
\begin{prop}
Assume $m$ is a positive integer such that $2$ has odd order modulo $m$. Then the algebra $\IQ(\zeta_m)\otimes_\IQ\left(\frac{-1,-1}{\IQ}\right)$ is a division algebra with center $\IQ(\zeta_m)$ and has a maximal $\IZ[\zeta_m]$-order $\calO$ with subgroup $\mathfrak{T}^*\times \IZ/m\IZ\subset\calO^\times$. 
\end{prop}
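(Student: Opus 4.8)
The plan is to verify three assertions: that $B:=\IQ(\zeta_m)\otimes_\IQ\left(\frac{-1,-1}{\IQ}\right)$ is a division algebra; that its center is $\IQ(\zeta_m)$; and that a suitable maximal order contains the stated finite group. The center claim is immediate: for any field $F$ and any quaternion algebra $Q$ over $\IQ$, the tensor product $F\otimes_\IQ Q$ is a quaternion algebra over $F$ with center $F$, since $Z(F\otimes_\IQ Q)=F\otimes_\IQ Z(Q)=F\otimes_\IQ\IQ=F$. So with $F=\IQ(\zeta_m)$ we get a central simple $\IQ(\zeta_m)$-algebra with $[B:\IQ(\zeta_m)]=4$, i.e. $n=2$ in the notation of the paper.

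The key point is showing $B$ is a division algebra, equivalently that the Hamilton quaternions $\left(\frac{-1,-1}{\IQ}\right)$ remain nonsplit after base change to $\IQ(\zeta_m)$. The Hamilton quaternion algebra over $\IQ$ is ramified exactly at the archimedean place and at $2$. Base change $F\otimes_\IQ Q$ splits if and only if every place of $\IQ$ at which $Q$ ramifies splits in a way that kills the local invariant — precisely, $B$ is nonsplit provided there is a place $w$ of $\IQ(\zeta_m)$ above $2$ (or above $\infty$) whose local degree over $\IQ_2$ (resp. $\IR$) is odd, since the local invariant $1/2\in\IQ/\IZ$ multiplies by the local degree. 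For the prime $2$: the local degree of $\IQ(\zeta_m)$ at a prime above $2$ equals the order of $2$ in $(\IZ/m'\IZ)^\times$ where $m=2^a m'$ with $m'$ odd (the $2$-part of $m$ only contributes ramification, not residue/inertia degree in the sense that matters, and here one should be careful — but when $2$ has odd order modulo $m$, in particular $m$ is odd, so $m=m'$). The hypothesis that $2$ has odd order modulo $m$ then says exactly that this local degree is odd, hence the invariant $\mathrm{inv}_w(B)=[\, \IQ(\zeta_m)_w:\IQ_2\,]\cdot\tfrac12=\tfrac12\neq 0$ in $\IQ/\IZ$, so $B$ is ramified at $w$ and therefore a division algebra. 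This is the step I expect to require the most care: pinning down the local degree at $2$ and making sure the hypothesis is used correctly (noting that $2$ having odd order mod $m$ forces $m$ odd, which simplifies the ramification bookkeeping).

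For the order and the finite subgroup: the Lipschitz (or Hurwitz) order inside $\left(\frac{-1,-1}{\IQ}\right)$ contains the binary tetrahedral group $\mathfrak{T}^*\cong\SL_2(\IF_3)$ of order $24$ (for the Hurwitz order; the Lipschitz order contains the quaternion group $Q_8$, but $\mathfrak{T}^*$ sits in the Hurwitz maximal order). Tensoring, $\IZ[\zeta_m]\otimes_\IZ(\text{Hurwitz order})$ is an $\IZ[\zeta_m]$-order in $B$ contained in some maximal $\IZ[\zeta_m]$-order $\calO$ (maximal orders exist and every order is contained in one, by the generalities recalled in Section~\ref{sec:two}). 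Inside $\calO$ we then have the units $\mathfrak{T}^*$ (from the quaternion factor, sitting in $1\otimes(\text{Hurwitz})^\times$) and $\zeta_m$ (a root of unity in $\IZ[\zeta_m]^\times=Z(\calO)^\times\cap\calO$), generating a subgroup of $\calO^\times$. Since $\zeta_m$ is central it commutes with $\mathfrak{T}^*$, and $\langle\zeta_m\rangle\cap\mathfrak{T}^*=\{1\}$: any element of the intersection lies in $Z(B)=\IQ(\zeta_m)$, but the only elements of $\mathfrak{T}^*$ that are central in the quaternion algebra are $\pm 1$, and $-1$ is not in $\langle\zeta_m\rangle$ because $m$ is odd. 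Hence the subgroup generated is the internal direct product $\mathfrak{T}^*\times\IZ/m\IZ$, as claimed.

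One remark to flag: the Hurwitz order is a maximal order in $\left(\frac{-1,-1}{\IQ}\right)$, and one should either take $\calO$ to be a maximal $\IZ[\zeta_m]$-order containing $\IZ[\zeta_m]\otimes_\IZ(\text{Hurwitz order})$, or invoke that the tensor of a maximal $\IZ$-order with the ring of integers of an unramified-away-from-the-right-places extension can be maximal; for the statement as given only the existence of \emph{a} maximal order containing the group is needed, so the former suffices and avoids extra hypotheses. The whole argument is thus: (i) center by base-change of centers; (ii) division algebra by computing the local invariant at $2$ using the odd-order hypothesis and Albert–Brauer–Hasse–Noether/local invariants; (iii) the finite subgroup by embedding $\mathfrak{T}^*$ via the quaternion factor and $\mu_m$ via the cyclotomic factor into a maximal order and checking they generate a direct product.
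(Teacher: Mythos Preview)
Your argument is correct and in fact more informative than the paper's own proof, which consists solely of a citation to Amitsur's classification \cite[Theorems~6a,~7]{Amitsur1955FiniteSO}. Where the paper defers entirely to Amitsur, you give a direct and self-contained verification: the center computation is immediate, the division-algebra claim is reduced to a local-invariant calculation at the primes above~$2$ (this is exactly where the odd-order hypothesis enters, via the residue degree of~$2$ in $\IQ(\zeta_m)$ being odd), and the finite subgroup is exhibited concretely by tensoring the Hurwitz order with $\IZ[\zeta_m]$ and passing to a maximal order. The paper's route has the virtue of brevity and of tying the result to the broader classification that motivates the choice of algebras elsewhere in the paper; your route has the virtue of making transparent \emph{why} the hypothesis on~$2$ is exactly what is needed, and of avoiding any dependence on Amitsur's rather intricate case analysis. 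One small point worth making explicit in your write-up: for $m\ge 3$ odd the field $\IQ(\zeta_m)$ is totally imaginary, so all archimedean invariants of the base-changed algebra vanish automatically and the finite place above~$2$ is the only one that needs checking.
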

\begin{proof}
See \cite[Theorems 6a, 7]{Amitsur1955FiniteSO}. 
\end{proof}
In particular, we obtain for $m$ satisfying the parity condition above the existence of lattices $\Lambda_m$ in dimension $8\varphi(m)$ satisfying: 
$$\Delta(\Lambda)\geq \frac{24m\zeta(8\varphi(m))}{2^{8\varphi(m)}}>\frac{24m}{2^{8\varphi(m)}}. $$
via lifting codes as in Theorem \ref{thm:specificaverage} and Proposition \ref{prop:simpleimprovedbounds}. 
By maximizing the ratio $m/\varphi(m)$ under the additional parity condition, we arrive at: 
\begin{prop}\label{prop:cycloHurwitz}
Using the construction above and letting 
$$m_k=\prod_{\substack{p\leq k \text{ prime}\\2\nmid\ord_2p}}p,$$
we obtain lattice packings in dimension $8\varphi(m_k)$ of density 
\begin{equation}\label{eq:cycloquatdensity}
    \Delta \geq C(\log\log \varphi(m_k))^{7/24}\cdot \frac{24\cdot\varphi(m_k)}{2^{8\varphi(m_k)}}
    \end{equation}
    
for some fixed constant $C>0$. Moreover, for any $C<1$ the bound in \eqref{eq:cycloquatdensity} is valid for $m_k$ large enough. 
\end{prop}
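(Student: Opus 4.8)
My plan is to feed the preceding proposition into Proposition~\ref{prop:simpleimprovedbounds} and then reduce the claimed density bound to a Mertens-type estimate for the set of primes with $\ord_2 p$ odd, which is exactly where the exponent $7/24$ enters (through Hasse's density theorem). First I would check that $m_k$ satisfies the hypothesis of the preceding proposition, namely that $2$ has odd order modulo $m_k$: by the CRT, $(\IZ/m_k\IZ)^\times\cong\prod_{p\mid m_k}(\IZ/p\IZ)^\times$, so $\ord_2 m_k=\operatorname{lcm}_{p\mid m_k}\ord_2 p$ is a least common multiple of odd numbers, hence odd. Thus $A:=\IQ(\zeta_{m_k})\otimes_\IQ\left(\frac{-1,-1}{\IQ}\right)$ is a $\IQ$-division algebra with center $K=\IQ(\zeta_{m_k})$, $n^2=[A:K]=4$ and $m=[K:\IQ]=\varphi(m_k)$, admitting a maximal $\IZ[\zeta_{m_k}]$-order $\calO$ with $G_0:=\mathfrak{T}^*\times\IZ/m_k\IZ\subseteq\calO^\times$ of order $24m_k$. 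Applying Proposition~\ref{prop:simpleimprovedbounds} with $t=2$ gives a lattice in dimension $n^2mt=8\varphi(m_k)$ of density at least $|G_0|\,\zeta(8\varphi(m_k))\,2^{-8\varphi(m_k)}=\frac{24\varphi(m_k)}{2^{8\varphi(m_k)}}\cdot\frac{m_k}{\varphi(m_k)}$, so everything reduces to bounding $m_k/\varphi(m_k)$ from below in terms of $\varphi(m_k)$.

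Write $S=\{\,p:\ord_2 p\text{ odd}\,\}$, so $m_k=\prod_{p\in S,\,p\leq k}p$ and $m_k/\varphi(m_k)=\prod_{p\in S,\,p\leq k}(1-1/p)^{-1}$. By Hasse's theorem $S$ has natural density $7/24$, and since $S$ is assembled from primes with prescribed splitting in the Kummer extensions $\IQ(\zeta_{2^j},2^{1/2^j})$ one has an effective version of this density statement (with a good enough error term to run partial summation). From it one gets $\theta_S(x):=\sum_{p\in S,\,p\leq x}\log p=\frac{7}{24}x\,(1+o(1))$ and, by partial summation, $\sum_{p\in S,\,p\leq x}1/p=\frac{7}{24}\log\log x+c_S+o(1)$ for a constant $c_S$; exponentiating and absorbing the convergent correction $\sum_{p\in S}\sum_{j\geq 2}(jp^j)^{-1}$ yields $m_k/\varphi(m_k)=C_S\,(\log k)^{7/24}(1+o(1))$ with $C_S>0$. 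On the other hand $\log\varphi(m_k)=\theta_S(k)-\log\!\big(m_k/\varphi(m_k)\big)=\frac{7}{24}k\,(1+o(1))$, hence $\log\log\varphi(m_k)=\log k+O(1)\sim\log k$. Combining, $m_k/\varphi(m_k)=C_S\,(\log\log\varphi(m_k))^{7/24}(1+o(1))$, which proves \eqref{eq:cycloquatdensity} for any fixed $C<C_S$ and $m_k$ large; the existence of a lattice actually attaining the density then follows formally by Mahler compactness exactly as in the proof of Proposition~\ref{prop:simpleimprovedbounds}.

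The sharper assertion — validity for any $C<1$ once $m_k$ is large — is the statement that $C_S\geq 1$, and obtaining a good lower bound on $C_S$ is the step I expect to be the main obstacle. Concretely $\log C_S=\sum_{p}\big(\mathbf{1}_S(p)-\frac{7}{24}\big)\big(-\log(1-1/p)\big)+\frac{7}{24}\gamma+\sum_{p\in S}\sum_{j\geq 2}(jp^j)^{-1}$, and I would evaluate the first sum by splitting off an explicit finite part over small primes (note $S$ begins $7,23,31,47,71,\dots$) and bounding the tail using the quantitative density estimate for $S$. The qualitative form (some $C>0$) is by contrast soft, needing only a lower bound on the lower density of $S$; it is only the precise constant that requires the effective form of Hasse's theorem together with careful numerical estimates.
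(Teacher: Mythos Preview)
Your approach is correct in substance, and in fact the paper does not prove this proposition at all: it simply cites \cite[Theorem~30]{gargava2021lattice}. So you are essentially reconstructing the argument from that reference, and your outline---apply Proposition~\ref{prop:simpleimprovedbounds} with $t=2$ to the quaternion order over $\IQ(\zeta_{m_k})$, then reduce the density inequality to the Mertens-type asymptotic $m_k/\varphi(m_k)\sim C_S(\log\log\varphi(m_k))^{7/24}$ via Hasse's density theorem for the set $S$---is exactly the natural and intended route.

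Two remarks. First, your displayed expression for $\log C_S$ appears to double-count the higher prime-power contribution: the term $\sum_{p\in S}\sum_{j\ge 2}(jp^j)^{-1}$ is already contained in $-\log(1-1/p)$, so once you write $\log C_S=\lim_{x\to\infty}\bigl(\sum_{p\in S,\,p\le x}(-\log(1-1/p))-\tfrac{7}{24}\log\log x\bigr)$ you should get simply $\log C_S=\sum_p(\mathbf{1}_S(p)-\tfrac{7}{24})(-\log(1-1/p))+\tfrac{7}{24}\gamma$, without the extra tail. Second, you are right that the qualitative claim (some $C>0$) is soft, and that the sharper assertion ``any $C<1$'' amounts precisely to $C_S\ge 1$; this is indeed a numerical verification (the small primes in $S$ are sparse---$7,23,31,47,\dots$---which pulls $C_S$ down, while the $\tfrac{7}{24}\gamma$ term and the eventual density pull it up), and the paper defers it to the cited reference rather than carrying it out.
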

\begin{proof}
This is \cite[Theorem 30]{gargava2021lattice}. 
\end{proof}
Moreover, there are several other ways to obtain dense packings in new dimensions via the division algebra approach. We refer the reader to the discussion in \cite{gargava2021lattice} but simply restate \cite[Prop 64]{gargava2021lattice}:
\begin{prop}\label{prop:loglogimprovement}
There exists an infinite sequence of dimensions $\{d_n\}$ in which a packing density 
$$\Delta_{d_n}\geq \frac{1}{2}\log\log d_n \frac{d_n}{2^{d_n}}$$
is achieved and the lattice achieving this packing density is invariant under the action of a non-commutative group.
\end{prop}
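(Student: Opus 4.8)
The plan is to combine the abstract machinery of Theorem \ref{thm:improvedbounds} (or the simpler Proposition \ref{prop:simpleimprovedbounds}) with a judicious choice of non-commutative finite subgroup $G_0 \subset \calO^\times$ drawn from Amitsur's classification, so that the ratio $|G_0|/(n^2 m)$ grows like $\tfrac{1}{2}\log\log(n^2mt)$. Since Proposition \ref{prop:simpleimprovedbounds} already gives a lattice $\Lambda$ in dimension $d = n^2 m t$ with $\Delta(\Lambda) \geq |G_0| \zeta(d)/2^d$, and $\zeta(d) \to 1$, it suffices to exhibit, for infinitely many $d$, a $\IQ$-division algebra $A$ with center $K$, a maximal $\calO_K$-order $\calO$, and a non-abelian finite group $G_0 \le \calO^\times$ with $|G_0| \ge \tfrac{1}{2} d \log\log d \cdot (1+o(1))$, equivalently $|G_0|/(n^2 m) \ge \tfrac12 t \log\log d\,(1+o(1))$ after absorbing the factor $t$.

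First I would recall from \cite[2.2--3]{gargava2021lattice} and \cite{Amitsur1955FiniteSO} the relevant families. The cleanest source of a non-commutative $G_0$ with a large order-to-dimension ratio is a group of the form $Q \times \IZ/m\IZ$, where $Q$ is a fixed non-abelian group (e.g.\ the quaternion group of order $8$, or the binary tetrahedral group $\mathfrak{T}^*$ of order $24$) realized inside the units of an order in a quaternion algebra $B$, and $\IZ/m\IZ$ coming from $\IQ(\zeta_m)$; one takes $A = \IQ(\zeta_m) \otimes_\IQ B$ with center $K = \IQ(\zeta_m)$, subject to the splitting/parity condition that makes $A$ a division algebra, exactly as in the Proposition preceding \ref{prop:cycloHurwitz}. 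Then $n^2 m = 4\varphi(m)$ and $|G_0| = |Q| \cdot m$, so $\Delta(\Lambda) \gtrsim \tfrac{|Q| m}{2^{d}}$ with $d = 4\varphi(m)t$. Choosing $m = m_k = \prod_{p \le k} p$ over the primes satisfying the parity constraint and invoking the Mertens-type estimate $m_k/\varphi(m_k) \asymp \log\log m_k \asymp \log\log d$ (as already used in Proposition \ref{prop:cycloHurwitz}), one gets $\Delta \gtrsim |Q| \cdot \tfrac{\varphi(m_k) \log\log d}{2^d} \cdot t$, and the constant $|Q| t / 4 \ge \tfrac12$ is arranged by taking, say, $t = 2$ and $|Q| = 8$ (giving constant $4$) or even just $t=2$, $|Q|=8$, which comfortably beats $\tfrac12$. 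The factor $\tfrac12$ in the statement is therefore a conservative bound that holds for all large $d_n$ in this sequence. The dimensions $d_n = 4\varphi(m_k) t$ form an infinite increasing sequence, and the acting group $Q \times \IZ/m_k\IZ$ is non-commutative since $Q$ is, which is the final assertion.

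The main obstacle — really the only non-formal point — is verifying that such $A$ is genuinely a division algebra and that $\calO$ can be taken maximal with $G_0$ sitting inside $\calO^\times$: this is where Amitsur's classification and the tensor-product/ramification bookkeeping enter, and it is precisely the content of \cite[Theorems 6a, 7]{Amitsur1955FiniteSO} and \cite[Theorem 30]{gargava2021lattice}. Given those inputs, the rest is a direct application of Proposition \ref{prop:simpleimprovedbounds} together with the asymptotic $m_k/\varphi(m_k) = (e^{\gamma}+o(1))\log\log m_k$; one then notes $\log\log m_k = \log\log d_n + o(1)$ since $d_n$ and $m_k$ differ by a polynomially-bounded factor, and simplifies the constant. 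In fact, as the excerpt notes, this is exactly \cite[Prop 64]{gargava2021lattice}, so I would present the argument as: apply Proposition \ref{prop:simpleimprovedbounds} with $t=2$ to the division algebra and order furnished by the cyclotomic-quaternion construction above, then read off the density bound from the Mertens estimate, discarding the (super-unity) extra constant factor to land on $\tfrac12 \log\log d_n \cdot d_n 2^{-d_n}$.
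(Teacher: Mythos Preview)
There is a genuine gap. Your construction $A=\IQ(\zeta_m)\otimes_\IQ\left(\tfrac{-1,-1}{\IQ}\right)$ with $G_0=Q\times\IZ/m\IZ$ is exactly the one treated in Proposition \ref{prop:cycloHurwitz}, and that proposition does \emph{not} give the full $\log\log$ factor: the density there is only $\asymp (\log\log d)^{7/24}\cdot d\cdot 2^{-d}$. The reason is the parity condition ``$2$ has odd order modulo $m$'' required for $A$ to remain a division algebra. This forces you to omit from $m_k$ all primes $p$ for which $\ord_p(2)$ is even, and those primes have positive density. Consequently $\prod_{p\mid m_k}\tfrac{p}{p-1}$ only grows like $(\log k)^{7/24}$ rather than $\log k$, so your claimed Mertens estimate $m_k/\varphi(m_k)\asymp\log\log m_k$ is false for this restricted $m_k$. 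No choice of $|Q|$ or $t$ repairs this, since the defect is in the exponent of $\log\log$, not in the constant.

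The paper avoids this obstruction by a different choice of algebra and group: it takes $A=\left(\tfrac{-1,-1}{\IQ(\zeta_m+\zeta_m^{-1})}\right)$ over the maximal real subfield, with $G_0$ the dihedral group of order $2m$. Because $\IQ(\zeta_m+\zeta_m^{-1})$ is totally real, $(-1,-1)$ is ramified at every archimedean place and $A$ is automatically a division algebra, with essentially no arithmetic constraint on $m$. One may then take $m$ to be a full primorial, recover the genuine Mertens asymptotic $m/\varphi(m)\sim e^{\gamma}\log\log m$, and with $[K:\IQ]=\varphi(m)/2$, $n=2$, $t=2$ the dimension is $4\varphi(m)$ and $|G_0|=2m$ gives $\Delta\ge 2m/2^{4\varphi(m)}\ge\tfrac12\,d\log\log d\cdot 2^{-d}$ for large $m$. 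The non-commutativity comes from the dihedral group rather than from a quaternion-group factor.
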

\begin{proof}
  See \cite[Prop 31]{gargava2021lattice}. The division algebras in question are $\left( \frac{-1,-1}{\IQ[\zeta_m + \zeta_m^{-1}]}\right)$ for $m$ a product of primes maximizing $m/\varphi(m)$ under suitable conditions. The extra symmetries here come from the action of the dihedral group with $2m$ elements on lattices in dimension $4\varphi(m)$.
\end{proof}
One of the main advantages in obtaining such lattices via lifts of codes is that, at least in theory, such lattices can be explicitly found by searching a finite set of parameters as opposed to, say, the averaging results in \cite{gargava2021lattice}. We conclude by a discussion of such effectivity questions.
\section{Notes on effectivity}\label{sec:five}
Our results such as Theorem \ref{thm:specificaverage} imply that dense lattices in dimension $mn^2t$ can be found among pre-images of codes in characteristic $p$ as $p\to\infty$. In this last section we show how large it suffices to take $p$ in order to guarantee a lattice of packing density greater than $(1-\varepsilon)\frac{\vert G_0\vert}{2^{mn^2t}} $ is found, with $G_0<\calO^*$ designating the units of finite order in $\calO$.\par 
\subsection{Varying the division ring}
We first focus on the case of $t=2$ in Theorem \ref{thm:improvedbounds} when in fact the better bounds are obtained by taking the simpler indicator function $f=\mathds{1}_{\IB(r)}$ of a ball of appropriate radius as in Proposition \ref{prop:simpleimprovedbounds}. \par
\begin{theorem}\label{thm:effective}
Let $A$ denote central simple division $K$-algebras for number fields $K$ and denote $[A:K]=n^2$ and $[K:\IQ]=m$. Let $\calO$ denote a maximal order in such $A$. Fix $0< \varepsilon< 1$. Assume the prime $\ip\vert p$ in $\calO_K$ is chosen large enough with respect to $m,n$ so that the size of the residue field $\vert \calO_K/\ip\vert =q$  satisfies: \begin{enumerate}
    \item we have as $m,n$ increase the relation:
$$(n^2m)^{2}\Vol(\calO)^{2/(mn^2)}\vert G_0\vert^{-1/(mn^2)}=o(q^{1/mn}),$$
\item the ratio $\frac{\vert M_n(\IF_q)\vert ^2}{\vert M_n(\IF_q)\vert ^2-\vert M_n(\IF_q)\setminus\GL_n(\IF_q)\vert^2}<(1+\varepsilon/3)$.
\end{enumerate}  
Then there exists an effective constant $C_\varepsilon>0$ such that in dimension $2n^2m>C_\varepsilon$ there exists a lattice $\Lambda\in \mathbb{L}_p$ with packing density 
$$\Delta(\Lambda)\geq (1-\varepsilon)\frac{\vert G_0\vert }{2^{2n^2m}}.$$
Here $\mathbb{L}_p$ denotes the set of scaled preimages of generalized codes of $\IF_q$-dimension $2n^2-n$ via the reduction map $\phi_p:\calO^2\to(\calO/\ip\calO)^2$ as in Theorem \ref{thm:specificaverage}. 
\end{theorem}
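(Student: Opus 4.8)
The strategy is to make effective the proof of Proposition~\ref{prop:simpleimprovedbounds} (with $t=2$) by tracking explicitly how large $q=|\calO_K/\ip|$ must be for the averaging argument to produce a lattice whose only short vectors, if any, would contradict the non-degeneracy bound of Lemma~\ref{lemma:lowerboundbadpoints}. Concretely, I would run the averaging inequality of Theorem~\ref{thm:specificaverage} with $f=\mathds 1_{\IB(r)}$ the indicator of a ball of radius $r$ chosen so that $\Vol(\IB(r))=(1-\varepsilon)\,|G_0|\,\zeta(2n^2m)\,\Vol(\calO^2)$, but \emph{at finite $p$} rather than in the limit, quantifying the two error terms that arise in the proof of Theorem~\ref{thm:mainaverage}.

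First I would isolate the ``bad point'' contribution: by Lemma~\ref{lemma:lowerboundbadpoints}, any nonzero $x\in\calO^2$ with $\phi_p(x)\notin U_p$ has $\|\beta_p\, i(x)\|\ge \beta_p\cdot\bigl(\sqrt{2n^2m}\,\N(a)^{1/(4n^2m)}\bigr)\,q^{1/(nm)}$, where $\beta_p=q^{(n k-n^2\cdot 2)/(2n^2m)}=q^{-1/(2n^2m)}$ for $k=2n^2-n$; hence this bound is $\gg q^{1/(nm)-1/(2n^2m)}=q^{(2n-1)/(2n^2m)}$, which is unbounded. So for $q$ large enough — and the first hypothesis, $(n^2m)^2\Vol(\calO)^{2/(mn^2)}|G_0|^{-1/(mn^2)}=o(q^{1/mn})$, is exactly the quantitative statement that the ball radius $r$ (which grows like $(n^2m)\cdot\Vol(\calO)^{1/(n^2m)}\cdot|G_0|^{1/(2n^2m)}$ up to volume-of-ball factors absorbed by Stirling) stays below this bad-point threshold — the ball $\IB(r)$ contains no bad lattice points at all. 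Second, I would bound the main term: following the proof of Theorem~\ref{thm:mainaverage}, the expectation of $\sum_{x\in(\beta_p\phi_p^{-1}(C))',\,\phi_p(x)\in U_p}f(\beta_p i(x))$ is at most $\frac{|R_p|^t}{|U_p|}\cdot\beta_p^{2n^2m}\sum_{x\in(\calO^2)'}f$, which after Möbius inversion and replacing the Riemann sum by the integral is bounded by $\frac{|M_n(\IF_q)|^2}{|U_p|}\cdot(1+o(1))\cdot(\zeta(2n^2m)V)^{-1}\int f$. The second hypothesis, $\frac{|M_n(\IF_q)|^2}{|M_n(\IF_q)|^2-|M_n(\IF_q)\setminus\GL_n(\IF_q)|^2}<1+\varepsilon/3$, controls the ratio $|R_p|^t/|U_p|$ (here $|U_p|=|M_n(\IF_q)|^2-|M_n(\IF_q)\setminus\GL_n(\IF_q)|^2$ for $t=2$), and the Riemann-sum error $\sum_{x\in\calO^2\setminus\{0\}}\beta_p^{2n^2m}f(\beta_p i(x))-V^{-1}\int f$ can be made $<\varepsilon/3$ for $q$ large by a standard Lipschitz/lattice-point count on the shell $r-O(\beta_p)\le \|x\|\le r$, absorbed into $C_\varepsilon$.

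Combining: for $q$ large enough (depending effectively on $\varepsilon$ and on the quantities in (1)--(2)), there exists $\Lambda\in\mathbb L_p$ with $|\IB(r)\cap\Lambda'|\le (1-\varepsilon/2)\,|G_0|\cdot(1+\varepsilon/3)<|G_0|$; since $f$ is $G_0$-invariant and $G_0$ acts freely on primitive vectors (Lemma~\ref{lemma:unitaction}), this count is a nonnegative integer multiple of $|G_0|$, hence equals $0$. Therefore $\lambda_1(\Lambda)>r$, which together with $\Vol(\Lambda)=\Vol(\calO^2)$ gives $\Delta(\Lambda)\ge(1-\varepsilon)\,|G_0|\,\zeta(2n^2m)\,2^{-2n^2m}\ge(1-\varepsilon)\,|G_0|\,2^{-2n^2m}$, valid once $2n^2m>C_\varepsilon$ so that the various $o(1)$ and Stirling estimates kick in.

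The main obstacle is making the first hypothesis do its job \emph{uniformly}: one must check that the ball radius $r$, which depends on $\Vol(\calO^2)=\Vol(\calO)^2$ and on the dimension through the volume $V_{2n^2m}$ of the unit ball (which by Stirling is roughly $(2\pi e/(2n^2m))^{n^2m}$), is genuinely dominated by the bad-point threshold $c_U\, q^{(2n-1)/(2n^2m)}$ — i.e.\ one must verify that after taking $(2n^2m)$-th roots the growth in $n,m$ on the left, namely $(n^2m)\,\Vol(\calO)^{1/(n^2m)}|G_0|^{1/(2n^2m)}$ up to the bounded ball-volume factor, is exactly the left side of hypothesis (1) and hence is $o(q^{1/(nm)})$ as required, leaving room for the exponent $2n-1$ in the numerator to only help. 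This bookkeeping with the several dimension-dependent constants, and confirming that no hidden dependence on $p$ sneaks into $C_\varepsilon$, is where the care is needed; everything else is a quantified rerun of the limiting arguments already established.
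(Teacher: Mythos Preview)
Your overall strategy matches the paper's: make Proposition~\ref{prop:simpleimprovedbounds} effective at $t=2$ with $f=\mathds 1_{\IB(r)}$, split into the bad-point contribution and the balanced main term, and finish with the $G_0$-orbit trick. One computational slip: with $t=2$ and $k=2n-1$ copies of the simple module (so $\IF_q$-dimension $2n^2-n$), Theorem~\ref{thm:specificaverage} gives $\beta_p=q^{(n(2n-1)-2n^2)/(2n^2m)}=q^{-1/(2nm)}$, not $q^{-1/(2n^2m)}$; the rescaled bad-point threshold is then $\gtrsim n\sqrt m\cdot q^{1/(2nm)}$.

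The more substantive issue is that you have the role of hypothesis~(1) backwards. You assert it is ``exactly the quantitative statement'' that $r$ stays below the bad-point threshold, but look at the sign on $|G_0|$: a larger $|G_0|$ makes $r$ \emph{larger}, so that comparison would involve $|G_0|^{+1/(mn^2)}$, not $|G_0|^{-1/(mn^2)}$, and would not produce the prefactor $(n^2m)^2$. In the paper's argument the bad-point inequality is the easier of the two and follows from hypothesis~(1) together with a crude bound $|G_0|=o((n^2m)^{nm})$. Hypothesis~(1) is in fact tailored to the term you dismiss as ``a standard Lipschitz/lattice-point count \dots\ absorbed into $C_\varepsilon$''. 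The paper bounds $\sum_{x\in(\calO^2)'}\mathds 1_{\IB(r)}(\beta_p i(x))\le (r+\beta_p\tau(\calO^2))^{2n^2m}V_{2n^2m}/(\beta_p^{2n^2m}\Vol(\calO^2))$ via the covering radius, so one needs $(1+\beta_p\tau(\calO^2)/r)^{2n^2m}<1+\varepsilon/3$, i.e.\ $2n^2m\cdot\beta_p\tau(\calO^2)/r\to 0$. This is where Lemma~\ref{lemma:packingradius}(3) enters, giving $\tau(\calO)\lesssim d(\calO/\IZ)^{1/(n^2m)}\cdot n\sqrt m\asymp\Vol(\calO)^{2/(n^2m)}\cdot n\sqrt m$; after inserting $\beta_p=q^{-1/(2nm)}$ and $r\sim n\sqrt m\,(|G_0|\Vol(\calO)^2)^{1/(2n^2m)}$, the required smallness condition is \emph{precisely} hypothesis~(1). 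Your ``shell'' argument would still need such a uniform bound on $\tau(\calO^2)$ across the family of division algebras, and it is this dependence on $\Vol(\calO)$ and the dimension that dictates the specific shape of hypothesis~(1); without invoking Lemma~\ref{lemma:packingradius} and carrying out this computation, that step remains a genuine gap.
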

\begin{proof}

Tracing through the proof of Theorem \ref{thm:specificaverage} for $t=2$ and $k=2n-1$ (the only sensible choice), we find that the term 
$$\sum_{x\in (\phi_p^{-1}(C))', \phi_p(x)\in (\calR_p^2\setminus U_p)}f(\beta i(x))$$
is trivial for $f=\mathds{1}_{\IB(r)}$ and some $C \in \mathcal{C}_{k,p}$ as soon as 
\begin{equation}
    r< \left( \N(a)^{1/2n^2m}\cdot n\sqrt{m} \right)  q^{\frac{1}{2nm}},
\end{equation}
via Lemma \ref{lemma:lowerboundbadpoints} and \eqref{eq:lowerboundnorminthm}, where $a=\sum_{g\in G_0}g^*g$. Since $\N(g^*g)=1$ for $g\in G_0$, it is easy to give a uniform lower bound $\N(a)^{1/2n^2m}\geq 1$ or even $\N(a)^{1/2n^2m}\geq \sqrt{|G_0|}$ by the Minkowski determinant inequality. In particular, it suffices to ensure the parameter $r$ satisfies
\begin{equation}\label{eq:effectivepush}
    r< \left(  n\sqrt{m} \right)  q^{\frac{1}{2nm}}.
\end{equation}

The expected value for the remaining terms for fixed characteristic $p$ can then be seen by balancedness to be bounded by: 
\begin{equation}
    \mathbb{E}\leq \frac{q^{n(2n-1)}}{q^{2n^2}-(q^{n^2}-\prod_{i=0}^{n-1}(q^n-q^i))^2}\cdot \sum_{x\in (\calO^2)'} \mathds{1}_{\IB(r)}(\beta_p i(x))
\end{equation}
Now by a classical geometry of numbers result (see \cite[Lemma 4]{CampelloRandom} or \cite[Lemma 3 (2)]{MoustrouCodes}) we can bound 
\begin{equation}
    \sum_{x\in (\calO^2)'} \mathds{1}_{\IB(r)}(\beta_p i(x))\leq (r+\beta_p\tau(\calO^2))^{2n^2m}\cdot \frac{V_{2n^2m}}{\beta_p^{2n^2m}\Vol(\calO^2)},
\end{equation}
where $\tau(\calO^2)$ denotes the packing radius of $\calO^2$ and $V_d$ denotes the volume of the $d$-dimensional unit ball. 
Writing $S_n(q):=\frac{q^{n(2n-1)}}{q^{2n^2}-(q^{n^2}-\prod_{i=0}^{n-1}(q^n-q^i))^2}\geq \beta_p^{2n^2m}$ we arrive at:
\begin{equation}\label{eq:effectiveineq}
    \mathbb{E}\leq \frac{S_n(q)}{\beta_p^{2n^2m}} r^{2n^2m}\frac{V_{2n^2m}}{\Vol(\calO^2)}\cdot \left(1+\frac{\tau(\calO^2)\beta_p}{r}\right)^{2n^2m}
\end{equation}
Observe now that $\frac{S_n(q)}{\beta_p^{2n^2m}}=\frac{\vert M_n(\IF_q)\vert ^2}{\vert M_n(\IF_q)\vert ^2-\vert M_n(\IF_q)\setminus\GL_n(\IF_q)\vert^2}$, so that we can assume $q$ is large enough so that $\frac{S_n(q)}{\beta_p^{2n^2m}}<(1+\varepsilon/3)$. \par
Moreover, for a radius $r$ that yields the density bound, we should have that the volume of the ball of radius $r$ is around $\vert G_0\vert (1-\varepsilon)\Vol(\calO^2)$. By the Stirling formula, 
we may estimate as the dimension grows
$$r\sim \frac{n\sqrt{m}}{\sqrt{\pi e}}(\vert G_0\vert \Vol(\calO^2))^{1/2n^2m},$$
which under our assumptions on $q$ satisfies the inequality \eqref{eq:effectivepush} using a trivial bound like $\vert G_0\vert=o((n^2m)^{nm})$. It now suffices to show that under the parameters above, we can bound $\left(1+\frac{\tau(\calO^2)\beta_p}{r}\right)^{2n^2m}< (1+\varepsilon/3)$ for large enough dimension, since then we get from the inequality \eqref{eq:effectiveineq} the existence of a lattice in $\mathbb{L}_p$ with the desired lower bound on the pac king density. Recall that $\beta_p=q^{-1/2nm}$ with our parameters and we have from Lemma \ref{lemma:packingradius} that 
$$\tau(\calO^2)=\sqrt{2}\cdot \tau(\calO)\leq \Vol(\calO)^{2/n^2m}\cdot (n\sqrt{m}+6)/(\sqrt{2}\pi).$$
We thus have
\begin{equation*}
    \frac{\tau(\calO^2)\beta_p}{r}\lesssim \Vol(\calO)^{1/(mn^2)} \vert G_0\vert^{-1/(2mn^2)}
    q^{-1/(2mn)}
\end{equation*}
But under the assumptions of the theorem on $q$, the result now follows since as $mn^2$ goes to infinity the term $2mn^2\cdot \frac{\tau(\calO^2)\beta_p}{r}$ becomes arbitrarily small. Assuming $p$ and $q$ chosen large enough for each $n,m$ as in the assumptions of the theorem, we may thus view  $\left(1+\frac{\tau(\calO^2)\beta_p}{r}\right)^{2n^2m}$ as a function in $n,m$ which approaches $1$ for large enough dimension $n^2m$. This easily yields an effective constant $C_\varepsilon$ guaranteeing $\left(1+\frac{\tau(\calO^2)\beta_p}{r}\right)^{2n^2m}<(1+\varepsilon/3)$ for $n^2m>C_\varepsilon$. 
\end{proof}
We may then for instance apply this result to specific families of maximal orders in division rings of increasing $\IQ$-dimension. One may arrange for the size of the finite units $G_0$ to be known in this family via Amitsur's results (\cite{Amitsur1955FiniteSO}). Moreover, the computation of the volume $\Vol(\calO)$ reduces to a computation of $\sqrt{d(\calO/\IZ)}$, since the $\IZ$-discriminant $d(\calO/\IZ)$ can be defined as the ideal generated by $\{\det(\tr_{A/\IQ}x_ix_j)_{1\leq i,j\leq [A:\IQ]}\}$ for $x_i\in \calO$ a $\IZ$-basis. Equivalently it is the norm of the $\IQ$-algebra different $\N_{A/\IQ}(\mathfrak{D}(\calO/\IZ))$. We may then write (\cite[Ex 25.1]{reiner2003maximal}):
$$d(\calO/\IZ)=\N_{K/\IQ}(d(\calO/\calO_K))\cdot d(\calO_K/\IZ)^{n^2},$$
where $d(\calO/\calO_K)$ is just the regular discriminant of the central simple $K$-algebra $A$ and $d(\calO_K/\IZ)$ is the discriminant of the central number field. In particular, when there is some control of the ramification behavior of $A/K$ and we have some upper bounds for the discriminant of $K$, the conditions of Theorem \ref{thm:effective} become entirely explicit. 

\begin{example}
 When $K=\IQ(\zeta_m)$ is a cyclotomic field, one has that 
 \begin{equation}\label{eq:discriminanteffective}
 d(\calO_K/\IZ)=\frac{m^{\varphi(m)}}{\prod_{l\in \IP, l\mid m}l^{\varphi(m)/(l-1)}}.\end{equation}
 When $n=1$, by considering cyclotomic fields $\IQ(\zeta_m)$ Moustrou thus finds via a version of Theorem \ref{thm:effective} in this case effective dense lattices in dimensions $2\varphi(m)$ for large enough $m$ and shows a suitable $q$ can be found in time $O(m^3\log(m))^{\varphi(m)}$, see \cite[Theorem 1, Prop 3.1]{MoustrouCodes}. 
\end{example}
\begin{example}
Similarly, fixing $n=2$, varying $K=\IQ(\zeta_m)$ and considering the quaternion algebra over $K$ $$A=\left(\frac{-1,-1}{\IQ(\zeta_m)}\right)$$
when $2$ has odd order modulo $m$, we can use that $d(\calO_K/\IZ)\leq m^{\varphi(m)}$ and that the discriminant of the Hurwitz integers $d(\calH/\IZ)=2$ to obtain an effective version of Proposition \ref{prop:cycloHurwitz} via Theorem \ref{thm:effective}. We record this below. 
\end{example}

\begin{prop}\label{prop:cycloquaternionseffective}
  Let $m_k=\prod_{\substack{p\leq k \text{ prime}\\2\nmid\ord_2p}}p$ and set $n_k:=8\varphi(m_k)$. Then for any $\varepsilon>0$ there is an effective constant $c_\varepsilon$ such that for $k>c_\varepsilon$ a lattice $\Lambda$ in dimension $n_k$ with density 
  $$\Delta(\Lambda)\geq (1-\varepsilon)\frac{24\cdot m_k}{2^{n_k}}$$
  can be constructed in $e^{4.5\cdot n_k\log(n_k)(1+o(1))}$ binary operations. 
  This construction leads to the asymptotic density of 
  $$\Delta(\Lambda)\geq (1-e^{-n_k})\frac{3\cdot n_k(\log\log n_k)^{7/24}}{2^{n_k}}$$
  in dimension $n_k$. 
\end{prop}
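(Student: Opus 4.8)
The plan is to derive Proposition \ref{prop:cycloquaternionseffective} directly from Theorem \ref{thm:effective} applied to the quaternion algebra $A=\left(\frac{-1,-1}{\IQ(\zeta_{m_k})}\right)$, together with the classification input from Amitsur guaranteeing $\mathfrak{T}^*\times\IZ/m_k\IZ\subset\calO^\times$, so that $|G_0|\geq 24\cdot m_k$ (indeed one may take $G_0$ to be exactly this group of order $24m_k$). Here $n=2$ and $m=[K:\IQ]=\varphi(m_k)$, so the ambient dimension is $2n^2m=8\varphi(m_k)=n_k$, matching the statement, and the density bound $(1-\varepsilon)|G_0|/2^{n_k}\geq(1-\varepsilon)24m_k/2^{n_k}$ is exactly what Theorem \ref{thm:effective} provides once we check its two hypotheses on $q=|\calO_K/\ip|$.

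First I would verify hypothesis (1) of Theorem \ref{thm:effective}. Using the discriminant formula $d(\calO/\IZ)=\N_{K/\IQ}(d(\calO/\calO_K))\cdot d(\calO_K/\IZ)^{n^2}$ from the discussion after that theorem, the fact that $A$ is a quaternion algebra ramified only at finitely many places (here one can bound $\N_{K/\IQ}(d(\calO/\calO_K))$ by a power of $2$ times controlled factors since the Hurwitz order $\calH$ has $d(\calH/\IZ)=2$), and the bound $d(\calO_K/\IZ)\leq m_k^{\varphi(m_k)}$, one gets $\Vol(\calO)^{2/(mn^2)}=d(\calO/\IZ)^{1/(mn^2)}$ of size at most a polynomial in $m_k$. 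Combined with $|G_0|^{-1/(mn^2)}\geq(24m_k)^{-1/(mn^2)}$ and the factor $(n^2m)^2=(4\varphi(m_k))^2$, the left-hand side of (1) is $O(\mathrm{poly}(m_k))$, which is $o(q^{1/(2m_k)})$ once $q$ is a sufficiently large power of $p$; this is exactly the kind of bound that forces the $e^{C n_k\log n_k}$ search space. Hypothesis (2) is a purely group-theoretic estimate: $\frac{|M_n(\IF_q)|^2}{|M_n(\IF_q)|^2-|M_n(\IF_q)\setminus\GL_n(\IF_q)|^2}\to1$ as $q\to\infty$ for fixed $n=2$, so it holds for $q$ large. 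The constraint that $p$ split completely in $\calO_K$ (so that $q=p$, or at least that such $\ip$ exist of every large enough norm) is handled by \v{C}ebotarev as noted after Lemma \ref{lemma:primesexistence}.

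Next I would extract the running time $e^{4.5\cdot n_k\log n_k(1+o(1))}$. The search is over the finite set $\mathbb{L}_p=\{\beta_p\phi_p^{-1}(C):C\in\mathcal{C}_{k,p}\}$ with $k=2n-1=3$ for $t=2$, so $|\mathbb{L}_p|$ is the number of $M_2(\IF_q)$-submodules of $M_2(\IF_q)^{\oplus2}$ isomorphic to $(\IF_q^2)^{\oplus3}$, which is a Gaussian-binomial-type count of size $q^{O(1)}$. The dominant cost is that $q$ itself must be taken of the size dictated by hypothesis (1): solving for the threshold gives $\log q=\Theta(m_k\log m_k)$, hence, since $\varphi(m_k)=m_k/\log\log m_k\cdot(1+o(1))$ and $n_k=8\varphi(m_k)$, one has $m_k=\Theta(n_k\log\log n_k)$ and $\log q=\Theta(n_k\log n_k)$; carefully tracking the constant from the explicit inequality $(n^2m)^2\Vol(\calO)^{2/(mn^2)}=o(q^{1/(2mn)})$ with $mn=2\varphi(m_k)$ yields the constant $4.5$ in the exponent (the factor coming from needing $q^{1/(4\varphi(m_k))}$ to dominate a quantity of size roughly $m_k^{\varphi(m_k)/(4\varphi(m_k))}\cdot\mathrm{poly}$, i.e. $q\gtrsim m_k^{2\varphi(m_k)(1+o(1))}$, so $\log q\approx 2\varphi(m_k)\log m_k\approx \tfrac14 n_k\log(n_k\log\log n_k)\approx\tfrac14 n_k\log n_k$, and the remaining enumeration and lattice-reduction overhead — including running the $O(2^{2mn^2t})$ SMP-type step of Remark \ref{rem:effectiveMinkowski} and Proposition \ref{prop:productminima} — contributes the remainder up to the stated $4.5$). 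Finally, for the asymptotic density I would plug $|G_0|=24m_k$ and $t=2$ into the stronger bound of Theorem \ref{thm:improvedbounds}, giving $\Delta(\Lambda)\geq(1-e^{-n_k})\frac{24m_k\cdot2}{2^{n_k}e(1-e^{-2})}$; using $m_k=\varphi(m_k)\log\log\varphi(m_k)(1+o(1))\cdot C'$ from the prime-counting estimate behind Proposition \ref{prop:cycloHurwitz} and $\varphi(m_k)=n_k/8$, this simplifies to $\Delta(\Lambda)\geq(1-e^{-n_k})\frac{3n_k(\log\log n_k)^{7/24}}{2^{n_k}}(1+o(1))$, absorbing the numerical constant $48/(e(1-e^{-2}))$ and the comparison $\log\log\varphi(m_k)^{7/24}$ into the asymptotics. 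The main obstacle is the bookkeeping in hypothesis (1): one must pin down the implied constant in the $o(\cdot)$ precisely enough — via the explicit discriminant formula \eqref{eq:discriminanteffective} and the ramification of $\left(\frac{-1,-1}{K}\right)$ — to justify that $\log q=(\tfrac14+o(1))n_k\log n_k$ suffices and hence that the exhaustive search runs in $e^{4.5 n_k\log n_k(1+o(1))}$ rather than a larger exponent.
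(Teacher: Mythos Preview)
Your overall strategy matches the paper's exactly: apply Theorem \ref{thm:effective} to $A=\left(\frac{-1,-1}{\IQ(\zeta_{m_k})}\right)$ with $n=2$, $m=\varphi(m_k)$, $t=2$ and $|G_0|=24m_k$, verify its two hypotheses on $q$ via the cyclotomic discriminant bound and effective \v Cebotarev for primes $p\equiv 1\pmod{m_k}$, then bound the running time by the size of the code family.

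However, your arithmetic for the constant $4.5$ is off in several places. First, hypothesis (1) of Theorem \ref{thm:effective} has $q^{1/(mn)}$ on the right, not $q^{1/(2mn)}$; with $mn=2\varphi(m_k)$ and the left side of size $\asymp m_k\varphi(m_k)^2$ (from $(4\varphi(m_k))^2$ times $\Vol(\calO)^{2/(4\varphi(m_k))}\asymp m_k$ via $d(\calO_K/\IZ)\le m_k^{\varphi(m_k)}$), the condition becomes $(m_k\varphi(m_k)^2)^{2\varphi(m_k)}=o(q)$. One must therefore take $\log q\sim 6\varphi(m_k)\log m_k=\tfrac34 n_k\log n_k$, not $\tfrac14 n_k\log n_k$. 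Second, the paper obtains $4.5$ as $6\times\tfrac34$: the family of codes in Theorem \ref{thm:effective} has size $O(p^6)$, and it is this enumeration that dominates; the SMP step and shortest-vector computations are $2^{O(n_k)}$ and are explicitly absorbed into the $o(1)$, so they cannot account for the gap between your $\tfrac14$ and the stated $4.5$. Third, the relation $m_k=\Theta(n_k\log\log n_k)$ is wrong for this restricted product of primes: by Proposition \ref{prop:cycloHurwitz} one has $m_k\asymp\varphi(m_k)(\log\log\varphi(m_k))^{7/24}$, and the asymptotic density line is then immediate from $24m_k\ge 3n_k(\log\log n_k)^{7/24}$ for large $k$, without invoking Theorem \ref{thm:improvedbounds} (which for $t=2$ is in fact slightly weaker than Proposition \ref{prop:simpleimprovedbounds}).
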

\begin{proof}
We consider the quaternion algebras $A_k=\left(\frac{-1,-1}{\IQ(\zeta_{m_k})}\right)$ and exhibit a large enough residue field size $q$ so that the conditions of Theorem \ref{thm:effective} are satisfied. From the discriminant relation \eqref{eq:discriminanteffective} we obtain that the first condition amounts to 
$$(m_k\varphi(m_k)^2)^{2\varphi(m_k)}=o(q).$$
It is convenient to as in \cite[Prop 3.1.]{MoustrouCodes} search for large primes which split completely in $\IQ(\zeta_{m_k})$, and this happens when $p\equiv 1\mod m_k$. Using an effective version of the \v Cebotarev density theorem (see e.g., \cite{effectiveCebo}), one sees that an interval of size, say, $(1/m_k)(m_k)^{6\varphi(m_k)}$ around $m_k^{6\varphi(m_k)}$ must contain such a prime for large enough $m_k$. Such a prime then satisfies $(m_k\varphi(m_k)^2)^{2\varphi(m_k)}=o(p)$ for our choice of $m_k$. Finding a suitable prime $p$ can thus be done in at most around $e^{3/4 n\log n}$ steps. Moreover, for such primes, we have that 
$$\left\vert \frac{\vert M_2(\IF_p)\vert ^2}{\vert M_2(\IF_p)\vert ^2-\vert M_2(\IF_p)\setminus\GL_2(\IF_p)\vert^2}-1 \right\vert=o(e^{-n_k}),$$
which deals with the second condition of Theorem \ref{thm:effective}. The time estimate for enumerating the lattice family is then of $e^{4.5\cdot n_k\log(n_k)(1+o(1))}$ binary operations since the number of codes we consider in Theorem \ref{thm:effective} here amounts to $O(p^6)$. The costs of the remaining computations, such as computing the packing density of lattices, are also exponential in the dimension, but being of cost $2^{O(n_k)}$ do not contribute to the main term of the estimate. We thus obtain an effective version of the bounds in Proposition \ref{prop:cycloHurwitz}, as claimed.
\end{proof}
The density lower bounds above outperform the best known effective lower bounds on the density from cyclotomic fields up to dimensions around $1.98\cdot 10^{46}$ because of the improved constant coming from the size of the binary tetrahedral group ( see also \cite[Fig. 1]{gargava2021lattice}).
\begin{example}
Consider the case when $k=2$ and $m_k=7 \cdot 23=161$. A suitable prime $p$ is then for instance
$$p= (161 \cdot \varphi(161)^2)^{2 \varphi(161)} + 223147$$
of size about $10^{1072}$ and satisfying $p\equiv 1\mod 161$. Using this prime in Theorem \ref{thm:effective} yields a lattice $\Lambda$ in $8\cdot \varphi(161)=1056$ dimensions such that 
$$\Delta(\Lambda)\geq (1-\varepsilon)\frac{3864 }{2^{1056}},  $$
for $\varepsilon \le 10^{-1000}$ in about $10^{10^4}$ bit-operations. Even with the best computers, it is currently far out    of reach to enumerate a basis for such a lattice.  
Nevertheless, this gives a very explicit construction of such a random lattice that achieves a good packing. 
 \end{example}
 
Finally, we note that similarly an effective version of Proposition \ref{prop:loglogimprovement} can be obtained. It seems plausible for such results that as long as only the degree of the center of the division algebras over $\IQ$ is unbounded as in Proposition \ref{prop:cycloquaternionseffective}, the family size of $n$-dimensional lattices to be searched should be $e^{C\cdot n\log(n)(1+o(1))}$ for some constant $C>0$.  

\subsection{Varying the rank $t$}
Finally, we remark that one also obtains effective good asymptotic lattices from our constructions by fixing the division ring $A$ and maximal order $\calO$ and instead varying the rank of the $\calO$-lattices as in Vance's construction \cite{VanceImprovedBounds}. In particular, one obtains an effective version of Vance's construction which we record here. The general case is handled in the same way and is left to the reader. \par
\begin{prop}\label{prop:effectiveVance}
  For any $0<\varepsilon<1$, there exists a lattice in $\mathbb{H}^t$ which is a free rank $t$ module over the ring of Hurwitz integers $\calH$, whose geometric mean of the quaternionic minima satisfy 
  $$\left(\prod_{j=1}^t \min_j(\Lambda)\right)^{1/t}>r$$
  where $r$ is defined by $\Vol(\IB(r))=(1-\varepsilon)\frac{24t \Vol(\calH^t)\cdot \zeta(4t)}{e(1-e^{-t})}$,
  and which, provided the odd prime $p$ satisfies $t^2=o(p)$ and $t$ is large enough, lies in the set of (rescaled) lifts $$\mathbb{L}_p=\{p^{\frac{1-t}{2t}}\phi_p^{-1}(C):C\in \calC_{t+1}\},$$
  where $\phi_p:\calH^t\to(\calH/p\calH)^t\cong M_2(\IF_p)^t$ is the reduction map and $\calC_{t+1}$ is the set of left $M_2(\IF_p)$-submodules of $M_2(\IF_p)^t$ isomorphic to  $t+1$ copies of the simple left module $\IF_p^2$.
\end{prop}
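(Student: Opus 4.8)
The plan is to run the mechanism of Theorem~\ref{thm:improvedbounds} in the special case $A=\left(\frac{-1,-1}{\IQ}\right)$, $\calO=\calH$ (so $n=2$, $m=1$, $[A:\IQ]=4$), with $G_0=\mathfrak{T}^{*}\cong\SL_2(\IF_3)$ of order $24$, and to make the averaging step effective along the template of the proof of Theorem~\ref{thm:effective}, but now with the radial Rogers--Vance weight $f_r$ from the proof of Theorem~\ref{thm:improvedbounds} in place of an indicator and with the $\calH$-rank $t$ (rather than the algebra) growing. Since $A$ is ramified only at $2$ and $\infty$, Lemma~\ref{lemma:primesexistence} provides, for every odd prime $p$, a reduction map $\phi_p\colon\calH^{t}\to(\calH/p\calH)^{t}\cong M_2(\IF_p)^{t}$; we take the code class $\calC_{t+1}$ of Theorem~\ref{thm:specificaverage} (left $M_2(\IF_p)$-submodules of $M_2(\IF_p)^{t}$ isomorphic to $(\IF_p^{2})^{\oplus(t+1)}$), which meets the admissible range $(n-1)t=t<k=t+1<2t=nt$ for every $t\ge2$, and the corresponding normalizing constant is $\beta_p=p^{(nk-n^{2}t)/(n^{2}mt)}=p^{(1-t)/(2t)}$, as in the statement. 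Since $\calH$ is a noncommutative principal ideal domain, each $\phi_p^{-1}(C)$ is a finitely generated torsion-free left $\calH$-module of full rank, hence free of rank $t$, so after rescaling the members of $\mathbb{L}_p$ are free rank-$t$ $\calH$-lattices in $\mathbb{H}^{t}$.

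Next, fix via Lemma~\ref{lemma:unitaction} the symmetric positive definite $a=\sum_{g\in\mathfrak{T}^{*}}g^{*}g$; here $a$ is the central scalar $24$, the induced norm on $\mathbb{H}^{t}$ is $\mathfrak{T}^{*}$-invariant, $\N(a)^{1/8}=\sqrt{24}$, and $\mathfrak{T}^{*}$ acts freely on primitive vectors of every $\Lambda\in\mathbb{L}_p$. Choose $r$ by $\Vol(\IB(r))=(1-\varepsilon)\,\frac{24t\,\Vol(\calH^{t})\,\zeta(4t)}{e(1-e^{-t})}$; by Stirling, $r=\Theta(\sqrt{t})$. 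Now apply the averaging computation of Theorem~\ref{thm:mainaverage} (in the form of Theorem~\ref{thm:specificaverage}) to $f=f_r$, splitting $\sum_{y\in(\beta_p\phi_p^{-1}(C))'}f_r(y)$ according to whether the reduction of $y/\beta_p$ lies in $U_p=\{v:\dim_{\IF_p}M_2(\IF_p)v=4\}$ or not. For the ``degenerate'' part (reduction outside $U_p$), Lemma~\ref{lemma:lowerboundbadpoints} gives $\|i(x)\|\ge2\sqrt{24}\,p^{1/2}$ for every relevant unscaled $x\in\calH^{t}\setminus\{0\}$, hence $\|\beta_p i(x)\|\ge2\sqrt{24}\,p^{1/(2t)}$; since $f_r$ is supported in the ball of radius $re^{1/(4t)}$, this part vanishes identically as soon as $2\sqrt{24}\,p^{1/(2t)}>re^{1/(4t)}$, and this is precisely where the hypothesis that $p$ be large relative to $t$ is used. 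For the remaining part, $U$-balancedness (Corollary~\ref{cor:balancedness} with $R=M_2(\IF_p)$, $V=\IF_p^{2}$, $n_1=2$, $n_2=t+1$, $n_3=2t$) bounds its expectation over $C\in\calC_{t+1}$ by $\frac{p^{2(t+1)}}{|U_p|}\sum_{x\in(\calH^{t})'}f_r(\beta_p i(x))$; Möbius inversion over primitive vectors, the standard ball-counting estimate of \cite[Lemma~4]{CampelloRandom} together with the covering-radius bound for $\calH^{t}$ in Lemma~\ref{lemma:packingradius}, and the elementary count $|U_p|=(1+O(p^{1-2t}))\,p^{4t}$ show this expectation is at most $(1-\varepsilon)\cdot\frac{|G_0|}{mn^{2}}\cdot(1+o_p(1))=(1-\varepsilon)\cdot6\cdot(1+o_p(1))$, which is $<6$ once $p$ is large. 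Hence for $p$ large enough some $\Lambda=\beta_p\phi_p^{-1}(C)\in\mathbb{L}_p$ has $\sum_{y\in\Lambda'}f_r(y)<6$.

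The conclusion then follows as in Theorem~\ref{thm:improvedbounds}: applying Proposition~\ref{prop:productminima}, choose an $A_\IR$-module basis $v_1,\dots,v_t$ of $\Lambda$ with $\|\beta_p i(v_j)\|=\min_j(\Lambda)$; by $\mathfrak{T}^{*}$-invariance of $f_r$ and freeness of the action on primitive vectors,
\[
6>\sum_{y\in\Lambda'}f_r(y)\ \ge\ |G_0|\sum_{j=1}^{t}f_r(\beta_p i(v_j))=24\sum_{j=1}^{t}f_r(\beta_p i(v_j)),
\]
so $\sum_{j}f_r(\beta_p i(v_j))<1/4$, and the piecewise definition of $f_r$ forces $\min_j(\Lambda)\ge re^{(1-t)/(4t)}$ for all $j$ together with $\sum_{j=1}^{t}\log(\min_j(\Lambda)/r)>0$, i.e. $\bigl(\prod_{j=1}^{t}\min_j(\Lambda)\bigr)^{1/t}>r$. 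Undoing the scaling gives the desired free rank-$t$ $\calH$-sublattice of $\mathbb{H}^{t}$. The main obstacle is the vanishing of the degenerate part: it demands $re^{1/(4t)}<2\sqrt{24}\,p^{1/(2t)}$, and since $r=\Theta(\sqrt{t})$ this forces $\log p$ to grow at least like $t\log t$ — so the real content of ``$p$ large enough'' is a super-polynomial lower bound on $p$ in terms of $t$; carrying out this comparison precisely, and the routine but lengthy Stirling and geometry-of-numbers bookkeeping needed to pin down the threshold on $t$ and to absorb all error terms into $\varepsilon$, is where the work concentrates.
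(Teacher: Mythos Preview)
Your argument is essentially the paper's own: the Rogers--Vance weight $f_r$ from the proof of Theorem~\ref{thm:improvedbounds}, the split along $U_p$, the balancedness bound plus lattice-point counting via Lemma~\ref{lemma:packingradius}, and the final minima inequality from Proposition~\ref{prop:productminima}. The paper also carries out the Riemann-sum error estimate you only sketch, obtaining $\Delta(p,t)=O(t)\,\beta_p=O(t)\,p^{(1-t)/(2t)}$ for $k=t+1$, and it is \emph{this} term that gives rise to the hypothesis $t^2=o(p)$ in the statement.

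Where you and the paper part ways is the degenerate contribution. The paper asserts that, for any admissible $k$, the degenerate part vanishes once the support of $f_r$ lies in the ball of radius $2p^{1/4}$; combined with $r=\Theta(\sqrt{t})$ this gives the polynomial condition $p\ge t^{2}$. Your computation instead gives the scaled threshold $2\N(a)^{1/8}p^{(k-t)/(2t)}$, which for the choice $k=t+1$ hard-coded into the statement is only $2\sqrt{24}\,p^{1/(2t)}$, forcing $p\gtrsim t^{t}$. Your exponent is the one dictated by Lemma~\ref{lemma:lowerboundbadpoints} together with $\beta_p=p^{(k-2t)/(2t)}$; the value $p^{1/4}$ would correspond to $k=3t/2$, not to $k=t+1$. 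So your closing diagnosis is correct: under the stated hypothesis $t^{2}=o(p)$ with $k=t+1$, the degenerate part is not forced to vanish, and the paper's exponent $1/4$ appears to be an oversight rather than something you have missed.
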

\begin{proof}
Consider the proof of Theorem \ref{thm:improvedbounds}. Then for any $t\geq 2$ the support of the radial function $f_r(y)$ is contained in the ball of radius $re^{1/mn^2t}=re^{1/4t}$. Choose $r$ such that 
\begin{equation}\label{eq:roftsize}
    \Vol(\IB(r))=(1-\varepsilon)\frac{24t \Vol(\calH^t)\cdot \zeta(4t)}{e(1-e^{-t})}
\end{equation}
Via Stirling's formula, we get from \eqref{eq:roftsize} and the discriminant $d(\calH)=2$ the asymptotic 
\begin{equation}\label{eq:rapproxsize}
r\sim t^{1/(4t)+1/2}\cdot \frac{2^{5/8}}{\sqrt{\pi e}}.
\end{equation}
First consider any $t<k<2t$. Pulling back codes of $\IF_p$-dimension $2k$ as in Theorem \ref{thm:specificaverage}, in order to lift the averaging result we see that the support of $f$ has to be contained in the ball of radius $2p^{1/4}$, so that we arrive at the condition 
$$e^{(1+\ln(t)-2t)/(4t)}\cdot \frac{2^{-3/8}}{\sqrt{\pi} }\sqrt{t}<p^{1/4}.$$
Thus for $t\geq 2$ it in particular suffices to take $p\geq t^2$.
Note that here any odd prime $p$ is unramified and can be used in the construction. 
Inspecting the proof of Theorem \ref{thm:mainaverage} (and ignoring for simplicity the M\"obius inversion step since the zeta factor quicly approaches $1$ for large $t$), we have that 
\begin{align*}
   \mathbb{E}&\leq \frac{p^{d(k)}}{\vert U_p\vert\cdot \beta_p^{4t}}\cdot\sum_{x\in\calO^t\setminus \{0\}}\beta_p^{4t}f_r(\beta_p i(x))\\
    &=\frac{p^{4t}}{p^{4t}-(p^3+p^2-p)^t}\cdot\sum_{x\in\calO^t\setminus \{0\}}\beta_p^{4t}f_r(\beta_p i(x))
\end{align*}
as $\beta_p=p^{k/2t-1}$, and it therefore remains to bound the difference 
\begin{equation}
    \Delta(p,t)=\left |\beta_p^{4t}\cdot\sum_{x\in\calO^t\setminus \{0\}}f_r(\beta_pi(x))-V^{-1}\int_{\IR^{4t}}f_r(x)dx\right|
\end{equation}
recalling here that $f_r$ is the radial function given by
$$f_r(y)=\begin{cases}\frac{1}{4}& \text{ if }0\leq \|y\|<re^{(1-t)/4t} \\
\frac{1}{4t}-\log (\frac{\|y\|}{r})&\text{ if }re^{(1-t)/4t}\leq \|y\|\leq re^{1/4t}\\
0& \text{ else. }
\end{cases}$$
We note that in particular $f_r$ has derivative bounded by $C_r=e^{1/4}/r$. Tiling the support of $f_r$ by Vorono\"i cells of diameter the packing radius $\tau(\beta\calO^t)$, we can bound the error in approximating the Riemann integral via the lattice sum on each individual cell by $\Vol(\calO^t)\beta_p^{4t}\cdot C_r\cdot 2\tau(\beta\calO^t)$. For large enough $p,t$, we may estimate that the support of $f_r$ is covered by $\beta^{-4t}e\cdot \frac{Vol(\IB(r))}{\Vol(\calO^t)}$ cells (with an error that is $o(\Vol(\calO^t))$ as $t\to \infty$), so that we arrive at the total error estimate 
\begin{equation}
    \Delta(p,t)\lesssim 2eC_r\cdot \beta_p\frac{\Vol(\IB(r))}{\Vol(\calO^t)}\cdot \sqrt{t}\cdot \tau(\calO)
\end{equation}
We therefore obtain for $r$ satisfying \eqref{eq:roftsize} the bound as $p,t$ become large of 
$$\Delta(p,t)=O(t)\cdot p^{\frac{k-2t}{2t}}.$$
It now seems most convenient to take $k=t+1$ and we see that in particular the condition $t^2=o(p)$ suffices to guarantee for any given $\varepsilon>0$ that $\Delta(p,t)<\varepsilon$ for large enough rank $t$. We have thus shown that for any $\varepsilon$ we can find $t$ large enough so that under our assumptions on $p$ there exists $\Lambda\in \mathbb{L}_p$ with $\sum_{y\in\Lambda'}f_r(y)\leq (1-\varepsilon)\cdot 6$. The result now follows as in the proof of Theorem \ref{thm:improvedbounds}. 
\end{proof}
We therefore conclude:
\begin{corollary}\label{cor:computationalVance}
  Given any $0<\varepsilon<1$, for large enough $t$ a lattice $\tilde{\Lambda}$ in dimension $4t$ whose packing density satisfies
  $$\Delta(\tilde{\Lambda})\geq (1-\varepsilon)\cdot \frac{24t\zeta(4t)}{2^{4t}\cdot e(1-e^{-t})}$$
  can be constructed with $e^{4t^2\log (t)(1+o(1))}$ bit operations. 
  \end{corollary}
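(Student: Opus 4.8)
The plan is to obtain the corollary by combining Proposition~\ref{prop:effectiveVance} with the effectivity of Minkowski's lemma (Proposition~\ref{prop:productminima} together with Remark~\ref{rem:effectiveMinkowski}); the mathematical content is already contained in Proposition~\ref{prop:effectiveVance}, so what remains is essentially a complexity count. Fix $0<\varepsilon<1$. First I would apply Proposition~\ref{prop:effectiveVance}: for $t$ large and any odd prime $p$ with $t^2=o(p)$, it guarantees a lattice $\Lambda$ lying in the \emph{explicit finite family} $\mathbb{L}_p=\{p^{(1-t)/2t}\phi_p^{-1}(C) : C\in\calC_{t+1}\}\subseteq\mathbb{H}^t\cong\IR^{4t}$, free of rank $t$ over $\calH$, with $\big(\prod_{j=1}^t\min_j(\Lambda)\big)^{1/t}>r$, where $r$ is pinned down by $\Vol(\IB_{4t}(r))=(1-\varepsilon)\frac{24t\,\Vol(\calH^t)\,\zeta(4t)}{e(1-e^{-t})}$. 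Rescaling $\Lambda$ to covolume $1$, running Proposition~\ref{prop:productminima}, and rescaling back yields an $\calH$-lattice $\tilde{\Lambda}\subset\IR^{4t}$ with $\Vol(\tilde{\Lambda})=\Vol(\calH^t)$ and $\lambda_1(\tilde{\Lambda})=\big(\prod_{j=1}^t\min_j(\Lambda)\big)^{1/t}>r$.

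The density bound is then immediate: by the definition of the packing density and the choice of $r$,
\[
\Delta(\tilde{\Lambda})=\frac{\Vol\!\big(\IB_{4t}(\lambda_1(\tilde{\Lambda}))\big)}{2^{4t}\,\Vol(\tilde{\Lambda})}>\frac{\Vol(\IB_{4t}(r))}{2^{4t}\,\Vol(\calH^t)}=(1-\varepsilon)\,\frac{24t\,\zeta(4t)}{2^{4t}\,e(1-e^{-t})}.
\]
Hence \emph{any} lattice produced by an exhaustive search through $\mathbb{L}_p$ that meets the minima condition achieves the claimed density, and it only remains to bound the cost of that search.

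For the complexity I would argue in three stages. (i) \emph{Choosing $p$}: take $p$ the least prime exceeding $t^2\log t$; it exists in $[t^2\log t,\,2t^2\log t]$ by Bertrand's postulate, is odd for $t$ large and satisfies $t^2=o(p)$, so Proposition~\ref{prop:effectiveVance} applies, and $\log p=2\log t\,(1+o(1))$; finding it costs $t^{O(1)}$ bit operations. (ii) \emph{Enumerating $\calC_{t+1}$}: as a left $M_2(\IF_p)$-module, $M_2(\IF_p)^{\oplus t}$ is a sum of $2t$ copies of the simple module $\IF_p^2$, so each $C\in\calC_{t+1}$ is the submodule spanned by $t+1$ elements of the $2t$-dimensional multiplicity space; there are thus at most $p^{2t(t+1)}=p^{2t^2(1+o(1))}$ candidates to loop over. (iii) \emph{Processing a candidate}: for each $C$ one forms $p^{(1-t)/2t}\phi_p^{-1}(C)$, runs the effective successive-$\calH$-minima routine of Remark~\ref{rem:effectiveMinkowski} — of cost $O(2^{8t})=2^{O(t)}$ since here $m=1$, $n=2$ — to build $\tilde{\Lambda}$ via Proposition~\ref{prop:productminima}, and reads off $\lambda_1(\tilde{\Lambda})$ and hence $\Delta(\tilde{\Lambda})$, halting once the bound is met. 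Multiplying the three contributions, the total is $t^{O(1)}+p^{2t^2(1+o(1))}\cdot 2^{O(t)}=e^{2t^2\log p\,(1+o(1))}=e^{4t^2\log t\,(1+o(1))}$; the constant $4$ is $2\times 2$, the $\sim 2t^2$ free $\IF_p$-parameters weighted by $\log p\sim 2\log t$.

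Granted Proposition~\ref{prop:effectiveVance}, there is no genuine obstacle here, only bookkeeping. The one point that needs a moment's care is confirming that every step other than the code enumeration — locating $p$, forming each preimage lattice, the Minkowski reduction, the shortest-vector computation — runs in $e^{o(t^2\log t)}$ time, so that stage~(ii) dominates and the leading constant is exactly $4$; this follows from Remark~\ref{rem:effectiveMinkowski} and standard lattice-algorithm bounds, all of which are exponential only in the dimension $4t$.
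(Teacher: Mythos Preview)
Your argument is correct and follows the same outline as the paper's proof: invoke Proposition~\ref{prop:effectiveVance}, apply the effective Minkowski lemma (Proposition~\ref{prop:productminima} and Remark~\ref{rem:effectiveMinkowski}) to pass from the product of $A$-minima to $\lambda_1$, and then bound the cost of an exhaustive search through $\mathbb{L}_p$. Your version is more explicit than the paper's---you pin down $p$ as the least prime above $t^2\log t$ and count generating $(t{+}1)$-tuples in the $2t$-dimensional multiplicity space rather than the Grassmannian $\binom{2t}{t+1}_p$ itself---but this looser enumeration bound still lands on the stated exponent $4t^2\log t$ once combined with $\log p\sim 2\log t$, and the per-lattice $2^{O(t)}$ cost from Remark~\ref{rem:effectiveMinkowski} is indeed absorbed into the $o(1)$.
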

  \begin{proof}
    Given $\varepsilon$ and large enough $t$, it is easy to find a large enough prime $p$ so that the construction of Proposition \ref{prop:effectiveVance} applies. The corresponding family of lattices has $\vert \calC_{t+1}\vert$ elements in Proposition \ref{prop:effectiveVance}'s notation. They are generated by vectors with coefficients polynomial in $t$. The cost of computing their density or shortest vector as well as of computing their successive quaternionic minima (see remark \ref{rem:effectiveMinkowski}) is thus small compared to the cost of enumerating the family and we find by applying the effective version of \ref{prop:productminima} the desired lattice. 
  \end{proof}
  \section*{Acknowledgements}
  We would like to thank Gauthier Leterrier, Martin Stoller and Maryna Viazovska for helpful conversations on the topics of this paper. We thank Matthew De Courcy-Ireland for insightful comments on a previous version of this manuscript. 
  
 Nihar Gargava got funded by the Swiss National Science Foundation (SNSF), Project funding (Div. I-III), ``Optimal configurations in multidimensional spaces'', 184927.

\newpage
\begin{appendix}
  \section{Gram-Schmidt process for real semisimple algebras with positive involutions}
  \label{se:gs_process}

Let $A$ be a real semisimple algebra
and let $(\ )^{*}: A\rightarrow A$ be a positive involution. 

\subsection{Orthogonality in $A^k$}

Let $k \ge 1$. Fix a positive definite symmetric element $a \in A$. Let us define
\begin{align*}
  \langle \ , \ \rangle_{A} : A^{k} \times A^{k} & \mapsto A \\
  \langle x,y\rangle_{A} & = \sum_{i=1}^{k} x_{i} a y_{i}^*.
\end{align*}

  This form is $\mathbb{R}$-bilinear but not necessarily $A$-bilinear, since $A$ may not be commutative. We can however define a real positive definite symmetric bilinear form on $A^{k}$ given by 
  \begin{align*}
    \langle x,y\rangle_{\mathbb{R}} = \T\left( \langle x, y\rangle_{A}\right).
  \end{align*}
  
  We assume that $A^{k}$ carries this real inner product.

\begin{lemma}
  The following properties are satisfied by $\langle \ , \ \rangle_{A}$.
    \begin{enumerate}
  \item For $x,y \in A^{k}$, we have 
  \begin{align*}
    \langle x,y\rangle^{*}_{A} = \langle y,x\rangle_{A}.
  \end{align*}

    \item For $x \in A^{k} \setminus \{ 0\}$, $\langle x,x\rangle_{A}$ is a symmetric and positive definite in $A$, and hence is a unit in $A$.
    \item For $x,y \in A^{k}$ and $\alpha \in A$, we have \begin{align*}
	\langle \alpha x,y\rangle_{A} & = \alpha\langle x,y\rangle_{A},\\
	\langle  x, \alpha y \rangle_{A} & = \langle x ,y\rangle_{A} \alpha^{*}.
      \end{align*}
    \item Suppose $x \in A^{k}\setminus \{ 0\}$. Then there exists some $b \in A$ such that $\langle bx,bx\rangle = 1_{A}$.
  \end{enumerate}
\end{lemma}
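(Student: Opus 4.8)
The plan is to read off properties (1) and (3) directly from the definition $\langle x,y\rangle_A=\sum_{i=1}^k x_i a y_i^*$ using only the axioms of a positive involution, to deduce (2) by combining symmetry of $\langle x,x\rangle_A$ (a special case of (1)) with a sum-of-squares rewriting of $a$, and to obtain (4) as an immediate corollary of (2). For (1), apply $*$ to each summand: $(x_i a y_i^*)^*=(y_i^*)^* a^* x_i^*=y_i a x_i^*$ by anti-multiplicativity of $*$, involutivity, and $a^*=a$; summing over $i$ gives $\langle x,y\rangle_A^*=\langle y,x\rangle_A$. For (3), factor $\alpha$ out on the left of $\sum_i (\alpha x_i) a y_i^*$, and on the right use $(\alpha y_i)^*=y_i^*\alpha^*$ to pull $\alpha^*$ out of $\sum_i x_i a y_i^*\alpha^*$. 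These are the routine verifications and I will not dwell on them.

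The substance is in (2). Symmetry of $c:=\langle x,x\rangle_A$ is the case $y=x$ of (1), so $c^*=c$. For positive-definiteness, write $a=b^*b$ with $b$ the symmetric positive square root of $a$, which exists and is invertible by the structure theory of real semisimple algebras with positive involution (the spectral theorem applied on each simple factor $M_{n_i}(D_i)$, $D_i\in\{\IR,\IC,\mathbb{H}\}$). Then $c=\sum_i (bx_i^*)^*(bx_i^*)$, so for every $z\in A$
\[ \T(z^*cz)=\sum_{i=1}^k\T\bigl((bx_i^*z)^*(bx_i^*z)\bigr)\ge 0, \]
by positivity of the trace form, with equality forcing $bx_i^*z=0$ and hence $x_i^*z=0$ for all $i$. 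To conclude $z=0$ one uses that $x$ generates a \emph{free} rank-one $A$-submodule of $A^k$, equivalently $\bigcap_i\ker(x_i^*)=0$; this is exactly the situation in which the lemma is applied in the Gram--Schmidt construction used to prove Proposition~\ref{prop:productminima}, where every vector produced is nonzero and free over $A$. Thus $c$ is symmetric positive definite, and such a $c$ is automatically a unit: $cz=0$ (resp.\ $zc=0$) would give $\T(z^*cz)=0$ (resp.\ $\T(zcz^*)=0$) and hence $z=0$, so left and right multiplication by $c$ are injective and therefore bijective on the finite-dimensional space $A$.

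Finally, (4) is immediate from (2): take $b:=c^{-1/2}$, the symmetric invertible positive square root of $c^{-1}$; then by (3),
\[ \langle bx,bx\rangle_A=b\,\langle x,x\rangle_A\,b^*=b\,c\,b=c^{-1/2}\,c\,c^{-1/2}=1_A. \]
The main obstacle is the passage from positive \emph{semi}definiteness to positive \emph{definiteness}, i.e.\ to invertibility, in (2): the factorization $a=b^*b$ makes semidefiniteness automatic, but definiteness genuinely needs the freeness of $x$ — it fails for an arbitrary nonzero $x$ as soon as $A$ is not a division algebra, as $A=\IR\times\IR$ already shows — so the clean write-up records (2), and hence (4), for free $x$, which is all that the Gram--Schmidt argument requires.
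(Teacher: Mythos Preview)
Your approach matches the paper's: (1) and (3) are routine, (2) uses a factorization $a=b^*b$ from the structure theory of real semisimple algebras with positive involution, and (4) follows from (2) by extracting a square root of $\langle x,x\rangle_A^{-1}$ --- the paper cites \cite[Corollary~46]{gargava2021lattice} for precisely this step, while you produce $b=c^{-1/2}$ directly.

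You have moreover spotted a genuine imprecision that the paper's one-line proof (``trivial verifications'') elides: for a general real \emph{semisimple} $A$ the conclusion of (2) fails for an arbitrary nonzero $x$, as your example $A=\IR\times\IR$, $k=1$, $x=(1,0)$ shows ($\langle x,x\rangle_A=(1,0)$ is a zero-divisor). The correct hypothesis is that $x$ generate a free rank-one left $A$-module, equivalently $\alpha x=0\Rightarrow\alpha=0$; your reduction to $x_i^*z=0$ for all $i$, hence $z^*x=0$, hence $z=0$, is then valid. This hypothesis is indeed satisfied at every invocation in the Gram--Schmidt procedure and in Proposition~\ref{prop:productminima}, so the paper's applications are unaffected; but your write-up is the more careful one.
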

\begin{proof}
  All of them are trivial verifications. For the last one, we must find a $b \in A$ such that $\langle x,x\rangle_{A}^{-1} = b^{*}b$. See \cite[Corollary 46]{gargava2021lattice}.
  
\end{proof}

\begin{definition}
  We say two vectors in $A^{k}$ are orthogonal if the above given product between them is $0$. 
  We call a set of vectors $\{ x_1,x_2,\cdots,x_m\} \in A^{k}$ orthonormal if $\langle x_i,x_{j}\rangle_{A} = \delta_{ij} 1_{A}$. 
\end{definition}

  Note that if $\langle x,y\rangle= 0$, then $\langle \alpha_1x,\alpha_2y\rangle=0$ for each $\alpha_1,\alpha_2 \in A$ and $x,y \in A^{k}$.

\begin{prop}
  Suppose $x_1,x_2,\cdots,x_m \in A^{k}$ is an orthonormal set of vectors.
  Then $m \le k$ and $x_1,x_2,\cdots,x_m$ are free under the left action of $A$. If $m=k$, then the vectors $x_1,\cdots,x_k$ freely generate $A^{k}$ as a left $A$-module.
\end{prop}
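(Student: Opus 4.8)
The plan is to mimic the classical Gram--Schmidt argument over a field, being careful about the non-commutativity of $A$ and using the left-module structure throughout. First I would prove $m\le k$: the orthonormal set $\{x_1,\dots,x_m\}$ spans a left $A$-submodule $M=\sum_i Ax_i$ of $A^k$, and since each $x_i$ satisfies $\langle x_i,x_i\rangle_A=1_A$ the assignment $x_i\mapsto e_i$ extends to an $A$-linear isometry from $M$ onto $A^m$ equipped with the standard form $\langle y,z\rangle_A=\sum y_j z_j^*$; comparing real dimensions ($\dim_\IR M = m\cdot\dim_\IR A \le \dim_\IR A^k = k\cdot\dim_\IR A$) forces $m\le k$. (Alternatively this drops out of the freeness claim below.)

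Next I would prove freeness under the left $A$-action. Suppose $\sum_{i=1}^m \alpha_i x_i = 0$ with $\alpha_i\in A$. Pairing with $x_j$ on the right and using $\mathbb{R}$-bilinearity together with properties (3) and the orthonormality relation $\langle x_i,x_j\rangle_A=\delta_{ij}1_A$, we get
\begin{align*}
0=\Big\langle \sum_{i=1}^m \alpha_i x_i,\; x_j\Big\rangle_A=\sum_{i=1}^m \alpha_i\langle x_i,x_j\rangle_A=\alpha_j\cdot 1_A=\alpha_j,
\end{align*}
so all $\alpha_j=0$. Here I should be slightly careful: the form $\langle\ ,\ \rangle_A$ is only $\IR$-bilinear, but property (3) gives exactly $\langle \alpha x,y\rangle_A=\alpha\langle x,y\rangle_A$, which is all that is used, so the computation is legitimate.

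Finally, for the case $m=k$: freeness of $k$ elements in the free left $A$-module $A^k$ of rank $k$ already forces them to be a basis. The cleanest way to see this is to consider the $A$-linear endomorphism $T:A^k\to A^k$ sending the standard basis vector $e_i$ to $x_i$; freeness says $T$ is injective, and since $A^k$ is a finitely generated module over the (semisimple, hence Noetherian and even Artinian) ring $A$, an injective endomorphism of $A^k$ is automatically surjective, so $T$ is an isomorphism and $\{x_1,\dots,x_k\}$ freely generates $A^k$. Alternatively, one can argue by real dimension: the left submodule $\sum_i Ax_i$ is a free left $A$-module of rank $k$ by the freeness just established, hence has real dimension $k\cdot\dim_\IR A=\dim_\IR A^k$, so it equals $A^k$.

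I expect the only genuine subtlety---the ``main obstacle''---to be bookkeeping around non-commutativity: making sure that in the freeness computation the scalars $\alpha_i$ are pulled out on the correct (left) side, which is precisely what property (3) of the preceding lemma guarantees, and invoking the right finiteness property of $A$ (Artinian, as a finite-dimensional $\IR$-algebra) to upgrade injectivity to surjectivity in the $m=k$ case. Everything else is a routine transcription of the classical argument.
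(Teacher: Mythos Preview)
Your proof is correct and essentially the same as the paper's: the freeness computation via pairing against $x_j$ is identical, and your real-dimension comparison is exactly how the paper handles both $m\le k$ and the $m=k$ case. The paper simply does the freeness step first and then the $\IR$-dimension count, so your Artinian-endomorphism alternative is correct but unnecessary.
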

\begin{proof}
  Observe that if $a_1,a_2,\cdots, a_m \in A$ are such that 
  \begin{align*}
    & a_1 x_1 + a_2 x_2+ \dots + a_m x_m = 0 ,
  \end{align*}
  then we can just evaluate 
  \begin{align*}
    \langle a_1 x_1 + a_2 x_2+ \dots + a_m x_m  ,x_i \rangle  = a_i \langle x_i, x_i\rangle = a_i 1_{A} = 0.
  \end{align*}
  Hence, we have that each $a_i=0$. Using this, we get that 
  \begin{align*}
    A^{m} & \rightarrow A^{k} \\
    (a_1,a_2,\ldots,a_m) & \mapsto a_1x_1 + \cdots + a_m x_m.
  \end{align*}
  is an injective $\mathbb{R}$-linear map. Hence, for dimensional constraints, $m \le k$ and $m=k$ will imply that it is an isomorphism.
\end{proof}

\

\begin{remark}
  If we were to define
\begin{align*}
  \langle \ , \ \rangle_{A} : A^{k} \times A^{k} & \mapsto A \\
  \langle x,y\rangle & = \sum_{i=1}^{k} x_{i}^{*} y_{i},
\end{align*}
we would reach the same proposition above, but for right actions instead of left.
\end{remark}

\begin{corollary}
  Suppose $x_1,x_2,\cdots,x_k \in A^{k}$ is an orthonormal set of vectors. Then for any $v = a_1 x_1+ a_2 x_2 + \dots + a_k x_k$ for $\{ a_{i}\}_{i=1}^{k} \subseteq A$, we have 
  \begin{align*}
    \langle v,v\rangle_{\mathbb{R}} = \T(\langle v,v\rangle_{A}) = \sum_{i=1}^{k} \T(a_{i}^{*}a a_{i}).
  \end{align*}
\end{corollary}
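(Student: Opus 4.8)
The identity is the $A^k$-analogue of the Pythagorean theorem for an orthonormal system, and the plan is a direct bilinear expansion. First I would write $v=\sum_{i=1}^k a_i x_i$ and expand $\langle v,v\rangle_A$ using that $\langle\ ,\ \rangle_A$ is additive in each argument (a consequence of its $\IR$-bilinearity), turning it into the double sum $\sum_{i,j}\langle a_i x_i,a_j x_j\rangle_A$. Then I would apply the two scaling rules from the preceding lemma, $\langle\alpha x,y\rangle_A=\alpha\langle x,y\rangle_A$ and $\langle x,\alpha y\rangle_A=\langle x,y\rangle_A\alpha^{*}$, to each term, obtaining $\langle a_i x_i,a_j x_j\rangle_A=a_i\langle x_i,x_j\rangle_A a_j^{*}$; note these rules place $a_i$ on the left and $a_j^{*}$ on the right, which is the only place the noncommutativity of $A$ and the order-reversal of the involution enter. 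The orthonormality hypothesis $\langle x_i,x_j\rangle_A=\delta_{ij}1_A$ then annihilates every off-diagonal term and collapses the expression to a single sum over $i$.

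Finally I would apply the reduced trace, using its cyclic invariance $\T(xy)=\T(yx)$ together with the symmetry $a^{*}=a$ to bring each surviving term into the stated shape $\T(a_i^{*} a\, a_i)$; the equality $\langle v,v\rangle_{\IR}=\T(\langle v,v\rangle_A)$ is simply the definition of the real inner product attached to $\langle\ ,\ \rangle_A$.

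There is no genuine obstacle here, since the claim is pure bookkeeping; the only care required is to keep the coefficients on their correct sides throughout and to use only additivity (not $A$-bilinearity) of $\langle\ ,\ \rangle_A$ in the initial expansion. The point of the corollary is that it is exactly the Parseval-type identity needed in the Gram--Schmidt step of Proposition \ref{prop:productminima}, where it guarantees that the Euclidean length of a lattice vector splits as the sum of the (weighted) lengths of its coordinates relative to the orthonormal frame $x_1,\dots,x_k$.
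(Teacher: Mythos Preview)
Your expansion is exactly the intended argument (the paper states this corollary without proof). Using $\IR$-biadditivity to write $\langle v,v\rangle_A=\sum_{i,j}\langle a_ix_i,a_jx_j\rangle_A$, then the scaling rules from the preceding lemma to reach $\sum_{i,j}a_i\langle x_i,x_j\rangle_A\,a_j^{*}$, and finally orthonormality to collapse the sum to $\sum_i a_i a_i^{*}$, is correct and complete.

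The one flaw is your last cosmetic step. Once orthonormality has replaced $\langle x_i,x_i\rangle_A$ by $1_A$, the fixed positive-definite element $a$ has been absorbed and is no longer present in the expression. Taking the trace gives $\sum_i\T(a_ia_i^{*})$, and cyclic invariance yields $\sum_i\T(a_i^{*}a_i)$; neither cyclic invariance nor the symmetry $a^{*}=a$ can re-insert a factor of $a$ to produce $\T(a_i^{*}\,a\,a_i)$. These quantities genuinely differ in general: take $A=M_2(\IR)$ with transpose as involution, $a=\mathrm{diag}(2,1)$, and $a_1=e_{12}$; then $\T(a_1a_1^{*})=1$ while $\T(a_1^{*}a\,a_1)=2$. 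So the rearrangement you invoke does not hold, and what your computation actually establishes is
\[
\langle v,v\rangle_{\IR}=\sum_{i=1}^{k}\T(a_i^{*}a_i).
\]
The extra $a$ in the printed formula appears to be a misprint, an artifact of the unfortunate notational clash between the fixed element $a$ and the coefficients $a_i$. For the use made of this in Proposition~\ref{prop:productminima} the discrepancy is harmless, since all that is needed there is that $\|v\|^2$ splits as a sum of non-negative contributions from the orthonormal coordinates; but you should record the identity you have actually proved rather than hand-wave toward one that is false.
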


\subsection{Gram-Schmidt algorithm}

The following algorithm is an analogue of Gram-Schmidt.

Suppose $v_1,v_2,\cdots,v_k \in A^{k}$ are some vectors that freely generate $A^{k}$ as a left $A$-module. We claim that using these vectors, it is possible to create a set of orthonormalized basis vectors $x_1,x_2,\cdots,x_k$.

First, we do the following. Define for $u,v \in A^{k}$,
\begin{align*}
  \pr(u, v)= 
  \begin{cases}
    {\langle u,v\rangle_{A}}\langle u,u\rangle_{A}^{-1} u & \text{ if }u \neq 0 \\
    0 & \text{if } u = 0
  \end{cases}.
\end{align*}
This has the property that $\langle \pr(u,v),u\rangle_{A} = \langle u,v\rangle_{A}$.

Generate vectors $x_1',x_2',\cdots,x_k'$ as follows.
  \begin{align*}
    x_1'&  = {v_1}, \\ 
    x_{2}'& = v_{2} - \pr(x_1',v_2) \\
    x_{3}'& = v_{3} - \pr(x_1',v_3) - \pr(x_2',v_3)\\
    x_{4}'& = v_{4} - \pr(x_1',v_4) - \pr(x_2',v_4) - \pr(x_3',v_4)\\
    \vdots\\
  \end{align*}
  
  We can prove that $\langle x_i',x_j'\rangle =0$ for $i > j$ by first induction via ordering $(i,j)$ as 
  $(2,1),(3,1),\cdots$, $(k,1),(3,2),(4,2),\cdots$, $(k,2),(4,3),\cdots$. Now choose $b_i$ such that $x_i = b_i x'_i$ has $\langle x_i,x_i\rangle = 1_{A}$. Hence, we are done.
  
\begin{definition}
Given a real semisimple algebra $A$ with a positive involution $(\ )^{*}$, we call the above method of generating $x_1,x_2,\cdots,x_m$ from vectors $v_1,v_2,\cdots,v_m$ the Gram-Schmidt algorithm. If $\{ v_i\}_{i=1}^{m}$ are free under left $A$ multiplication, then so are $\{ x_i\}_{i=1}^{m}$.
\end{definition}
  
\end{appendix}

\nocite{}
\bibliography{divrings.bib}

\end{document}